\newcommand\reallywidehat[1]{%
\savestack{\tmpbox}{\stretchto{%
  \scaleto{%
    \scalerel*[\widthof{\ensuremath{#1}}]{\kern-.6pt\bigwedge\kern-.6pt}%
    {\rule[-\textheight/2]{1ex}{\textheight}}
  }{\textheight}%
}{0.5ex}}%
\stackon[1pt]{#1}{\tmpbox}%
}
\newcommand\cyr{%
 \renewcommand\rmdefault{wncyr}%
 \renewcommand\sfdefault{wncyss}%
 \renewcommand\encodingdefault{OT2}%
\normalfont\selectfont} \DeclareTextFontCommand{\textcyr}{\cyr}
\newtheorem{theorem}{Theorem}
\newtheorem{proposition}[theorem]{Proposition}
\newtheorem{remark}[theorem]{Remark}
\def\Z{\mathbb Z}
\def\Q{\mathbb Q}
\def\R{\mathbb R}
\def\C{\mathbb C}
\def\F{\mathbb F}
\def\Z{\mathbb Z}
\def\Q{\mathbb Q}
\def\R{\mathbb R}
\def\C{\mathbb C}
\def\F{\mathbb F}
\def\Ker{\operatorname{Ker}}
\def\Im{\operatorname{Im}}
\def\Re{\operatorname{Re}}
\def\deg{\operatorname{deg}}
\def\det{\operatorname{det}}
\def\diag{\operatorname{diag}}
\def\tr{\operatorname{tr}}
\def\End{\operatorname{End}}
\def\Aut{\operatorname{Aut}}
\def\Gal{\operatorname{Gal}}
\def\Frob{\operatorname{Frob}}
\def\mod{\operatorname{mod}}
\def\disc{\operatorname{disc}}
\def\car{\operatorname{char}}
\def\exp{\operatorname{exp}}
\def\gcd{\operatorname{gcd}}
\def\GL{\operatorname{GL}}
\def\GSp{\operatorname{GSp}}
\def\USp{\operatorname{USp}}
\def\PGSp{\operatorname{PGSp}}
\def\O{\operatorname{O}}
\def\o{\operatorname{o}}
\def\log{\operatorname{log}}
\def\ds{\displaystyle}
\def\Ker{\operatorname{Ker}}
\def\Im{\operatorname{Im}}
\def\Re{\operatorname{Re}}
\def\deg{\operatorname{deg}}
\def\det{\operatorname{det}}
\def\diag{\operatorname{diag}}
\def\tr{\operatorname{tr}}
\def\End{\operatorname{End}}
\def\Aut{\operatorname{Aut}}
\def\Gal{\operatorname{Gal}}
\def\Frob{\operatorname{Frob}}
\def\mod{\operatorname{mod}}
\def\disc{\operatorname{disc}}
\def\car{\operatorname{char}}
\def\exp{\operatorname{exp}}
\def\gcd{\operatorname{gcd}}
\def\GL{\operatorname{GL}}
\def\GSp{\operatorname{GSp}}
\def\USp{\operatorname{USp}}
\def\O{\operatorname{O}}
\def\o{\operatorname{o}}
\def\log{\operatorname{log}}
\def\ds{\displaystyle}
\begin{document}

\title{
Bounds for the distribution of the Frobenius traces associated to a generic abelian variety 
}



\author{
Alina Carmen Cojocaru and Tian Wang}
\address[Alina Carmen  Cojocaru]{
\begin{itemize}
\item[-]
Department of Mathematics, Statistics and Computer Science, University of Illinois at Chicago, 851 S Morgan St, 322
SEO, Chicago, 60607, IL, USA;
\item[-]
Institute of Mathematics  ``Simion Stoilow'' of the Romanian Academy, 21 Calea Grivitei St, Bucharest, 010702,
Sector 1, Romania
\end{itemize}
} \email[Alina Carmen  Cojocaru]{cojocaru@uic.edu}

\address[Tian Wang]{
\begin{itemize}
\item[-]
Department of Mathematics \&
Statistic, Concordia University, Montreal, CA;
\end{itemize}
} \email[Tian Wang]{tian.wang@concordia.ca}

\renewcommand{\thefootnote}{\fnsymbol{footnote}}
\footnotetext{\emph{Key words and phrases:} 
abelian varieties, endomorphism rings, Galois representations,
distribution of primes, density theorems
 }
\renewcommand{\thefootnote}{\arabic{footnote}}

\renewcommand{\thefootnote}{\fnsymbol{footnote}}
\footnotetext{\emph{2010 Mathematics Subject Classification:} 11G05, 11G20, 11N05 (Primary), 11N36, 11N37, 11N56 (Secondary)}
\renewcommand{\thefootnote}{\arabic{footnote}}

\thanks{A.C.C. was partially supported  by a Collaboration Grant for Mathematicians from the Simons Foundation  
under Award No. 709008. }

\begin{abstract}
Let $A$ be an abelian variety defined over $\Q$ and of dimension $g$. 
Assume  that, for each sufficiently large prime $\ell$,
$A$ has a surjective residual modulo $\ell$ Galois representation.
For  $t\in \Z$ and  $x>0$, 
denote by $\pi_A(x, t)$ the number of primes $p \leq x$ 
 for which the Frobenius trace $a_{1, p}(A)$  associated to $A (\mod p)$  equals $t$.
Assuming the Generalized Riemann Hypothesis for Dedekind zeta functions (GRH), 
we obtain that
$\pi_A(x, 0) \ll_A x^{1 - \frac{1}{2g^2+g+1}}/(\log x)^{1 - \frac{2}{2g^2+g+1}}$
and 
$\pi_A(x, t) \ll_A x^{1 - \frac{1}{2g^2+g+2}}/(\log x)^{1 - \frac{2}{2g^2+g+2}}$ if $t \neq 0$,
and
 deduce that
almost all primes $p$  satisfy
$|a_{1, p}(A)| > p^{\frac{1}{2 g^2 + g + 1}}/ (\log p)^{\frac{2}{2g^2+g+1}+\varepsilon}$ for any  $\varepsilon>0$.
Assuming, in addition to GRH, 
Artin's Holomorphy Conjecture and a Pair Correlation Conjecture for Artin L-functions,
we obtain that
$\pi_A(x, 0) \ll_A x^{1 - \frac{1}{g+1}}/(\log x)^{1 - \frac{4}{g+1}}$
and 
$\pi_A(x, t) \ll_A x^{1 - \frac{1}{g+2}}/(\log x)^{1 - \frac{4}{g+2}}$ if $t \neq 0$,
and deduce that
 almost all primes $p$  satisfy
$|a_{1, p}(A)|>  p^{\frac{1}{g + 2} - \varepsilon }$ for any  $\varepsilon>0$.
\end{abstract}

\maketitle

\section{Introduction}

In 1976, Lang and Trotter  \cite{LaTr76} proposed a prime distribution problem, which remains open,
 in the setting of
elliptic curves.
Known as the Lang-Trotter Conjecture on Frobenius traces,
the problem 
 aims to understand the reduction type of a given elliptic curve defined over the field of rational numbers.
It may
be described in terms of the distribution of Frobenius elements in 
an infinite family of Galois number fields derived from the given elliptic curve. This description 
may
then  be generalized
 to prime distribution problems related to compatible systems of Galois representations, 
 such as those associated to modular forms and those associated  to higher dimensional abelian varieties. For example, such generalizations were proposed in 
\cite{AkPa19},
\cite{BaGo97}, 
\cite{ChJoSe20},
\cite{CoDaSiSt17}, 
\cite{Ka09},
and
\cite{Mu99}. 
The goal of our paper is to prove results related to the generalization of the Lang-Trotter Conjecture on Frobenius traces
formulated in \cite{CoDaSiSt17} in the setting of generic abelian varieties defined over $\Q$, as explained below.

Let $A$ be an abelian variety defined over $\Q$, of dimension $g \geq 1$, and of conductor $N_A$.
For a rational prime $\ell$, consider
the group
representation obtained from the action of the absolute Galois group $\Gal(\overline{\Q}/\Q)$
on the $\ell$-adic Tate module $T_{\ell}(A)$ of $A$.
Recalling that there is an isomorphism of $\Z_{\ell}$-modules 
$
T_{\ell}(A) \simeq_{\Z_{\ell}} (\Z_{\ell})^{2g}
$
and  fixing a $\Z_{\ell}$-basis of $T_{\ell}(A)$, 
we obtain a Galois representation
\begin{equation*}\label{ell-adic-repres}
\rho_{A, \ell}: \Gal(\overline{\Q}/\Q) \longrightarrow \GL_{2g}(\Z_{\ell}),
\end{equation*}
which we call the $\ell$-adic Galois representation  of $A$. 
Taking the projection of  $\GL_{2g}(\Z_{\ell})$ onto $\GL_{2g}(\Z/\ell \Z)$, we obtain 
a Galois representation
\begin{equation*}\label{ell-adic-repres}
\overline{\rho}_{A, \ell}: \Gal(\overline{\Q}/\Q) \longrightarrow \GL_{2g}(\Z/\ell \Z),
\end{equation*}
which we call
the residual modulo $\ell$ Galois representation of $A$.

For a rational prime $p \nmid \ell N_A$, we denote by
$
a_{1, p}(A)
$
the trace of $\rho_{A, \ell}(\Frob_p)$, where $\Frob_p \in \Gal(\overline{\Q}/\Q)$ is a fixed Frobenius element at $p$. 
It is known that $a_{1, p}(A)$ is an integer which does not depend on $\ell$ and which satisfies the
Hasse-Weil bound
$|a_{1, p}(A)| < 2 g p^{\frac{1}{2}}$.

In 
\cite{LaTr76},
Lang and Trotter  considered the case 
of an abelian variety over $\Q$ of dimension 1 and having a trivial $\overline{\Q}$-endomorphism ring,
and
proposed the investigation  of the asymptotic behaviour of the function counting primes $p$ for which $a_{1, p}(A)$ equals a given integer.
This problem may  also be considered when $g >1$, as follows.

Let $t \in \Z$ and $x \in \R$ with $x > 0$, and define
\begin{equation}\label{pi-Frob-trace}
\pi_{A}(x, t)
:=
\#\left\{
p \leq x: p \nmid N_A, a_{1, p}(A) = t
\right\}.
\end{equation}
Assume, for simplicity, that $A$ is principally polarized, 
in which case
 the adelic Galois representation  $\rho_A$,  
 defined as the product of the $\ell$-adic Galois representations $\rho_{A, \ell}$,
 has its image contained in $\GSp_{2g}(\hat{\Z})$.
If 
 $\rho_{A}$
has open image in $\GSp_{2g}(\hat{\Z})$ 
and
if the normalized traces $\frac{a_{1, p}(A)}{\sqrt{p}}$ are equidistributed on the interval $[-2g, 2g]$
with respect to the projection by the trace map of the normalized Haar measure on the unitary symplectic
group $\USp_{2g}$, then
Cojocaru, Davis, Silverberg and Stange  \cite{CoDaSiSt17}  conjectured that,
provided $t \neq 0$ when $g \geq 2$,
there exists an explicit constant $c(A, t) \geq 0$ such that,
 as $x \rightarrow \infty$,
\begin{equation}\label{lang-trotter-av}
\pi_A(x, t) \sim c(A, t) \frac{\sqrt{x}}{\log x}.
\end{equation}
In the case of an abelian variety of dimension
$g = 1$ 
and having a trivial $\overline{\Q}$-endomorphism ring,
both the open image assumption and the equidistribution assumption are known to hold
and 
the asymptotic formula
 (\ref{lang-trotter-av}) coincides with the Lang-Trotter Conjecture on Frobenius traces proposed in 
\cite{LaTr76}.
In the case of an abelian variety of dimension
$g \geq 2$, 
the open image assumption is known to hold 
if $g =2, 6,$ or odd, and the $\overline{\Q}$-endomorphism ring of $A$ is trivial
(see \cite{Se85}, \cite{Se86});
in contrast,
the equidistribution assumption
-- a special case of a broad conjecture that generalizes the Sato-Tate Conjecture  (see \cite{Se94}  and \cite{KaSa99}) --
 remains open.

The growth of the function $\pi_A(x, t)$ is intimately connected to
the distribution of Frobenius elements in extensions of $\Q$ derived from $A$.
For example, the Chebotarev Density Theorem,
applied in extensions of $\Q$ associated to the division fields of $A$, 
plays a fundamental role in the study 
of $\pi_A(x, t)$. 
Since the best effective versions of this theorem
depend on the assumption of
a Generalized Riemann Hypothesis for Dedekind zeta functions (denoted GRH, for short),
it is natural to investigate the growth of $\pi_A(x, t)$ under such an assumption.

Currently,
the best results about the growth  of $\pi_A(x, t)$, obtained under some GRH,
are as follows:
provided that $g = 1$ and  $A$ has a trivial $\overline{\Q}$-endomorphism ring,
it was proven in \cite{MuMuSa88} and \cite{Zy15} that
\begin{equation}\label{ec-GRH}
\pi_A(x, t) 
\ll_{A}
\left\{
\begin{array}{cl}
\frac{x^{1 - \frac{1}{5}}}{(\log x)^{1 -\frac{2}{5}}}  & \text{if  $t \neq 0$,}
\\
\\
\frac{x^{1 - \frac{1}{4}}}{ (\log x)^{1 - \frac{1}{2}} } & \text{if  $t =0$};
\end{array}
\right.
\end{equation}
provided that
$g \geq 2$ and
 $\rho_{A}$
has open image in $\GSp_{2g}(\hat{\Z})$, 
it was proven in \cite{CoDaSiSt17} that,
for any $\varepsilon > 0$,
 \begin{equation}\label{av-not-zero}
\pi_A(x, t) 
\ll_{A, \varepsilon}
\left\{
\begin{array}{cl}
x^{1 - \frac{1}{2 (2g^2 - g + 3)} + \varepsilon}   & \text{if $t \neq \pm 2g, 0$,}
\\
\\
x^{1 - \frac{1}{2 (2g^2 + g + 1)} + \varepsilon} & \text{if $t = \pm 2g$,}
\end{array}
\right.
\end{equation}
and
\begin{equation}\label{av-zero}
\pi_A(x, 0) 
\ll_{A, \varepsilon}
\left\{
\begin{array}{cl}
x^{1 - \frac{1}{16} + \varepsilon}   & \text{if $g = 2$,}
\\
\\
x^{1 - \frac{1}{2 (2g^2 - g + 1)} + \varepsilon} & \text{if $g \geq 3$.}
\end{array}
\right.
\end{equation}

 Our main goal in this paper is to improve upon the above results when $g \geq 2$.
 Specifically, under a GRH assumption, we prove the following upper bounds for $\pi_A(x, t)$.

\begin{theorem}\label{main-thm1}
Let $t \in \Z$
and
let $A$ be an abelian variety  defined over $\Q$ and
of dimension $g$.
Assume that, for any sufficiently large prime $\ell$,
the  image of the residual modulo $\ell$ Galois representation $\overline{\rho}_{A, \ell}$ of $A$
is isomorphic to $\GSp_{2g}(\Z/\ell \Z)$.
Assume GRH.
Then, for any sufficiently large $x$, we have
$$
\pi_A(x, t) 
\ll_{A}
\left\{
\begin{array}{cl}
\frac{x^{1 - \frac{1}{2g^2+g+2}}}{(\log x)^{1 - \frac{2}{2g^2+g+2}}}  & \text{if $t \neq 0$,}
\\
\\
\frac{x^{1 - \frac{1}{2g^2+g+1}}}{ (\log x)^{1 - \frac{2}{2g^2+g+1}} } & \text{if $t =0$.}
\end{array}
\right.
$$
\end{theorem}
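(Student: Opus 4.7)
The plan is to bound $\pi_A(x, t)$ via the effective Chebotarev Density Theorem applied to a single $\ell$-division field of $A$, using the refinements of Murty--Murty--Saradha and Zywina. Fix an auxiliary prime $\ell$, coprime to $N_A$ and large enough that the surjectivity hypothesis on $\overline{\rho}_{A,\ell}$ gives $\Gal(K_\ell/\Q) \cong \GSp_{2g}(\F_\ell)$, where $K_\ell := \Q(A[\ell])$. For any unramified prime $p$ with $a_{1,p}(A) = t$, the Hasse-Weil bound ensures that $\tr \overline{\rho}_{A,\ell}(\Frob_p) \equiv t \pmod{\ell}$ once $p$ is sufficiently large. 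Setting
\[ C_\ell(t) := \{ g \in \GSp_{2g}(\F_\ell) : \tr g \equiv t \pmod{\ell} \}, \]
which is a union of conjugacy classes, one therefore has $\pi_A(x, t) \leq \pi_{C_\ell(t)}(x, K_\ell/\Q) + O_A(1)$, where the right-hand side is the Chebotarev counting function for primes whose Frobenius lies in $C_\ell(t)$.

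Next, apply the GRH-conditional refined effective Chebotarev bound in the form
\[ \pi_{C_\ell(t)}(x, K_\ell/\Q) \ll \frac{|C_\ell(t)|}{|\GSp_{2g}(\F_\ell)|} \pi(x) + |C_\ell(t)|^{1/2}\, x^{1/2} \log(N_A \ell x), \]
whose crucial features are the $|C_\ell(t)|^{1/2}$ factor in the error (rather than $|C_\ell(t)|/|G_\ell|$) and the absence of a spurious $[K_\ell:\Q] \log x$ factor from the naive Lagarias--Odlyzko bound. This strengthened form is established by invoking Weil's explicit formula on the Artin L-functions of the irreducible constituents of the indicator character of $C_\ell(t)$, together with Serre's conductor estimate $\log |d_{K_\ell}| \ll \ell^{2g^2+g+1} \log(\ell N_A)$. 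The required group-theoretic ingredients are $|\GSp_{2g}(\F_\ell)| \sim \ell^{2g^2+g+1}$ and $|C_\ell(t)| \sim \ell^{2g^2+g}$ for $t \neq 0$, with one extra factor of $\ell^{-1}$ saved for $t = 0$; these counts follow either from direct conjugacy-class enumeration in $\GSp_{2g}(\F_\ell)$ or from standard character-sum identities.

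Finally, I would optimize over $\ell$ by balancing the main term $\sim x/(\ell \log x)$ against the error $\sim \ell^{g^2 + g/2}\, x^{1/2} \log(N_A \ell x)$. For $t \neq 0$ this yields $\ell \asymp x^{1/(2g^2+g+2)} (\log x)^{-c}$ for a suitable $c$, and substituting back produces the stated bound $\pi_A(x, t) \ll x^{1 - 1/(2g^2+g+2)}/(\log x)^{1 - 2/(2g^2+g+2)}$. The analogous optimization for $t = 0$ using the improved density produces the sharper exponent $1/(2g^2+g+1)$. The hardest part will be obtaining the refined Chebotarev bound in exactly the form displayed above --- in particular, tightening the logarithmic factors sufficiently to improve on the MMS bound by a further power of $\log x$, which will require a careful $L^2$-type estimate over the irreducible characters of $\GSp_{2g}(\F_\ell)$ together with exploiting the fact that the trace is a low-degree polynomial class function --- and, in parallel, isolating the group-theoretic constraint that makes $t = 0$ strictly more restrictive than $t \neq 0$, generalizing the supersingular-versus-ordinary dichotomy from the $g = 1$ case.
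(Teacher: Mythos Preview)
There is a genuine gap. The refined Chebotarev bound you invoke,
\[
\pi_{C_\ell(t)}(x, K_\ell/\Q) \ll \frac{|C_\ell(t)|}{|\GSp_{2g}(\F_\ell)|}\,\pi(x) + |C_\ell(t)|^{1/2}\, x^{1/2}\log(N_A \ell x),
\]
with the crucial $|C_\ell(t)|^{1/2}$ rather than $|C_\ell(t)|$ in the error, is the Murty--Murty--Saradha form, and it \emph{requires AHC} for the extension $L/K$ in question (see Theorem~\ref{chebotarev-upper-bound} and part~(ii) of Theorem~\ref{chebotarev-GRH-AHC-PCC} in the paper). Your sketch of its proof --- ``invoking Weil's explicit formula on the Artin L-functions of the irreducible constituents'' --- is precisely where AHC enters: the explicit formula needs each Artin $L$-function to be entire. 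For $K_\ell/\Q$ with Galois group $\GSp_{2g}(\F_\ell)$ this is not known, so under GRH alone you only have the Lagarias--Odlyzko bound with $|C_\ell(t)|$ in the error, which is far too weak.

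The paper's proof gets around this obstruction, and this is its main new idea. First, by an averaging argument over $\ell$ in a short interval (Proposition~\ref{max-lemma-A-generic}, itself an application of effective Chebotarev in the Weil-number fields $K_{A,p}$), one reduces to counting only those $p$ for which $\overline{\rho}_{A,\ell}(\Frob_p)$ is semisimple with all eigenvalues in $\F_\ell^\times$. Such a Frobenius is conjugate into the Borel $B_{2g}(\F_\ell)$, and one then passes to the abelian quotient $B_{2g}(\F_\ell)/U_{2g}(\F_\ell)\simeq T_{2g}(\F_\ell)$, where AHC holds by Artin reciprocity. Only now can the $|\cal{C}|^{1/2}$ bound be applied (Theorem~\ref{functorial-quotient-group}); the degree factor $[\Q(A[\ell])^{B_{2g}}:\Q]\asymp \ell^{g^2}$ and the count $\#\widehat{\cal{C}_B(\ell,t)}\le (\ell-1)^g$ combine to give exactly the $\ell^{g^2+g/2}$ you wrote down, but via this abelian detour rather than directly.

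Your explanation of the $t=0$ improvement is also incorrect: $|C_\ell(0)|$ is \emph{not} smaller by a factor of $\ell$ (indeed $|C_\ell(t)|/|\GSp_{2g}(\F_\ell)| = \ell^{-1}+O(\ell^{-3})$ for every $t$). The saving comes instead from the homogeneity of the condition $\tr = 0$, which allows one to enlarge the normal subgroup from $U_{2g}$ to $U'_{2g}$ (scalars times unipotents); this shrinks the image $\widehat{\cal{C}'_B(\ell,0)}$ to size $(\ell-1)^{g-1}$ and yields the extra factor of $\ell^{-1/2}$ in the error.
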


Note that the bounds of Theorem \ref{main-thm1}  
recover (\ref{ec-GRH}) when $g = 1$ and  largely improve upon (\ref{av-not-zero}) - (\ref{av-zero}) when  $g \geq 2$.
When $g = 2$, Theorem \ref{main-thm1}  also
matches a recent upper bound presented  in \cite{KuKuWe22} which, by \cite{BoCaGePi21}, 
 is applicable to a modular abelian surface defined over $\Q$ and having a trivial endomorphism ring.

We remark that the assumption made in Theorem \ref{main-thm1}
about the  image of the residual modulo $\ell$ Galois representation $\overline{\rho}_{A, \ell}$ of $A$
being isomorphic to $\GSp_{2g}(\Z/\ell \Z)$ is weaker than,
but implied by, the open image assumption of (\ref{lang-trotter-av}).
As mentioned earlier,
this latter assumption is known to hold for all abelian varieties over $\Q$ of dimension  $g = 1, 2, 6,$ or odd and having  a trivial $\overline{\Q}$-endomorphism ring.
Moreover, it is known to hold
for  any abelian $g$-fold 
that arises as  the Jacobian of  a hyperelliptic curve defined by 
$Y^2 = f(X)$ for some monic polynomial $f(X) \in \Z[X]$ of degree $\deg f \in \{2 g + 1, 2 g + 2\}$
for some $g \geq 2$
and having
  the property that
either
the Galois group of $f$ is the permutation group on $\deg f$ elements,
or 
there exists a rational prime $p$ for which $f(X) (\mod p)$ has $\deg f - 1$ distinct roots over an algebraic closure of $\F_p$,
one of which is a double root (see the results of Hall and Kowalski in \cite{Ha11} and of Zarhin in \cite{Za00}).
Consequently, the strongest assumption of Theorem \ref{main-thm1} is that of a GRH.

 Our secondary goal in this paper is to improve the bounds of Theorem \ref{main-thm1} under additional hypotheses.
 It is known that
 if, in addition to a GRH, one assumes further hypotheses on the Artin L-functions associated to a
 given Galois extension,
then one may improve  the error term in the
 effective version of the Chebotarev Density Theorem pertaining to that extension. 
 The impact of such improvements on the 
 growth of $\pi_A(x, t)$ motivated the bounds (\ref{ec-GRH}) of \cite{MuMuSa88} and \cite{Zy15} when $g = 1$, which, in turn, motivated the following results of Bella\"{i}che \cite{Be16} when $g \geq 2$:
assume that, for any sufficiently large prime $\ell$,
the  image of the $\ell$-adic Galois representation $\rho_{A, \ell}$ of $A$
is isomorphic to $\GSp_{2g}(\Z_\ell)$;
assume GRH;
assume  Artin's Holomorphy Conjecture for Artin L-functions (denoted AHC, for short);
then,
 for any $\varepsilon > 0$,
 \begin{equation}\label{av-Bellaiche}
\pi_A(x, t) 
\ll_{A, \varepsilon}
\left\{
\begin{array}{cl}
x^{1 - \frac{2}{2g^2 - g + 4} + \varepsilon}   & \text{if $t \neq 0$,}
\\
\\
x^{1 - \frac{2}{2g^2 + g + 3} + \varepsilon} & \text{if $t = 0$.}
\end{array}
\right.
\end{equation}
In \cite{MuMuWo18},
Murty, Murty, and Wong
improved the error term in the
 effective version of the Chebotarev Density Theorem even further 
by assuming, in addition to GRH and  AHC,
a Pair Correlation Conjecture for the Artin L-functions (denoted PCC, for short) associated to the given Galois extension.
As an application, they obtained the following 
 improvement to (\ref{ec-GRH}):
  assume that $g = 1$ and that $A$ has a trivial $\overline{\Q}$-endomorphism ring;
assume GRH;
assume   AHC and  PCC;
then
 \begin{equation}\label{ec-MuMuWo}
\pi_A(x, t) 
\ll_{A}
\left\{
\begin{array}{cl}
x^{1 - \frac{1}{3}} (\log x)^{\frac{1}{3}}   & \text{if $t \neq 0$,}
\\
\\
x^{1 - \frac{1}{2}} \log x & \text{if $t = 0$.}
\end{array}
\right.
\end{equation}

In this paper, we generalize (\ref{ec-MuMuWo}) to the case of an abelian variety of higher dimensions
and
obtain the following upper bounds for $\pi_A(x, t)$.

\begin{theorem}\label{main-thm2}
Let $t \in \Z$
and
let $A$ be an abelian variety defined over $\Q$ and
of dimension $g$.
Assume that, for any sufficiently large prime $\ell$,
the  image of the residual modulo $\ell$ Galois representation $\overline{\rho}_{A, \ell}$ of $A$
is isomorphic to $\GSp_{2g}(\Z/\ell \Z)$.
Assume  GRH, AHC, and PCC. 
Then, for any sufficiently large $x$, we have
$$
\pi_A(x, t) 
\ll_{A}
\left\{
\begin{array}{cl}
\frac{x^{1 - \frac{1}{g+2}}}{(\log x)^{1 - \frac{4}{g+2}}}  & \text{if $t \neq 0$,}
\\
\\
\frac{x^{1 - \frac{1}{g+1}}}{(\log x)^{1 - \frac{4}{g+1}}} & \text{if $t =0$.}
\end{array}
\right.
$$
\end{theorem}

By using  Theorems \ref{main-thm1} and \ref{main-thm2}, 
we obtain the following non-trivial 
lower bounds for 
$|a_{1, p}(A)|$ 
for almost all $p$.

\begin{theorem}\label{main-thm3}
Let $A$ be an abelian variety defined over $\Q$ and
of dimension $g$.
Assume that, for any sufficiently large prime $\ell$,
the  image of the residual modulo $\ell$ Galois representation $\overline{\rho}_{A, \ell}$ of $A$
is isomorphic to $\GSp_{2g}(\Z/\ell \Z)$.
\begin{enumerate}
\item[(i)]
Assume GRH.
Then,  for any $\varepsilon >0$, the inequality
\[
|a_{1, p}(A)| > \frac{ p^{\frac{1}{2 g^2 + g + 1}}  }{ (\log p)^{\frac{2}{2g^2+g+1}+\varepsilon}   }
\]
holds for a set of primes $p$ of density 1.
\item[(ii)]
Assume  GRH, AHC, and PCC. 
Then, for any $\varepsilon >0$, the inequality
\[
|a_{1, p}(A)| >p^{\frac{1}{g + 2}-\varepsilon}
\]
holds for a set of primes $p$ of density 1.
\end{enumerate}
\end{theorem}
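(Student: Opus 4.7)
The plan is to derive density one lower bounds on $|a_{1,p}(A)|$ from the upper bounds on $\pi_A(x, t)$ in Theorems \ref{main-thm1} and \ref{main-thm2}. For a fixed $\varepsilon > 0$, I would show that the set of primes $p \leq x$ failing the stated inequality has size $o(\pi(x))$, so that its density in the primes is zero. After discarding the primes $p \leq x^{1/2}$, which contribute $O(x^{1/2}/\log x) = o(\pi(x))$, and adjusting constants so that $(\log p)^\varepsilon$ may be replaced by $(\log x)^\varepsilon$ up to an absolute constant, the problem reduces to controlling $\#\{p \leq x : |a_{1,p}(A)| \leq T\}$ for an appropriate threshold $T = T(x)$.

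The basic decomposition is
\[
\#\{p \leq x : |a_{1,p}(A)| \leq T\} = \pi_A(x, 0) + \sum_{0 < |t| \leq T} \pi_A(x, t).
\]
The term $\pi_A(x, 0)$ is $o(\pi(x))$ by the $t = 0$ bound of Theorem \ref{main-thm1} (for part (i)) or Theorem \ref{main-thm2} (for part (ii)) for every $g \geq 1$, so the work lies in bounding the sum over $0 < |t| \leq T$.

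For part (ii), with $T = x^{1/(g+2) - \varepsilon}$, the uniform $t \neq 0$ bound of Theorem \ref{main-thm2} yields
\[
\sum_{0 < |t| \leq T} \pi_A(x, t) \ll_A T \cdot \frac{x^{1 - 1/(g+2)}}{(\log x)^{1 - 4/(g+2)}} \ll_A \frac{x^{1 - \varepsilon}}{(\log x)^{1 - 4/(g+2)}},
\]
which is $o(\pi(x))$, completing this part. For part (i), with $T = x^{1/(2g^2+g+1)}/(\log x)^\varepsilon$, a naive application of the pointwise bounds of Theorem \ref{main-thm1} is not sharp enough, because multiplying $T$ by the $t \neq 0$ bound $x^{1 - 1/(2g^2+g+2)}/(\log x)^{1 - 2/(2g^2+g+2)}$ gives a quantity of size $\gg x$. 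Instead, I would handle the range $|t| \leq T$ as a single target set and apply the effective GRH-based Chebotarev density theorem directly to the union of conjugacy classes in $\GSp_{2g}(\Z/\ell \Z)$ whose trace lies in $[-T, T] \pmod{\ell}$, for a carefully chosen auxiliary prime $\ell$; this union has relative size $\approx T/\ell$ in the group, and optimizing $\ell$ as in the proof of Theorem \ref{main-thm1} yields the stated exponent.

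The main obstacle is precisely this refinement in part (i): since $1/(2g^2+g+1) > 1/(2g^2+g+2)$, summing the pointwise bounds of Theorem \ref{main-thm1} cannot deliver the claimed exponent, and one must instead re-run the Chebotarev analysis underlying Theorem \ref{main-thm1} with the entire interval $[-T,T]$ replacing a single trace value, while still balancing the main and error terms from GRH.
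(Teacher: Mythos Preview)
Your proposal is correct and follows essentially the same approach as the paper. For part (ii) you sum the pointwise bound of Theorem~\ref{main-thm2} over $|t|\leq x^{1/(g+2)-\varepsilon}$, exactly as the paper does; for part (i) you correctly diagnose that pointwise summation of Theorem~\ref{main-thm1} is too lossy and that one must instead rerun the Chebotarev argument on the union of trace classes in $[-T,T]$, which is precisely what the paper does via Proposition~\ref{max-lemma-A-generic}(ii) and the sets $\cal{C}^{ss}(\ell,|t|\leq z)$, $\reallywidehat{\cal{C}_B(\ell,|t|\leq z)}$, optimizing $y$ and $z$ simultaneously.
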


The motivation for Theorem \ref{main-thm3} comes from a conjecture of Atkin and Serre  on a lower bound
for $\tau(p)$
for the Ramanujan $\tau$-function (see \cite{Se76}). 
To see this,
recall that,
when
$g = 1$, 
the integer $a_{1, p}(A)$ 
coincides with the $p$-th Fourier coefficient of the weight two normalized newform, of level $N_A$, associated to $A$ under the Shimura-Taniyama-Weil Conjecture. 
Using this point of view, 
Theorem \ref{main-thm3} relates to results on the growth of the $p$-th Fourier coefficients of newforms
pursued in \cite{MuMu84} and \cite{MuMuSa88},
which had been
motivated by the aforementioned conjecture of  Atkin and Serre.

Before concluding  
the introduction,
let us comment briefly on the proofs of our first two main results.
The proofs of Theorems \ref{main-thm1} - \ref{main-thm2} have their roots in  
the observation that 
if $a_{1, p}(A) = t$, then 
$a_{1, p}(A) \equiv t (\mod \ell)$ 
for any prime $\ell$,
and in 
the interpretation of $a_{1, p}(A) (\mod \ell)$ as the trace of the image of $\Frob_p$ under
the residual modulo $\ell$ Galois representation of $A$.
An application of some version of the Chebotarev  Density Theorem should then lead to some non-trivial upper bound
for $\pi_A(x, t)$. For example, when $g = 1$ and $A$ has a trivial $\overline{\Q}$-endomorphism ring,
by assuming GRH and  using the effective version of the Chebotarev Density Theorem of Lagarias and Odlyzko \cite{LaOd77},
one obtains the upper bound
$\pi_A(x, t) \ll_{A, \varepsilon} x^{1 - \frac{1}{8} + \varepsilon}$.
While this mod $\ell$ approach easily leads to a non-trivial upper bound for $\pi_A(x, t)$ when $g = 1$, 
refining it to improve upon this bound 
or generalizing it to obtain a non-trivial upper bound for $\pi_A(x, t)$ when $g \geq 2$ 
requires further tools and new ideas.

One strategy of refining the mod $\ell$ approach is to consider, simultaneously, $\mod \ell^k$ congruences, 
for some fixed prime $\ell$ and varying positive integers $k \geq 1$.
For example, in \cite{Se81},
Serre considered  the case $g = 1$ and $A$ has a trivial $\overline{\Q}$-endomorphism ring,
and related
the problem of estimating $\pi_A(x, t)$
 to a prime counting problem
in the setting of an infinite Galois extension of $\Q$
having the $\ell$-adic Lie group $\GL_2(\Z_{\ell})$ as its Galois group, 
where $\ell$
 is some sufficiently large prime defined in terms of $x$.
Then, he used  an $\ell$-adic version of the Chebotarev Density Theorem
 to estimate, from above, the emerging set of primes.
 In particular, by assuming GRH, Serre obtained the upper bound
 $\pi_A(x, t) \ll_{A, \varepsilon} x^{1 - \frac{1}{6} + \varepsilon}$.
 This $\ell$-adic approach was generalized to $g \geq 2$ 
 by Cojocaru, Davis, Silverberg, and Stange in \cite{CoDaSiSt17}, 
 who proved the upper bounds
 (\ref{av-not-zero}) and (\ref{av-zero}), under GRH.
 
Another strategy of refining the mod $\ell$ approach is 
to improve the error term in the effective version of the Chebotarev Density Theorem.
For example, 
 in  \cite{MuMuSa88},
by assuming, 
both GRH and AHC,
Murty, Murty, and Saradha 
proved a refined version of the effective Chebotarev Density Theorem of \cite{LaOd77}.
Considering the case $g = 1$ and $A$ has a trivial $\overline{\Q}$-endomorphism ring,
a direct application of this theorem
in the mod $\ell$ approach
 would lead to the upper bound
 $\pi_A(x, t) \ll_{A, \varepsilon} x^{1 - \frac{1}{5} + \varepsilon}$,
 under GRH and AHC.
To circumvent the assumption
of
 AHC
 and still obtain this upper bound, 
Murty, Murty, and Saradha
related
 the problem of estimating $\pi_A(x, t)$
  to a prime counting problem
 in the setting of a particular subextension $L/K$ of number fields,
 having its Galois group isomorphic to the maximal torus of $\GL_2(\F_{\ell})$ (see \cite[pp. 271--272]{MuMuSa88}), 
where $\ell$ is some sufficiently large prime defined in terms of $x$.
Then they 
estimated the emerging set of primes
by appealing to their improved effective version of the Chebotarev Density Theorem,
assuming only GRH,
since 
AHC is known in an abelian setting.
Overall, their refined mod $\ell$ approach, together with an improvement of \cite{Zy15},
 led to the upper bounds (\ref{ec-GRH}).
 In \cite{Be16}, Bella\"{i}che refined the aforementioned
improved version of the effective Chebotarev Density Theorem of Murty, Murty, and Saradha in certain cases 
and succeeded in generalizing the proof of (\ref{ec-GRH})
in the case $g = 2$  and $A$ has a trivial $\overline{\Q}$-endomorphism ring,
proving, under both GRH and AHC, that
$\pi_A(x, 0) \ll_A \frac{x^{1 - \frac{1}{10}}}{(\log x)^{1 - \frac{2}{5}}}$.
Unlike the final bound of Murty, Murty, and Saradha, which only 
assumes
GRH,
 that of Bella\"{i}che 
 assumes
   both GRH and AHC.
 
One could also relate mod $\ell$ approach of estimating $\pi_A(x, t)$ to a sieve, such as the large sieve,
and then use an improved version of the effective Chebotarev Density Theorem to execute the sieve
application. This  approach was elaborated on in \cite{Be16}, leading to the improved upper bounds
(\ref{av-Bellaiche}), which hold for $g \geq 1$ and which 
 assume
 both GRH and AHC.

In the case $g = 1$ and $A$ has a trivial $\overline{\Q}$-endomorphism ring,
the mod $\ell$ approach witnessed yet another refinement in \cite{MuMuWo18}.
Therein,
by assuming GRH,  AHC, and PCC,
Murty, Murty, and Wong
improved  the effective Chebotarev Density Theorem of \cite{MuMuSa88}.
Then, they used this theorem directly in the mod $\ell$ approach to prove the upper bounds (\ref{ec-MuMuWo}),
under GRH, AHC, and PCC.
Via this strategy, in order to take advantage of the power of the PCC assumption, one cannot
circumvent the assumption of AHC by working in an abelian extension of number fields.

 Our proof of Theorem \ref{main-thm1} generalizes the mod $\ell$ approach  of \cite{MuMuSa88},
 under GRH, 
 to the case $g \geq 2$, overcoming prior obstacles faced in \cite{Be16}
 and \cite{CoDaSiSt17}.
 The main challenge consists 
 of
  unraveling,
  for some sufficiently large prime $\ell$,
  a suitable distinguished pair of subfields of a finite Galois extension of $\Q$ having
 its Galois group isomorphic to 
 an abelian and sufficiently large  subquotient of $\GSp_{2g}(\F_{\ell})$ (see (\ref{thm1-B/U}) and (\ref{thm1-B/U'}),
  and
 of
 finding
 a suitable conjugacy class in this  group (see  $ \widehat{ \cal{C}_{B}(\ell, t)}$ and $\widehat{\cal{C}'_{B}(\ell, t)}$  introduced in Section 5),
for which
  the effective version of the Chebotarev Density Theorem of \cite{MuMuSa88}
 could be applied successfully.
 
 Our proof of Theorem \ref{main-thm2} generalizes the mod $\ell$ approach of \cite{MuMuWo18},
 under GRH, AHC, and PCC,
  to the case $g \geq 2$, overcoming obstacles faced in \cite{CoDaSiSt17}.
 The main challenge was
 that of
  unraveling,
  for some sufficiently large prime $\ell$,
  a suitable conjugacy class in $\GSp_{2g}(\F_{\ell})$
for which
  the  effective version of the Chebotarev Density Theorem of \cite{MuMuWo18}
 could be applied successfully.

The statements of the Generalized Riemann Hypothesis, Artin's Holomorphy Conjecture, and the Pair Correlation Conjecture,
as well as the main notation used in the paper, are given in Section 2.
The effective  versions of the Chebotarev Density Theorem needed for our proofs are presented in Section 3.
The distinguished subgroups and conjugacy classes of $\GSp_{2g}(\F_{\ell})$ needed in our approaches are discussed in Sections 4 - 5.
The proofs of Theorems \ref{main-thm1} - \ref{main-thm3} are given in Section 6, with
Propositions \ref{key-prop} - \ref{max-lemma-A-generic} playing the role of crucial ingredients.
Following these proofs, brief remarks about possible other approaches towards obtaining conditional bounds for 
$\pi_A(x, t)$ are included in Section 7.

\medskip

\noindent
{\bf{Acknowledgments.}} 
We thank Professor Kiran S. Kedlaya and Professor Peter Sarnak for stimulating discussions related to this work. We are grateful for the many helpful
comments and suggestions from the referees.

\section{Notation}

\noindent
Throughout the paper, we use the following  notation.

\noindent
$\bullet$
Given a finite set $S$, we denote its cardinality by 
$\# S$.

\noindent
$\bullet$
Given suitably defined real functions $h_1, h_2$,
we say that 
$h_1 = \o(h_2)$ if $\ds\lim_{x \rightarrow \infty} \frac{h_1(x)}{h_2(x)} = 0$;
we say that
$h_1 = \O(h_2)$ or, equivalently, that $h_1 \ll h_2$, 
if 
$h_2$ is positive valued 
and
 there exists a positive constant $c$ such that 
$|h_1(x)| \leq c \ h_2(x)$ for all $x$ in the common domain of $h_1$ and $h_2$;
we say that
$h_1 = \O_D(h_2)$ or, equivalently, that $h_1 \ll_D h_2$,
if 
$h_1 = \O(h_2)$
and
the implied $\O$-constant $c$ depends on priorly given data  $D$;
we say that 
$h_1 \sim h_2$ 
if
$\ds\lim_{x \rightarrow \infty} \frac{h_1(x)}{h_2(x)} = 1$.

\noindent
$\bullet$
We use the letters $p$ and $\ell$ to denote rational primes.
We denote by $\pi(x)$ the number of primes $p \leq x$
and recall that, by the Prime Number Theorem,
$\pi(x) \sim \frac{x}{\log x}$.

\noindent 
$\bullet$
 Given a positive integer $m$, we denote by $\Z/m \Z$ the ring of integers modulo $m$.
 When $m$ is a prime $\ell$, we denote $\Z/\ell \Z$ by $\F_{\ell}$ to emphasize 
 its
 field structure.
For an integer $a$, we denote by $a (\mod \ell)$ its residue class modulo $\ell$.

\noindent 
$\bullet$
Given a prime $\ell$, we denote by $\Z_{\ell}$ the ring of $\ell$-adic integers.
We set $\hat{\Z} := \ds \lim_{\leftarrow \atop m} \Z/m \Z$
and recall that there exists a ring isomorphism 
$\hat{\Z}  \simeq \ds\prod_{\ell} \Z_{\ell}$.

\noindent
$\bullet$
 Given a number field $K$, 
 we denote  
 by  ${\cal{O}}_K$ its ring of integers,
  by ${\sum}_K$ the set of non-zero prime ideals of  ${\cal{O}}_K$,
 by $[K:\Q]$ the degree of $K$ over $\Q$,
 by $d_K \in \Z \backslash \{0\}$ the discriminant of an integral basis of ${\cal{O}}_K$,
 and 
 by  $\disc(K/\Q) = \Z d_K \unlhd \Z$ the discriminant ideal of $K/\Q$.
 For a prime ideal $\mathfrak{p} \in {\sum}_K$, 
 we denote by $N_{K/\Q}(\mathfrak{p})$ its norm in $K/\Q$.
 We say that $K$ satisfies the Generalized Riemann Hypothesis  (GRH)  if
 the Dedekind zeta function $\zeta_K$ of $K$ has the property that,
 for any $\rho \in \C$ with $0 \leq \Re \rho \leq 1$ and $\zeta_K(\rho) = 0$, we have $\Re(\rho) = \frac{1}{2}$. When $K=\Q$, the Dedekind zeta function is the Riemann zeta function, in which case we refer to GRH  as the Riemann Hypothesis (RH).

\noindent
$\bullet$
Given  a finite Galois extension $L/K$ of number fields
and given an irreducible character $\chi$ of the Galois group of $L/K$,
we denote 
by $\mathfrak{f}(\chi) \unlhd {\cal{O}}_K$ the global Artin conductor of $\chi$,
by $A_{\chi} := |d_L|^{\chi(1)} N_{K/\Q}(\mathfrak{f}(\chi)) \in \Z$ the conductor of $\chi$,
and by ${\cal{A}}_{\chi}(T)$ the function of a positive real variable $T > 3$ defined by the relation
$$
\log {\cal{A}}_{\chi}(T) = \log A_{\chi} + \chi(1) [K:\Q] \log T.
$$
 
\noindent
$\bullet$
Given  a finite Galois extension $L/K$ of number fields,
we say that it satisfies Artin's Holomorphy Conjecture (AHC)
if, for any irreducible character $\chi$ of the Galois group of $L/K$,
the Artin L-function $L(s, \chi, L/K)$ extends to a function that is
analytic on the whole $\C$, except at $s = 1$ when $\chi = 1$.
We recall that, if
we assume GRH for $L$ and AHC for $L/K$, 
then, given 
any  irreducible character $\chi$ of the Galois group of $L/K$,
and given any non-trivial zero 
$\rho$ of $L(s, \chi, L/K)$, 
 the real part $\Re \rho$ of $\rho$ satisfies
$\Re \rho = \frac{1}{2}$.
In this case, we write $\rho = \frac{1}{2} + i \gamma$, where 
$\gamma$ denotes the imaginary part 
of $\rho$.

\noindent
$\bullet$
Given  a finite Galois extension $L/K$ of number fields,
let us assume GRH for $L$ and AHC for $L/K$.
For an  irreducible character $\chi$ of the Galois group of $L/K$
and an arbitrary $T > 0$, 
we define the pair correlation function of $L(s, \chi, L/K)$ by
\begin{equation*}\label{pair-cor-fcn}
{\cal{P}}_T (X, \chi)
:=
\ds\sum_{- T \leq \gamma_1 \leq T}
\ds\sum_{- T \leq \gamma_2 \leq T}
w(\gamma_1 - \gamma_2)
e((\gamma_1 - \gamma_2) X),
\end{equation*}
where
$\gamma_1$ and $\gamma_2$ range over all the imaginary parts of the non-trivial zeroes
$\rho = \frac{1}{2} +  \gamma$
 of $L(s, \chi, L/K)$,
counted with multiplicity, 
and where,
 for an arbitrary real number $u$,
$e(u) := \exp(2 \pi i u)$ and $w(u) := \frac{4}{4 + u^2}$.
We say that the extension $L/K$ satisfies the Pair Correlation Conjecture (PCC) 
if, 
 for any irreducible character $\chi$ of the Galois group of $L/K$ and for any  
 $c>0$ and $T > 3$,
 provided
$0 \leq Y \leq c \  \chi(1) [K:\Q] \log T$,
we have
$$
{\cal{P}}_T(Y, \chi) \ll_c \chi(1)^{-1} T \log {\cal{A}}_{\chi}(T). 
$$

\noindent
$\bullet$
Given a field $F$, we denote 
by $\car{F}$ its field characteristic,
by  $\overline{F}$ a fixed algebraic closure of $F$,
by $F^{\text{sep}}$ a fixed separable closure of $F$ in $\overline{F}$,
and
by $\Gal(F^{\text{sep}}/F)$ the absolute Galois group of $F$.
 
\noindent
$\bullet$
Given a non-zero unitary commutative ring $R$, we denote by
$R^{\times}$  its group of multiplicative units.

\noindent
$\bullet$
Given 
a non-zero unitary commutative ring $R$
and
a free module ${\cal{M}}$ over  $R$, of finite rank $n \geq 1$,
endowed with a non-degenerate alternating bilinear form 
$e : {\cal{M}} \times {\cal{M}} \rightarrow R$,
we denote by $\GSp({\cal{M}}, e)$
the group of symplectic similitudes of ${\cal{M}}$ with respect to $e$,
that is,
the group of $R$-automorphisms $\sigma\in \Aut({\cal{M}})$
such that
there exists $\mu \in R^{\times}$
with the property that
$e (\sigma(v), \sigma(w)) = \mu \ e(v, w)$ for all $v, w \in {\cal{M}}$.
Since ${\cal{M}}$ has a unique (up to isomorphism) non-degenerate alternating bilinear form, 
we may write $\GSp_{n}(R)$ for $\GSp({\cal{M}}, e)$.

\noindent 
$\bullet$
Given a non-zero unitary commutative ring $R$ and an integer $n \geq 1$, 
we denote by
$M_{n}(R)$
the ring of $n \times n$ matrices with entries in $R$
and by $I_n$ the  identity  matrix in $M_n(R)$.
For an arbitrary matrix $M \in M_n(R)$,
we denote by $\tr M$ and $\det M$ its trace and determinant,
and by $M^t$ its transpose.
We define the general linear group  $\GL_{n}(R)$
 as the collection of  $M \in M_n(R)$ with $\det M \in R^{\times}$.
  For 
$a_1, \ldots, a_g \in R$,
we define
$$
 \text{diag}(a_1, \ldots, a_n)  :=
    \begin{pmatrix} 
  a_{1}  &  & \\
  &  \ddots & \\
  & & a_{n}  
  \end{pmatrix}
  \in
  M_n(R).
  $$
 When $n = 2g$ for some integer $g \geq 1$,
 we define
\[
J_{2g}  :=   \begin{pmatrix} 0 & I_{g} \\  -I_g & 0 \end{pmatrix} \in M_{2 g}(R).
\] 
The general symplectic group over $R$ 
may then be described as
\[
\GSp_{2g}(R) =\left\{M\in \GL_{2g}(R): M^{t}J_{2g}M=\mu J_{2g} \ \text{for some} \ \mu \in R^{\times} \right\}.
\]
Associated to $\GSp_{2g}(R)$, 
we have the  character 
$
\GSp_{2g}(R)  \longrightarrow R^{\times},
$
$M \mapsto \mu$.
The scalar $\mu$ corresponding to $M$ is 
called
 the multiplicator of $M$.
 We write the group of  scalar matrices in $\GSp_{2g}(R)$ as $\Lambda_{2g}(R)$
and note that it is isomorphic to $R^{\times}$.
We define the projective general symplectic group $\PGSp_{2g}(R)$ as the quotient 
$\GSp_{2g}(R)/\Lambda_{2g}(R)$.

\section{Frobenius distributions in number field extensions}

In this section, we record  results on the theme of Frobenius distributions in a number field Galois extension;
these results will play a crucial role in the proofs of Theorems \ref{main-thm1} - \ref{main-thm3}.

Let $L/K$ be a finite Galois extension of number fields.
Denote
 by $\Gal(L/K)$ the Galois group of $L/K$
 and keep the number field notation introduced in Section 2.
 Denote by $\Gal(L/K)^{\#}$ the set of  conjugacy classes of $\Gal(L/K)$.
 Additionally, 
 denote
 by  $[L:K]$ the degree of $L/K$
 and
 by $\disc(L/K)$ the discriminant ideal of $L/K$.

We define
$$
 P(L/K) := \{p \ \text{rational prime}: \ \exists \ \mathfrak{p} \in {\sum}_{K}
 \ \text{such that} \
 \mathfrak{p} \mid p \ \text{and} \ \mathfrak{p} \mid \disc(L/K)\},
 $$
 $$
 M(L/K) := 2 [L:K] |d_K|^{\frac{1}{[K:\Q]}} \ds\prod_{p \in P(L/K)} p,
 $$
and we recall  from \cite[Prop.~5, p.~129]{Se81}  that
\begin{equation}\label{hensel}
\log \left|N_{K/\Q} (\disc (L/K))\right|
\leq
([L : \Q] - [K : \Q])
\left(
\ds\sum_{p \in P(L/K)} \log p
\right)
+
[L : \Q] \log [L : K].
\end{equation}

For a place  $\wp \in \Sigma_L$,
we  denote 
 by ${\cal{D}}_{\wp}$ its decomposition group in $L/K$, 
 by ${\cal{I}}_{\wp}$ its inertia group in $L/K$,
 and
 by $\left(\frac{L/K}{\wp}\right) \in {\cal{D}}_{\wp}/{\cal{I}}_{\wp}$ its Frobenius element in $L/K$.
 
 For a  place  $\mathfrak{p} \in \Sigma_K$,
 we set 
 $$\left(\frac{L/K}{\mathfrak{p}}\right) 
 := \left\{\left(\frac{L/K}{\wp}\right): \wp \in \Sigma_L, \wp \ \text{lies over} \ \mathfrak{p} \right\}.$$
 
For  a non-empty set ${\cal{C}} \subseteq \Gal(L/K)$, stable under conjugation by elements of $\Gal(L/K)$,
we denote by $\delta_{\cal{C}} : \Gal(L/K) \longrightarrow \{0, 1\}$
its characteristic function
and,
for an arbitrary place 
$\mathfrak{p} \in \Sigma_K$
 and an arbitrary integer $m \geq 1$,  we set
$$
\delta_{\cal{C}}\left(\left(\frac{L/K}{\mathfrak{p}}\right)^m\right)
:=
\frac{1}{\#{\cal{I}}_{\wp}} 
\ds\sum_{
\gamma \in {\cal{D}}_{\wp}
\atop{
\gamma {\cal{I}}_{\wp} = \left(\frac{L/K}{\wp}\right)^m \in {\cal{D}}_{\wp}/{\cal{I}}_{\wp}
}
}
\delta_{\cal{C}}(\gamma),
$$
where $\wp \in \Sigma_L$ is an arbitrary place above $\mathfrak{p}$,
whose choice leaves  the above definition unchanged.
For a real number $x \geq 2$, we set
\begin{eqnarray*}
\pi_{\cal{C}}(x, L/K)
&:=&
\ds\sum_{
\mathfrak{p} \in {\sum}_K
\atop{
\mathfrak{p} \nmid \disc(L/K)
\atop{
N_{K/\Q}(\mathfrak{p})\leq x
}
}
}
\delta_{\cal{C}}\left(
\left(\frac{L/K}{\mathfrak{p}}\right)
\right),
\\
\tilde{\pi}_{\cal{C}}(x, L/K)
&:=&
\ds\sum_{m \geq 1}
\frac{1}{m}
\ds\sum_{
\mathfrak{p} \in {\sum}_K
\atop{
N_{K/\Q}(\mathfrak{p}^m) \leq x
}
}
\delta_{\cal{C}}\left(
\left(\frac{L/K}{\mathfrak{p}}\right)^m
\right),
\end{eqnarray*}
and
we recall from \cite[Proposition 7, p. 138]{Se81} and \cite[Lemma 2.7, p. 8]{Zy15} that
\begin{equation}\label{pi-versus-pi-tilde}
\tilde{\pi}_{\cal{C}}(x, L/K) 
= \pi_{\cal{C}}(x, L/K)
+
\O\left(
[K:\Q]
\left(
\frac{x^{\frac{1}{2}}}{\log x} + \log M(L/K)
\right)
\right).
\end{equation}

The variations of the effective version of the Chebotarev Density Theorem of \cite{LaOd77} that relate to
 the proofs of our main theorems are as follows.

\begin{theorem}\label{chebotarev-GRH-AHC-PCC}
Let $L/K$ be a Galois extension of number fields.
Let $\emptyset \neq \cal{C} \subseteq \Gal(L/K)$ 
be a subset stable under conjugation by elements of $\Gal(L/K)$.
\begin{enumerate}
\item[(i)]
Assume GRH for the Dedekind zeta function of $L$. 
Then
\begin{equation*}\label{cheb-LaOd} 
\pi_{\cal{C}}(x, L/K) 
= 
\frac{\#\cal{C}}{[L:K]} \pi(x)
+
\O\left(
(\#\cal{C}) \   x^{\frac{1}{2}} \ [K:\Q]
\left(
\frac{\log |d_L|}{[L:\Q]}
+
 \log x
\right)
\right).
\end{equation*}
\item[(ii)]
Assume GRH for the Dedekind zeta function of $L$ and AHC for the extension $L/K$.
Then
\begin{equation*}\label{cheb-MuMuSa} 
\pi_{\cal{C}}(x, L/K) 
= 
\frac{\#\cal{C}}{[L:K]} \pi(x)
+
\O\left(
(\#\cal{C})^{\frac{1}{2}} \   x^{\frac{1}{2}} \ [K:\Q]
\log (M(L/K) x)
\right).
\end{equation*}
\item[(iii)]
Assume GRH for the Dedekind zeta function of $L$, and AHC and PCC for the extension $L/K$.
Then
 \begin{equation*}\label{cheb-MuMuWo}
 \pi_{\cal{C}}(x, L/K)
=
\frac{\#\cal{C}}{[L:K]} \pi(x)
+
\O\left(
(\#\cal{C})^{\frac{1}{2}} \left(\frac{\#\Gal(L/K)^{\#}}{[L:K]} \right)^{\frac{1}{2}}  x^{\frac{1}{2}} \ [K:\Q]^{\frac{1}{2}} 
\log (M(L/K)x)\right).
\end{equation*}
\end{enumerate}
\end{theorem}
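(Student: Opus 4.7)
The plan is to derive all three parts in parallel from the explicit formula for Artin $L$-functions, following the classical strategy of Lagarias--Odlyzko as refined by Murty--Murty--Saradha and Murty--Murty--Wong. The first step is to pass from $\pi_{\cal{C}}(x, L/K)$ to the prime-power weighted variant $\tilde{\pi}_{\cal{C}}(x, L/K)$ via the relation \eqref{pi-versus-pi-tilde}, whose error term is absorbed into the main error terms of all three bounds. Next, I would decompose the characteristic function as
\[
\delta_{\cal{C}}(g) = \frac{\#\cal{C}}{[L:K]} \sum_{\chi} \overline{\chi(\cal{C})}\, \chi(g),
\]
where $\chi$ ranges over the irreducible characters of $\Gal(L/K)$ and $\chi(\cal{C}) := (\#\cal{C})^{-1}\sum_{g \in \cal{C}} \chi(g)$, thereby expressing $\tilde{\pi}_{\cal{C}}(x, L/K)$ as a weighted sum of Chebyshev-type sums $\psi(x, \chi, L/K) := \sum_{N_{K/\Q}(\fp^m) \leq x} \chi\bigl(\Frob_{\fp}^{m}\bigr) \log N_{K/\Q}(\fp)$.

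The core analytic input is the explicit formula
\[
\psi(x, \chi, L/K) = \delta_{\chi}\, x - \sum_{|\gamma| \leq T} \frac{x^{\rho}}{\rho} + \O\!\left(\frac{x (\log x)\, \log {\cal{A}}_{\chi}(T)}{T} + \chi(1)[K:\Q](\log x)^{2}\right),
\]
where the sum runs over the non-trivial zeros $\rho = \beta + i \gamma$ of $L(s, \chi, L/K)$. Under GRH for $L$, every such zero has $\beta = \tfrac{1}{2}$, so $|x^{\rho}/\rho| \leq x^{1/2}/(|\gamma|+1)$, and the zero-counting bound $N(T, \chi) \ll T \log {\cal{A}}_{\chi}(T)$ controls the tail. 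For part (i), summing over $\chi$ using $|\chi({\cal{C}})| \leq 1$, invoking the decomposition $\log |d_{L}| = \sum_{\chi} \chi(1) \log A_{\chi}$ (so that one cannot work with each character individually), and optimizing $T \asymp x^{1/2}$ recovers the Lagarias--Odlyzko estimate. For part (ii), AHC lets me apply the explicit formula to each irreducible $\chi$ separately, and a Cauchy--Schwarz application to $\sum_{\chi} \chi(1)\, |\chi({\cal{C}})|$ combined with the Plancherel identity $\sum_{\chi} |\chi({\cal{C}})|^{2} = [L:K]/\#{\cal{C}}$ produces the asserted gain from $\#{\cal{C}}$ to $(\#{\cal{C}})^{1/2}$.

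Part (iii) is the most delicate and will be the main obstacle. Here the PCC hypothesis provides a sharp second-moment estimate for the contribution of zeros in short intervals, which after inserting a suitable smooth kernel gives control on the square of $\sum_{|\gamma| \leq T} x^{\rho}/\rho$ that is superior to what follows from the pointwise zero-counting bound. The challenge is to combine this PCC estimate with class-function orthogonality so as to produce the factor $(\#\Gal(L/K)^{\#}/[L:K])^{1/2}$ in place of the naive $[L:K]^{1/2}$ that would arise from trivially summing over characters; executing this step requires one to exchange a sum over irreducible characters $\chi$ for a sum over conjugacy classes using the orthogonality relations, and then to apply PCC class by class so that the bound on ${\cal{P}}_{T}(Y, \chi)$ gets weighted by $\chi(1)^{-1}$, exactly compensating the $\chi(1)^{2}$ factor coming from the character expansion. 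Once this second-moment bound is established, optimizing $T$ and converting back from $\tilde{\pi}_{\cal{C}}$ to $\pi_{\cal{C}}$ via \eqref{pi-versus-pi-tilde} yields the bound in part (iii).
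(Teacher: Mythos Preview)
The paper does not prove this theorem at all: its entire proof consists of three citations, namely part (i) to \cite[Th\'eor\`eme~4, p.~133]{Se81}, part (ii) to \cite[Corollary 3.7, p.~265]{MuMuSa88}, and part (iii) to \cite[Theorem 1.2, p.~402]{MuMuWo18}. The statement is recorded purely as background input from the literature.

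Your proposal, by contrast, sketches the actual analytic proofs found in those three references. The outline you give is essentially correct and faithfully tracks the strategies of Serre/Lagarias--Odlyzko, Murty--Murty--Saradha, and Murty--Murty--Wong respectively: the passage from $\pi_{\cal{C}}$ to $\tilde\pi_{\cal{C}}$, the character expansion of $\delta_{\cal{C}}$, the explicit formula for $\psi(x,\chi,L/K)$, and the use of Cauchy--Schwarz together with the Plancherel relation $\sum_{\chi}|\chi({\cal{C}})|^{2}=[L:K]/\#{\cal{C}}$ are all the right ingredients. Your description of part~(iii) is honest in flagging the delicate step, and the mechanism you describe---that the PCC bound carries a factor $\chi(1)^{-1}$ which offsets the $\chi(1)^{2}$ from the character side and thereby trades a sum over characters for a count of conjugacy classes---is indeed the heart of the Murty--Murty--Wong argument.

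So there is no error in your plan; the only mismatch is one of scope. For the purposes of this paper the theorem is a black box, and reproducing the full explicit-formula arguments would be both unnecessary and disproportionate. If you want your write-up to align with the paper, replace the analytic sketch by the three citations above; if you prefer to keep a self-contained argument, what you have is a reasonable roadmap, though part~(iii) would still require substantial work to make rigorous.
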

\begin{proof}
Part (i) is \cite[Th\'eor\`eme~4, p.~133]{Se81}.
Part (ii) is \cite[Corollary 3.7, p. 265]{MuMuSa88}. 
Part (iii) is \cite[Theorem 1.2, p. 402]{MuMuWo18}.
\end{proof}

In the proofs of our main results, we do not always need the full strength of the effective asymptotic formulae 
in Theorem \ref{chebotarev-GRH-AHC-PCC}. 
For example, we do not use part (ii) of this theorem, which we included  for clarity and for 
comparison with parts (i) and (iii).
 Instead of part (ii), we will use Theorem \ref{chebotarev-upper-bound}, stated below. We will also use
Theorem \ref{functorial-quotient-group-pcc}, stated below, 
which is a consequence of part (iii).

\begin{theorem}\label{chebotarev-upper-bound}
Let $L/K$ be a Galois extension of number fields.
Let $\emptyset \neq \cal{C} \subseteq \Gal(L/K)$ 
be a subset stable under conjugation by elements of $\Gal(L/K)$.
Assume GRH for the Dedekind zeta function of $L$ and AHC for the extension $L/K$.
Then
\begin{equation*}
\pi_{\cal{C}}(x, L/K)
\ll
\frac{\#\cal{C}}{[L:K]} \pi(x)
+
(\#\cal{C})^{\frac{1}{2}} 
\frac{x^{\frac{1}{2}}}{\log x}
\
[K:\Q] \log M(L/K).
\end{equation*}
\end{theorem}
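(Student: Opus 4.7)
The plan is to pass from the unweighted count $\pi_{\mathcal{C}}$ to the log-weighted Chebyshev function $\psi_{\mathcal{C}}$ via partial summation, gaining a factor of $\log x$ in the error compared with a direct application of Theorem \ref{chebotarev-GRH-AHC-PCC}(ii). The upper bound sought here is genuinely stronger than the asymptotic of Theorem \ref{chebotarev-GRH-AHC-PCC}(ii) (compare the factor $\log(M(L/K)x)$ there with $\log M(L/K)/\log x$ here), so a different route is required; my route is the standard one, via $\psi_{\mathcal{C}}$.

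First, I would introduce the log-weighted function
\[
\psi_{\mathcal{C}}(x, L/K) := \sum_{m \geq 1} \sum_{\substack{\mathfrak{p} \in \Sigma_K \\ N_{K/\Q}(\mathfrak{p}^m) \leq x}} \delta_{\mathcal{C}}\!\left(\left(\frac{L/K}{\mathfrak{p}}\right)^m\right) \log N_{K/\Q}(\mathfrak{p}).
\]
Under GRH for $\zeta_L$ and AHC for $L/K$, the Murty--Murty--Saradha method, applied to this weighted sum, yields
\[
\psi_{\mathcal{C}}(x, L/K) = \frac{\#\mathcal{C}}{[L:K]}\, x + O\!\left((\#\mathcal{C})^{1/2}\, x^{1/2}\, [K:\Q]\, \log(M(L/K)\, x)\right).
\]
This is proved by decomposing $\delta_{\mathcal{C}}$ in the basis of irreducible characters $\chi$ of $\Gal(L/K)$, applying the truncated explicit formula to each Artin L-function $L(s,\chi,L/K)$ (whose non-trivial zeros lie on the critical line by GRH and AHC), and aggregating via Cauchy--Schwarz. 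The Parseval identity $\sum_\chi |c_\chi|^2 = \#\mathcal{C}/[L:K]$ for the character coefficients of $\delta_{\mathcal{C}}$, combined with the conductor-discriminant bound $\sum_\chi \chi(1) \log A_\chi \ll [L:K]\,[K:\Q]\,\log M(L/K)$, produces the claimed error shape with $(\#\mathcal{C})^{1/2}$ outside.

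Second, I would apply Abel summation to pass from $\psi_{\mathcal{C}}$ to $\tilde\pi_{\mathcal{C}}$:
\[
\tilde\pi_{\mathcal{C}}(x, L/K) = \frac{\psi_{\mathcal{C}}(x, L/K)}{\log x} + \int_2^x \frac{\psi_{\mathcal{C}}(t, L/K)}{t\,(\log t)^2}\, dt.
\]
The main term $\frac{\#\mathcal{C}}{[L:K]} t$ contributes $\frac{\#\mathcal{C}}{[L:K]}\li(x) = \frac{\#\mathcal{C}}{[L:K]}\pi(x) + O(x/\log^2 x)$, absorbed into the main term of the theorem. Splitting $\log(M(L/K)t) = \log M(L/K) + \log t$, the $\log M(L/K)$ piece gives precisely $(\#\mathcal{C})^{1/2}(x^{1/2}/\log x)[K:\Q]\log M(L/K)$ (via the boundary term, plus an integral piece dominated by the same since $\int_2^x t^{-1/2}(\log t)^{-2}\, dt \ll x^{1/2}/\log^2 x$), while the $\log t$ piece gives an additional $O((\#\mathcal{C})^{1/2}\, x^{1/2}\, [K:\Q])$, which for $x$ sufficiently large is absorbed into $\frac{\#\mathcal{C}}{[L:K]}\pi(x)$. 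Finally, invoking (\ref{pi-versus-pi-tilde}) converts $\tilde\pi_{\mathcal{C}}$ into $\pi_{\mathcal{C}}$, the additional $O([K:\Q](x^{1/2}/\log x + \log M(L/K)))$ discrepancy being absorbed into the claimed bound.

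The main obstacle is the first step: establishing the Chebyshev-level asymptotic with the Cauchy--Schwarz savings of $(\#\mathcal{C})^{1/2}$ in place of the trivial $\#\mathcal{C}$. This requires a careful execution of the Murty--Murty--Saradha approach at the level of $\psi_{\mathcal{C}}$ rather than $\pi_{\mathcal{C}}$, with the truncation height $T$ in the explicit formula for each $L(s, \chi, L/K)$ chosen so that the truncation error balances the critical-line zero sum, and the sum over $\chi$ aggregated through Parseval together with the conductor-discriminant control on $A_\chi$. Once this Chebyshev estimate is in hand, the partial summation in the second step is routine.
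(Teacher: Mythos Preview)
The paper does not actually prove this statement; it cites \cite[Theorem~2.3]{Zy15}. So you are attempting more than the paper does. Unfortunately the route you propose, explicit formula for $\psi_{\mathcal C}$ followed by Abel summation, does not reach the stated bound.

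The first difficulty is that the $\psi_{\mathcal C}$ asymptotic you assert is one factor of $\log x$ too strong. Under GRH and AHC, truncating the explicit formula for each $L(s,\chi,L/K)$ at height $T\asymp x$ and bounding the zero sum by $x^{1/2}\sum_{|\gamma|\le T}|\rho|^{-1}$ gives $|\psi_\chi(x)|\ll x^{1/2}\bigl((\log A_\chi)\log x + \chi(1)[K:\Q]\log^2 x\bigr)$; the second term, coming from the zero density $\asymp \chi(1)[K:\Q]\,T\log T$, carries an extra $\log x$ that you have dropped. After the Cauchy--Schwarz aggregation over $\chi$ this yields $\psi_{\mathcal C}(x)-\tfrac{\#\mathcal C}{[L:K]}x\ll (\#\mathcal C)^{1/2}x^{1/2}[K:\Q]\log(M(L/K)x)\log x$, and Abel summation then merely reproduces the Murty--Murty--Saradha bound of Theorem~\ref{chebotarev-GRH-AHC-PCC}(ii), not the sharper bound here. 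Second, even if one grants your $\psi_{\mathcal C}$ bound, the absorption of the residual term $(\#\mathcal C)^{1/2}x^{1/2}[K:\Q]$ into $\tfrac{\#\mathcal C}{[L:K]}\pi(x)$ is not uniform in $L,K,\mathcal C$: take $K$ running through an infinite class-field tower of bounded root discriminant, $L=K$, $\mathcal C=\{1\}$; then $M(L/K)$ stays bounded while $[K:\Q]\to\infty$, and for $x$ in the range $M(L/K)\ll x\ll[K:\Q]^2$ your residual term exceeds both terms on the right of the theorem.

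Zywina's argument gains the factor $1/\log x$ not through Abel summation but by working from the start with a non-negative test function majorising $\mathbf 1_{[1,x]}$ whose Mellin transform decays rapidly on vertical lines; then the critical-line zero sum for each $\chi$ contributes only $x^{1/2}$ times $\sum_\rho(1+|\gamma|)^{-2}\ll \log A_\chi+\chi(1)[K:\Q]$, and one divides once by $\log x$ when stripping the von~Mangoldt weight on the range $N_{K/\Q}(\mathfrak p)\in[x^{1/2},x]$, handling the short initial range trivially. Your Parseval and conductor--discriminant bookkeeping is exactly the right aggregation device; it just has to be applied to this smoothed sum rather than to the sharp-cutoff $\psi_{\mathcal C}$.
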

\begin{proof}
This is \cite[Theorem 2.3, p. 5]{Zy15}.
\end{proof}

\begin{theorem}\label{functorial-quotient-group}
Let $L/K$ be a Galois extension of number fields.
Let $\emptyset \neq \cal{C}_1\subseteq \cal{C}_2 \subseteq \Gal(L/K)$ 
be subsets stable under conjugation by elements of $\Gal(L/K)$.
Let $H \leq \Gal(L/K)$ be a subgroup of $\Gal(L/K)$ 
and
let $N \unlhd H$ be a normal subgroup of $H$.
Assume that:
\begin{enumerate}
\item[(i)]
 every element of $\cal{C}_1$ is conjugate to some element of $H$;
 \item[(ii)]
 $N(\cal{C}_2\cap H) \subseteq \cal{C}_2\cap H$;
 \item[(iii)]
 $H/N$ is an abelian group;
 \item[(iv)]
 GRH holds for the Dedekind zeta function of $L^N$.
 \end{enumerate}
Then 
\begin{eqnarray*}
\pi_{\cal{C}_1}(x, L/K) 
\ll
\frac{\#\widehat{\cal{C}_2 \cap H} \cdot \#N}{\#H} \pi(x)
+
\left(\#\widehat{{\cal{C}_2} \cap H}\right)^{\frac{1}{2}} \frac{x^{\frac{1}{2}}}{\log x} \ [L^H :\Q]\log M\left(L^N/L^H\right)
+
[K:\Q] \log M(L/K),
\end{eqnarray*}
where
$\widehat{{\cal{C}_2} \cap H}$ 
denotes the image of $\cal{C}_2 \cap H$ in 
$\frac{H}{N}$ under the canonical projection
$H \twoheadrightarrow \frac{H}{N}$. 
\end{theorem}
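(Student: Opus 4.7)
The plan is to reduce to an application of Theorem \ref{chebotarev-upper-bound} in the abelian subextension $L^N/L^H$, following the standard strategy of using hypothesis (i) to descend from counting Frobenii in $L/K$ to counting them in $L/L^H$, and then using (ii)--(iii) to push the count down to the quotient $\Gal(L^N/L^H) = H/N$, where Artin's Holomorphy Conjecture is automatic because abelian Artin $L$-functions coincide with Hecke $L$-functions.

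First I would replace $\pi_{\cal{C}_1}$ by $\tilde\pi_{\cal{C}_1}$ via (\ref{pi-versus-pi-tilde}), incurring an error of size $O([K:\Q](x^{1/2}/\log x + \log M(L/K)))$ that produces the final $[K:\Q]\log M(L/K)$ summand in the conclusion. Next, by hypothesis (i), every unramified place $\wp$ of $L$ with Frobenius $\sigma_\wp \in \cal{C}_1$ can be $\Gal(L/K)$-conjugated so that $\sigma_\wp \in \cal{C}_1 \cap H$; then $\mathfrak{p}_H := \wp \cap L^H$ has decomposition group $\langle \sigma_\wp\rangle$ in $L/L^H$ and residue degree $1$ over $\mathfrak{p}_K := \wp \cap K$, so $N_{L^H/\Q}(\mathfrak{p}_H) = N_{K/\Q}(\mathfrak{p}_K) \leq x$. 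Carrying out the Mackey-style accounting of $H$-orbits of $\wp$ above each $\mathfrak{p}_K$, and using the $1/\#{\cal{I}}_\wp$-weighting built into $\tilde\pi$ to handle multiplicities in the (ramified) prime-power terms cleanly, I would obtain
$$
\tilde\pi_{\cal{C}_1}(x, L/K)
\ \leq\
\tilde\pi_{\cal{C}_1 \cap H}(x, L/L^H)
\ \leq\
\tilde\pi_{\cal{C}_2 \cap H}(x, L/L^H),
$$
the second inequality being $\cal{C}_1 \subseteq \cal{C}_2$.

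Then I would use (ii) to note that $\cal{C}_2 \cap H$ is a union of $N$-cosets, so ``$\sigma_\wp \in \cal{C}_2 \cap H$'' is equivalent to ``the image of $\sigma_\wp$ in $H/N = \Gal(L^N/L^H)$ lies in $\widehat{\cal{C}_2 \cap H}$''; by functoriality of Frobenius under quotients this image is the Frobenius of $\mathfrak{p}_H$ in $L^N/L^H$, giving
$$
\tilde\pi_{\cal{C}_2 \cap H}(x, L/L^H)
\ =\
\tilde\pi_{\widehat{\cal{C}_2 \cap H}}(x, L^N/L^H).
$$
Because $H/N$ is abelian by (iii), AHC holds for $L^N/L^H$; combined with GRH for $L^N$ from (iv), Theorem \ref{chebotarev-upper-bound} applied to $L^N/L^H$ and to the conjugation-stable set $\widehat{\cal{C}_2 \cap H}$ yields the required bound, with $[L^N:L^H] = \#H/\#N$ producing the main term $\frac{\#\widehat{\cal{C}_2 \cap H} \cdot \#N}{\#H}\pi(x)$. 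A final use of (\ref{pi-versus-pi-tilde}) to convert $\tilde\pi$ back to $\pi$ for $L^N/L^H$ contributes an error that is absorbed into the Chebotarev error term.

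The main obstacle is the descent inequality $\tilde\pi_{\cal{C}_1}(x, L/K) \leq \tilde\pi_{\cal{C}_1 \cap H}(x, L/L^H)$, which requires careful bookkeeping of how the $H$-orbits of places of $L$ above a single $\mathfrak{p}_K$ partition according to whether their Frobenii can be brought into $H$, and a matching comparison of weights so that the right-hand side genuinely dominates. Everything else is routine: GRH is supplied by (iv), AHC for $L^N/L^H$ is free by (iii), and Theorem \ref{chebotarev-upper-bound} delivers the claimed bound for an abelian extension whose smaller degree and discriminant make it far more favourable than a direct Chebotarev estimate in $L/K$.
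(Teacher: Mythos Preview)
Your proposal is correct and follows essentially the same route as the paper: convert $\pi$ to $\tilde\pi$ via (\ref{pi-versus-pi-tilde}), use (i) to descend to $L/L^H$, use $\cal{C}_1\subseteq\cal{C}_2$ and (ii) to pass to the quotient $L^N/L^H$, invoke (iii) to get AHC for free in that abelian extension, and then apply Theorem~\ref{chebotarev-upper-bound} under GRH from (iv). The paper handles the two steps you flag as ``the main obstacle'' by citing \cite[Lemma~2.6~(i)--(ii)]{Zy15} rather than redoing the Mackey-style bookkeeping, but the content is the same.
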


\begin{proof}
Using assumption (i) and \cite[Lemma 2.6 (i)]{Zy15},
we obtain that
${\tilde{\pi}}_{ {\cal{C}}_1}(x, L/K) \leq \tilde{\pi}_{ {\cal{C}}_1 \cap H}\left(x, L/L^H\right)$.
Since ${\cal{C}}_1 \subseteq {\cal{C}}_2$, we obviously have 
$\tilde{\pi}_{ {\cal{C}}_1 \cap H}\left(x, L/L^H\right) \leq \tilde{\pi}_{ {\cal{C}}_2 \cap H}\left(x, L/L^H\right)$.
Using assumption (ii), \cite[Lemma 2.6 (ii)]{Zy15}, and  asymptotic formula (\ref{pi-versus-pi-tilde}),
 we obtain that
$$
\tilde{\pi}_{ {\cal{C}}_2 \cap H}\left(x, L/L^H\right) 
= 
\tilde{\pi}_{ \widehat{{\cal{C}}_2 \cap H}}\left(x, L^N/L^H\right)
=
\pi_{ \widehat{{\cal{C}}_2 \cap H}}\left(x, L^N/L^H\right)
+
\O\left([L^H : \Q] \left(\frac{x^{\frac{1}{2}}}{\log x} + \log M\left(L^N/L^H\right) \right)\right).
$$
Using assumption (iii) and \cite{Ar27}, we deduce that AHC holds for the extension $L^N/L^H$.
Then, using assumption (iv) and  Theorem \ref{chebotarev-upper-bound}, we deduce that
$$
\pi_{ \widehat{{\cal{C}}_2 \cap H}}\left(x, L^N/L^H\right)
\ll
\frac{
\# (\widehat{ {\cal{C}}_2 \cap H} )
}{
[H:N]
} 
\pi(x)
+
\# (\widehat{ {\cal{C}}_2 \cap H} )^{\frac{1}{2}}
\
\frac{x^{\frac{1}{2}}}{\log x}
\
[L^H : \Q]
\log M\left(L^N/L^H\right).
$$
Finally, putting all these observations together and using (\ref{pi-versus-pi-tilde}) again to relate
$\pi_{ {\cal{C}}_1}(x, L/K)$
to
$\tilde{\pi}_{ {\cal{C}}_1}(x, L/K)$,
we infer that
\begin{eqnarray*}
\pi_{ {\cal{C}}_1}(x, L/K)
&\ll&
\frac{
\# (\widehat{ {\cal{C}}_2 \cap H} )
}{
[H:N]
} 
\pi(x)
+
\# (\widehat{ {\cal{C}}_2 \cap H} )^{\frac{1}{2}}
\
\frac{x^{\frac{1}{2}}}{\log x}
\
[L^H : \Q]
\log M\left(L^N/L^H\right)
\\
&+&
\frac{x^{\frac{1}{2}}}{\log x} ([L^H:\Q] + [K:\Q])
+
[L^H:\Q] \log M\left(L^N/L^H\right)
+
[K:\Q] \log M(L/K)
\\
&\ll&
\frac{
\# (\widehat{ {\cal{C}}_2 \cap H} )
}{
[H:N]
} 
\pi(x)
+
\# (\widehat{ {\cal{C}}_2 \cap H} )^{\frac{1}{2}}
\
\frac{x^{\frac{1}{2}}}{\log x}
\
[L^H : \Q]
\log M\left(L^N/L^H\right)
+
[K:\Q] \log M(L/K).
\end{eqnarray*}
\end{proof}

\begin{theorem}\label{functorial-quotient-group-pcc}
Let $L/K$ be a Galois extension of number fields.
Let $\emptyset \neq \cal{C} \subseteq \Gal(L/K)$ 
be a subset stable under conjugation by elements of $\Gal(L/K)$.
Let $H \leq \Gal(L/K)$ be a subgroup of $\Gal(L/K)$ 
and
let $N \unlhd H$ be a normal subgroup of $H$.
Assume that:
\begin{enumerate}
\item[(i)]
 every element of $\cal{C}$ is conjugate to some element of $H$;
 \item[(ii)]
 $N(\cal{C}\cap H) \subseteq \cal{C}\cap H$;
 \item[(iii)]
 GRH holds for the Dedekind zeta function of $L^N$;
 \item[(iv)]
AHC and PCC hold for the number field extension $L^N/L^H$.
 \end{enumerate}
Then 
\begin{eqnarray*}
\pi_{ {\cal{C}}}(x, L/K)
&\ll&
\frac{
\# (\widehat{ {\cal{C}} \cap H} )\cdot \# N
}{
\#H
} 
\pi(x)
+
\# (\widehat{ {\cal{C}} \cap H} )^{\frac{1}{2}}
\
\left(\frac{\#\Gal(L^N/L^H)^{\#}}{[H: N]} \right)^{\frac{1}{2}}
\
x^{\frac{1}{2}}
\
[L^H : \Q]^{\frac{1}{2}}
\log \left(M(L^N/L^H) x\right)\\
&+&
[L^H:\Q]\left(\frac{x^{\frac{1}{2}}}{\log x} + \log M(L^N/L^H)\right)+
[K:\Q] \log M(L/K),
\end{eqnarray*}
where
$\widehat{{\cal{C}} \cap H}$ 
denotes the image of $\cal{C} \cap H$ in 
$\frac{H}{N}$ under the canonical projection
$H \twoheadrightarrow \frac{H}{N}$. 
\end{theorem}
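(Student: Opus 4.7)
The plan is to mimic, essentially line-by-line, the argument used for Theorem \ref{functorial-quotient-group}, with the sole substantive change being that the final Chebotarev-type estimate is supplied by part (iii) of Theorem \ref{chebotarev-GRH-AHC-PCC} rather than by Theorem \ref{chebotarev-upper-bound}. Because hypotheses (iii) and (iv) directly deliver GRH for $L^N$, and both AHC and PCC for $L^N/L^H$, there is no need to invoke Artin's theorem to obtain AHC for an abelian subextension; correspondingly, no abelian-quotient hypothesis on $H/N$ is needed here.

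I would begin by applying \cite[Lemma 2.6(i)]{Zy15} together with assumption (i) to obtain $\tilde{\pi}_{\cal{C}}(x, L/K) \leq \tilde{\pi}_{\cal{C} \cap H}(x, L/L^H)$, reducing the original count to one over the smaller extension $L/L^H$. Next, using assumption (ii) together with \cite[Lemma 2.6(ii)]{Zy15}, the count descends to the quotient extension via the identity $\tilde{\pi}_{\cal{C} \cap H}(x, L/L^H) = \tilde{\pi}_{\widehat{\cal{C} \cap H}}(x, L^N/L^H)$, and one converts $\tilde{\pi}$ to $\pi$ on the right-hand side by means of (\ref{pi-versus-pi-tilde}), at a cost of
$$
\O\left([L^H:\Q]\left(\frac{x^{\frac{1}{2}}}{\log x} + \log M(L^N/L^H)\right)\right).
$$

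With hypotheses (iii) and (iv) in hand, I would then apply Theorem \ref{chebotarev-GRH-AHC-PCC}(iii) to the Galois extension $L^N/L^H$ and to the conjugation-stable subset $\widehat{\cal{C} \cap H} \subseteq \Gal(L^N/L^H)$. This yields the main term
$$
\frac{\#\widehat{\cal{C}\cap H}}{[H:N]}\pi(x) = \frac{\#\widehat{\cal{C}\cap H}\cdot \#N}{\#H}\pi(x),
$$
plus a PCC-type error involving the factor $(\#\widehat{\cal{C}\cap H})^{\frac{1}{2}}(\#\Gal(L^N/L^H)^{\#}/[H:N])^{\frac{1}{2}}$, scaled by $x^{\frac{1}{2}}[L^H:\Q]^{\frac{1}{2}}\log(M(L^N/L^H)x)$. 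Finally, I would return from $\tilde{\pi}_{\cal{C}}$ to $\pi_{\cal{C}}$ over $L/K$ by another appeal to (\ref{pi-versus-pi-tilde}), which contributes an additional $\O([K:\Q](x^{\frac{1}{2}}/\log x + \log M(L/K)))$; collecting all terms produces the stated inequality.

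Given the close parallelism with the proof of Theorem \ref{functorial-quotient-group}, no step presents a serious obstacle. The only care required is the bookkeeping of the error contributions from the two applications of (\ref{pi-versus-pi-tilde}) and verifying that these secondary errors can be cleanly absorbed alongside the PCC-based Chebotarev bound, so that the $[L^H:\Q]^{\frac{1}{2}}$ dependence of the main PCC error is preserved and not spuriously promoted to a linear dependence in $[L^H:\Q]$.
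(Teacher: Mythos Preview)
Your proposal is correct and follows essentially the same approach as the paper's own proof: reduce via \cite[Lemma~2.6]{Zy15} and assumptions (i)--(ii) to $\tilde{\pi}_{\widehat{\cal{C}\cap H}}(x, L^N/L^H)$, convert $\tilde{\pi}$ to $\pi$ via (\ref{pi-versus-pi-tilde}), apply Theorem~\ref{chebotarev-GRH-AHC-PCC}(iii) using hypotheses (iii)--(iv), and undo the first $\tilde{\pi}\to\pi$ conversion over $L/K$. The paper handles the final bookkeeping exactly as you anticipate, absorbing the $[K:\Q]\,x^{1/2}/\log x$ term into the $[L^H:\Q]\,x^{1/2}/\log x$ term (since $K\subseteq L^H$).
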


\begin{proof}
Similarly to the proof of the previous theorem, 
 we obtain that
\begin{align*}
{\tilde{\pi}}_{ {\cal{C}}}(x, L/K) \leq \tilde{\pi}_{ {\cal{C}} \cap H}(x, L/L^H) 
&= 
\tilde{\pi}_{ \widehat{{\cal{C}} \cap H}}(x, L^N/L^H)\\
&=
\pi_{ \widehat{{\cal{C}} \cap H}}(x, L^N/L^H)
+
\O\left([L^H : \Q] \left(\frac{x^{\frac{1}{2}}}{\log x} + \log M(L^N/L^H) \right)\right).
\end{align*}
Then, using assumptions (iii)-(iv) and part (iii) of Theorem  \ref{chebotarev-GRH-AHC-PCC}, we deduce that
$$
\pi_{ \widehat{{\cal{C}} \cap H}}(x, L^N/L^H)
=
\frac{
\# (\widehat{ {\cal{C}} \cap H} )
}{
[H:N]
} 
\pi(x)
+
\O\left(\# (\widehat{ {\cal{C}} \cap H} )^{\frac{1}{2}}
\
\left(\frac{\#\Gal(L^N/L^H)^{\#}}{[L^N : L^H]} \right)^{\frac{1}{2}}
\
x^{\frac{1}{2}}
\
[L^H : \Q]^{\frac{1}{2}}
\log\left( M(L^N/L^H) x\right)\right).
$$
Putting all these observations together, we infer that
\begin{eqnarray*}
\pi_{ {\cal{C}}}(x, L/K)
&\ll&
\frac{
\# (\widehat{ {\cal{C}}\cap H} )
}{
[H:N]
} 
\pi(x)
+
\# (\widehat{ {\cal{C}} \cap H} )^{\frac{1}{2}}
\
\left(\frac{\#\Gal(L^N/L^H)^{\#}}{[H: N]} \right)^{\frac{1}{2}}
\
x^{\frac{1}{2}}
\
[L^H : \Q]^{\frac{1}{2}}
\log \left(M(L^N/L^H) x\right)
\\
&+&
\frac{x^{\frac{1}{2}}}{\log x} ([L^H:\Q] + [K:\Q])
+
[L^H:\Q] \log M(L^N/L^H)
+
[K:\Q] \log M(L/K)
\\
&\ll&
\frac{
\# (\widehat{ {\cal{C}} \cap H} )
}{
[H:N]
} 
\pi(x)
+
\# (\widehat{ {\cal{C}} \cap H} )^{\frac{1}{2}}
\
\left(\frac{\#\Gal(L^N/L^H)^{\#}}{[H: N]} \right)^{\frac{1}{2}}
\
x^{\frac{1}{2}}
\
[L^H : \Q]^{\frac{1}{2}}
\log \left(M(L^N/L^H) x\right)\\
&+&
[L^H:\Q]\left(\frac{x^{\frac{1}{2}}}{\log x} + \log M(L^N/L^H) \right)+
[K:\Q] \log M(L/K).
\end{eqnarray*}
\end{proof}

In Section 6, we will apply 
Theorems \ref{chebotarev-GRH-AHC-PCC},  \ref{functorial-quotient-group}, and  \ref{functorial-quotient-group-pcc} 
to number field extensions associated to an abelian variety defined over $\Q$.

\section{Distinguished subgroups in the general symplectic groups over a finite prime field}\label{linear-AG}

In this section, we fix an arbitrary integer $g \geq 1$ and an arbitrary
rational prime $\ell$,
and
turn our attention to particular subgroups of $\GSp_{2g}(\F_{\ell})$.
As we will see in Section 6, these subgroups  give rise to particular
 subextensions of the $\ell$-division field of an abelian variety defined over $\Q$, of dimension $g$.

We will view the elements of $\GSp_{2g}(\F_{\ell})$ using the following block matrix description
\[
\GSp_{2g}(\F_\ell)
=
\left\{
\begin{array}{lll}
                                   &  &-C^{t} A + A^{t} C = 0
\\
M = \begin{pmatrix} A & B \\  C & D \end{pmatrix} \in \GL_{2g}(\F_\ell): & 
A, B, C, D \in M_g(\F_\ell), & 
- C^{t}B+A^tD=\mu I_{g} \ \text{for some} \ \mu\in \F_{\ell}^{\times} 
\\
                                    & & -D^tB+B^tD=0
\end{array}\right\},
\]
where the element $\mu$ above is the multiplicator of the matrix $M$.

Before introducing the subgroups of $\GSp_{2g}(\F_\ell)$ 
needed in the proofs of our main results,
we introduce several 
subgroups of $\GL_{g}(\F_\ell)$, as follows:
 \begin{eqnarray*}
 \cal{B}_g(\F_\ell)
  &:=& 
   \left\{
  \begin{pmatrix} 
  a_{11}  & \cdots & a_{1g}\\
  &  \ddots & \vdots\\
  & & a_{gg}  
  \end{pmatrix}
  \in \GL_{g}(\F_\ell): 
  a_{ii} \in \F_{\ell}^{\times} \ \forall 1 \leq i \leq g,
  a_{ij}\in \F_{\ell} \ \forall 1\leq i<j \leq g \right\};
  \\
   \cal{U}_g(\F_\ell)
  &:=& 
   \left\{
  \begin{pmatrix} 
 1  & \cdots & a_{1g}\\
  &  \ddots & \vdots\\
  & & 1  
  \end{pmatrix}
  \in \GL_{g}(\F_\ell): 
  a_{ij}\in \F_{\ell} \ \forall  1 \leq i < j \leq g  \right\};
  \\
\cal{U}'_g(\F_\ell)
  &:=& 
   \left\{
  \begin{pmatrix} 
 \lambda  & \cdots & a_{1g}\\
  &  \ddots & \vdots\\
  & & \lambda 
  \end{pmatrix}
  \in \GL_{g}(\F_\ell): 
   \lambda \in \F_{\ell}^{\times},
  a_{ij}\in \F_{\ell} \ \forall 1\leq i < j \leq g  \right\};
  \\
   \cal{T}_g(\F_\ell)
  &:=& 
   \left\{
  \begin{pmatrix} 
  a_{11}  & & \\
  &  \ddots & \\
  & & a_{gg}  
  \end{pmatrix}
  \in \GL_{g}(\F_\ell): 
  a_{ii} \in \F_{\ell}^{\times}
  \ \forall 1\leq i \leq g \right\}.
  \end{eqnarray*}
  
  For a matrix
  $ A=\begin{pmatrix} 
 \lambda  & \cdots & a_{1g}\\
  &  \ddots & \vdots\\
  & & \lambda 
  \end{pmatrix}\in  \cal{U}'_g(\F_\ell)$, we set 
  $
  d(A):= \lambda.
  $

Now we introduce four distinguished subgroups of $\GL_{2g}(\F_\ell)$:
 \begin{eqnarray*}
 B_{2g}(\F_\ell)
  &:=& 
   \left\{
  \begin{pmatrix} 
 A & \mu^{-1}AS\\
 0 & \mu (A^t)^{-1}
  \end{pmatrix}
  \in \GL_{2g}(\F_\ell): 
  A \in \cal{B}_g(\F_\ell),
  \mu \in \F_{\ell}^{\times}, S \in M_g(\F_\ell) \ \text{symmetric} \right\};
  \\
   U_{2g}(\F_\ell)
  &:=& 
   \left\{
  \begin{pmatrix} 
 A  & AS\\
 0 & (A^t)^{-1}\\
  \end{pmatrix}
  \in \GL_{2g}(\F_\ell): 
  A \in \cal{U}_g(\F_\ell),
 S \in M_g(\F_\ell) \ \text{symmetric}\right\};
 \\
U'_{2g}(\F_\ell)
 &:=& 
  \left\{
  \begin{pmatrix} 
 A & \mu^{-1}AS\\
 0 & \mu (A^t)^{-1}
 \end{pmatrix}
 \in \GL_{2g}(\F_\ell): 
 A \in \cal{U}'_g(\F_\ell), \mu= d(A)^2, 
 S \in M_g(\F_\ell) \ \text{symmetric}\right\};
  \\
   T_{2g}(\F_\ell)
  &:=& 
   \left\{
  \begin{pmatrix} 
 A  & 0 \\
 0 &  \mu (A^t)^{-1} 
  \end{pmatrix}
  \in \GL_{2g}(\F_\ell): 
  A \in \cal{T}_g(\F_\ell),
  \mu\in \F_{\ell}^{\times} \right\}.
  \end{eqnarray*}

For the rest of the section, we focus on properties of these subgroups
that will be used in the proofs of Theorems \ref{main-thm1} - \ref{main-thm3}.

\begin{proposition}\label{groups-normal}
\
\begin{enumerate}
\item[(i)] $B_{2g}(\F_\ell)$ is a subgroup of $\GSp_{2g}(\F_\ell)$.
\item[(ii)] 
$U_{2g}(\F_\ell)$ is a normal subgroup of $B_{2g}(\F_\ell)$.
\item[(iii)]
 $U'_{2g}(\F_\ell)$  is a normal subgroup of $B_{2g}(\F_\ell)$.
\end{enumerate}
\end{proposition}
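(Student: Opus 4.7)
For part (i), I first check that $M = \begin{pmatrix} A & \mu^{-1}AS \\ 0 & \mu (A^t)^{-1} \end{pmatrix}$ satisfies $M^t J_{2g} M = \mu J_{2g}$. Three of the four block identities are immediate; the nontrivial $(2,2)$-identity reduces to $S^t - S = 0$, which uses exactly the hypothesis $S = S^t$. Invertibility of $M$ is automatic since $M$ is block upper triangular with invertible diagonal blocks, so $M \in \GSp_{2g}(\F_\ell)$. For closure under multiplication, computing $M_1 M_2$ in template form yields $(1,1)$-block $A_1 A_2 \in \cal{B}_g(\F_\ell)$, $(2,2)$-block $\mu_1\mu_2((A_1 A_2)^t)^{-1}$, and a $(1,2)$-block that rearranges to $(\mu_1\mu_2)^{-1} A_1 A_2 S_{12}$ with $S_{12} := \mu_1 S_2 + \mu_2^2 A_2^{-1} S_1 (A_2^t)^{-1}$, symmetric since $(A_2^{-1} S_1 (A_2^t)^{-1})^t = A_2^{-1} S_1 (A_2^t)^{-1}$. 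Solving $MM^{-1} = I_{2g}$ directly gives $M^{-1} = \begin{pmatrix} A^{-1} & -\mu^{-2} S A^t \\ 0 & \mu^{-1} A^t \end{pmatrix}$, which fits the template with data $(A^{-1}, \mu^{-1}, -\mu^{-3} ASA^t)$, the last being symmetric.

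For parts (ii) and (iii), the same product and inverse formulas restrict to $U_{2g}(\F_\ell)$ and $U'_{2g}(\F_\ell)$: in case (ii) the constraints $\mu_i = 1$ and $A_i \in \cal{U}_g(\F_\ell)$ are preserved under products and inverses; in case (iii), writing $A_i = \lambda_i I_g + N_i$ with $N_i$ strictly upper triangular yields $d(A_1 A_2) = \lambda_1 \lambda_2$ and $d(A_i^{-1}) = \lambda_i^{-1}$, so the constraint $\mu = d(A)^2$ persists. For normality, I conjugate $U$ in $U_{2g}(\F_\ell)$ (respectively $U'_{2g}(\F_\ell)$) by $M \in B_{2g}(\F_\ell)$: the block-matrix formulas give $MUM^{-1}$ with $(1,1)$-block $AA'A^{-1}$, multiplicator $\mu \mu' \mu^{-1} = \mu'$, and an automatically symmetric $(1,2)$-block. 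For part (ii), $\cal{U}_g(\F_\ell)$ is normal in $\cal{B}_g(\F_\ell)$ (the unipotent radical of a Borel), so $AA'A^{-1} \in \cal{U}_g(\F_\ell)$, and $\mu' = 1$ is preserved. For part (iii), writing $A' = \lambda' I_g + N'$ gives $AA'A^{-1} = \lambda' I_g + AN'A^{-1}$ with $AN'A^{-1}$ strictly upper triangular (since $A \in \cal{B}_g(\F_\ell)$ is upper triangular), so $AA'A^{-1} \in \cal{U}'_g(\F_\ell)$ with $d(AA'A^{-1}) = \lambda' = d(A')$, and hence $\mu' = d(A')^2 = d(AA'A^{-1})^2$, confirming $MUM^{-1} \in U'_{2g}(\F_\ell)$.

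All the work is essentially bookkeeping. The main hurdle is deriving the symmetric datum $S_{12}$ in the product formula with its correct multiplicator scaling; once this is in hand, parts (ii) and (iii) reduce cleanly to the standard normality facts inside $\cal{B}_g(\F_\ell)$.
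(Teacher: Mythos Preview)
Your proof is correct and follows essentially the same block-matrix computation as the paper: verify the symplectic condition directly, check closure, and for normality compute the conjugate and invoke $\cal{U}_g(\F_\ell) \unlhd \cal{B}_g(\F_\ell)$. The only notable difference is in part (iii): the paper deduces normality of $U'_{2g}(\F_\ell)$ in one line from $U_{2g}(\F_\ell) \leq U'_{2g}(\F_\ell)$ (implicitly using that $B_{2g}(\F_\ell)/U_{2g}(\F_\ell)\simeq T_{2g}(\F_\ell)$ is abelian, so every intermediate subgroup is normal), whereas you argue directly by showing $d(AA'A^{-1}) = d(A')$; your route is slightly more self-contained.
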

\begin{proof}

(i)
Let
\[
M =
\begin{pmatrix}
A' & \mu^{-1}A'S \\
0 & \mu(A'^t)^{-1}
\end{pmatrix}\in B_{2g}(\F_\ell).
\]
Using
the block matrix description of $\GSp_{2g}(\F_{\ell})$ and 
 taking
\[
A=A',\ B=\mu^{-1}A'S, \ C=0, \ D=\mu (A'^t)^{-1},
\] 
we can verify that 
\[
-C^{t} A + A^{t} C = 0, \ - C^{t}B+A^tD=\mu I_{g}, \  -D^tB+B^tD=0.
\]
This shows that
$M\in \GSp_{2g}(\F_{\ell})$. 

Note that for any  matrices
\[
M_1 =
\begin{pmatrix}
A_1 & \mu_1^{-1}A_1S_1 \\
0 & \mu_1(A_1^t)^{-1}
\end{pmatrix}, \ 
M_2 =
\begin{pmatrix}
A_2 & \mu_2^{-1}A_2S_2 \\
0 & \mu_2(A_2^t)^{-1}
\end{pmatrix}
\in B_{2g}(\F_\ell),
\]
we have 
\begin{align*}
M_1M_2^{-1}
&=
\begin{pmatrix}
A_1 & \mu_1^{-1}A_1S_1 \\
0 & \mu_1(A_1^t)^{-1}
\end{pmatrix}
 \begin{pmatrix}
A_2^{-1} & -\mu_2^{-2} S_2 A_2^t \\
0 & \mu_2^{-1} A_2^t
\end{pmatrix} \\
&=
 \begin{pmatrix}
A_1A_2^{-1} & -\mu_2^{-2} A_1S_2 A_2^t+\mu_1^{-1}\mu_2^{-1}A_1SA_2^t \\
0 & \mu_1\mu_2^{-1} ((A_1A_2^{-1})^{t})^{-1}
\end{pmatrix}.
\end{align*}
Then, by setting 
\[
\mu_3 := \mu_1\mu_2^{-1}, \ A_3 := A_1A_2^{-1}, \ S_3 := (-\mu_1\mu_2^{-3}+\mu_2^{-2})A_2S_2A_2^t,
\]
we see that 
\begin{align*}
M_1M_2^{-1}
=
\begin{pmatrix}
A_3 & \mu_3^{-1}A_3S_3 \\
0 & \mu_3(A_3^t)^{-1}
\end{pmatrix} \in B_{2g}(\F_{\ell}).
\end{align*}
This shows that
$B_{2g}(\F_{\ell})$ is 
a group, hence a subgroup of $\GSp_{2g}(\F_{\ell})$.

\medskip

(ii), (iii)
It follows from an  argument 
similar to the one used
in the proof of  (i) that both $U_{2g}(\F_\ell)$ and $U'_{2g}(\F_\ell)$ are subgroups of $B_{2g}(\F_\ell)$.
Note that, for any  matrices
\[
M_1 = 
\begin{pmatrix}
A_1 & \mu^{-1}A_1 S_1 \\
0 & \mu(A_1^t)^{-1}
\end{pmatrix}\in B_{2g}(\F_\ell), \quad 
M_2 = 
\begin{pmatrix} 
 A_2 & A_2 S_2\\
 0 & (A_2^t)^{-1}\\
  \end{pmatrix}
\in U_{2g}(\F_\ell), 
\]
we have
\begin{align*}
M_1^{-1} M_2 M_1
&=
 \begin{pmatrix}
A_1^{-1} & -\mu_1^{-2} S_1 A_1^t \\
0 & \mu_1^{-1} A_1^t
\end{pmatrix} 
\begin{pmatrix} 
 A_2  & A_2 S_2\\
 0 & (A_2^t)^{-1}\\
  \end{pmatrix}
  \begin{pmatrix}
A_1 & \mu_1^{-1} A_1 S_1 \\
0 & \mu_1 (A_1^t)^{-1}
\end{pmatrix} \\
&=
 \begin{pmatrix}
A_1^{-1} A_2 A_1 & \mu_1^{-1}A_1^{-1} A_2 A_1 S_1 +  \mu_1 A_1^{-1} A_2 S_2 (A_1^t)^{-1} - \mu_1^{-1} S_1 A_1^t (A_2^t)^{-1} (A_1^t)^{-1} \\
0 & A_1^t (A_2^{t})^{-1} (A_1^t)^{-1} 
\end{pmatrix} .
\end{align*}
Since $S_1$ is symmetric and since $\cal{U}_g(\ell) \unlhd \cal{B}_g(\F_\ell)$, 
we obtain that $ A_1^{-1} A_2 A_1 \in \cal{U}_g(\F_\ell)$. 
Then, by setting
\[
A_3 :=  A_1^{-1} A_2 A_1, 
\
S_3:=\mu_1^{-1} S_1 + \mu_1 A_1^{-1} S_2 (A_1^t)^{-1} - \mu_1 A_3^{-1} S_1  (A_3^t)^{-1},
\]

we obtain that
\[
M_1^{-1} M_2 M_1
= 
 \begin{pmatrix}
A_3 & A_3 S_3\\
0 & (A_3^t)^{-1}
\end{pmatrix} \in U_{2g}(\F_\ell).
\]
This proves that $U_{2g}(\F_\ell)$ is a normal subgroup of $B_{2g}(\F_\ell)$. 
Since $U_{2g}(\F_\ell) \leq U'_{2g}(\F_\ell)$, 
we deduce  that $U'_{2g}(\F_\ell)$ is a normal subgroup of $B_{2g}(\F_\ell)$.
\end{proof}

\begin{proposition}\label{counting-G(l)-etc}
The following formulae hold:
\begin{eqnarray*}
\#\GSp_{2g}(\F_\ell)
&=& 
(\ell-1)\prod_{1 \leq i \leq g} (\ell^{2i}-1)\ell^{2i-1}; 
 \\
 \#\PGSp_{2g}(\F_\ell)
&=& 
\prod_{1 \leq i \leq g} (\ell^{2i}-1)\ell^{2i-1}; 
 \\
 \#\cal{B}_g(\F_\ell)
 &=& 
 \ell^{\frac{g(g-1)}{2}}(\ell-1)^g;
 \\
  \#\cal{U}_g(\F_\ell)
  &=& 
  \ell^{\frac{g(g-1)}{2}};
 \\
  \#\cal{U}'_g(\F_\ell)
  &=& 
  \ell^{\frac{g(g-1)}{2}}(\ell-1);
  \\
\#\cal{T}_{g}(\F_\ell)
 &=& 
 (\ell-1)^g;
 \\   
 \#B_{2g}(\F_\ell)
 &= &
 \ell^{g^2}(\ell-1)^{g+1};
 \\
 \#U_{2g}(\F_\ell)
 &=&
 \ell^{g^2};
 \\
 \#U'_{2g}(\F_\ell)
 &=&
 \ell^{g^2}(\ell-1);
 \\
 \#T_{2g}(\F_\ell)
 &= & (\ell-1)^{g+1}.
\end{eqnarray*}
\end{proposition}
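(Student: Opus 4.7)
The plan is to treat the ten formulas in three groups according to how much structure is involved.

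First, the seven formulas for $\cal{B}_g(\F_\ell)$, $\cal{U}_g(\F_\ell)$, $\cal{U}'_g(\F_\ell)$, $\cal{T}_g(\F_\ell)$, $U_{2g}(\F_\ell)$, $U'_{2g}(\F_\ell)$ and $T_{2g}(\F_\ell)$ reduce to elementary parameter counts. Reading the definitions in Section \ref{linear-AG}, the diagonal of an element of $\cal{B}_g(\F_\ell)$ contributes $(\ell-1)^g$, while the $g(g-1)/2$ strictly upper triangular entries are free in $\F_\ell$; the remaining four subgroups of $\GL_g(\F_\ell)$ are analogous. For the block subgroups of $\GL_{2g}(\F_\ell)$, I would use that the set of symmetric matrices in $M_g(\F_\ell)$ has cardinality $\ell^{g(g+1)/2}$, so that each element of $U_{2g}(\F_\ell)$, $U'_{2g}(\F_\ell)$ or $T_{2g}(\F_\ell)$ is determined by independent choices of its $A$-block, its symmetric $S$-block (where applicable) and, for $T_{2g}$, a scalar $\mu\in\F_\ell^\times$; multiplying and simplifying the exponents yields the stated formulas. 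In particular, $\#U_{2g}(\F_\ell)=\ell^{g(g-1)/2}\cdot\ell^{g(g+1)/2}=\ell^{g^2}$.

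Next, for $\#B_{2g}(\F_\ell)$, I would observe that an element is determined by $(A,S,\mu)\in \cal{B}_g(\F_\ell)\times\mathrm{Sym}_g(\F_\ell)\times\F_\ell^\times$, these three parameters being independent of one another (this is immediate from the block shape and the freedom of $S$ and $\mu$). Multiplying the three cardinalities gives
\[
\#B_{2g}(\F_\ell)=\ell^{g(g-1)/2}(\ell-1)^g\cdot\ell^{g(g+1)/2}\cdot(\ell-1)=\ell^{g^2}(\ell-1)^{g+1}.
\]

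The one genuinely nontrivial formula is that for $\#\GSp_{2g}(\F_\ell)$, which I would derive from the well-known order
\[
\#\Sp_{2g}(\F_\ell)=\prod_{i=1}^{g}(\ell^{2i}-1)\ell^{2i-1}
\]
via the short exact sequence
\[
1\longrightarrow\Sp_{2g}(\F_\ell)\longrightarrow \GSp_{2g}(\F_\ell)\xrightarrow{\ \mathrm{mult}\ }\F_\ell^\times\longrightarrow 1,
\]
where the second map sends a matrix to its multiplicator, which is surjective because $\mathrm{diag}(\mu I_g,I_g)\in\GSp_{2g}(\F_\ell)$ has multiplicator $\mu$ for any $\mu\in\F_\ell^\times$. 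Taking cardinalities gives the stated formula for $\#\GSp_{2g}(\F_\ell)$. Finally, since $\PGSp_{2g}(\F_\ell)=\GSp_{2g}(\F_\ell)/\Lambda_{2g}(\F_\ell)$ and $\Lambda_{2g}(\F_\ell)\simeq\F_\ell^\times$ has order $\ell-1$, dividing yields $\#\PGSp_{2g}(\F_\ell)$.

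The main obstacle is justifying $\#\Sp_{2g}(\F_\ell)=\prod_{i=1}^{g}(\ell^{2i}-1)\ell^{2i-1}$, which is classical: one proves it by induction on $g$ using Witt's extension theorem, counting first the $\ell^{2g}-1$ choices of a nonzero vector $v_1\in\F_\ell^{2g}$, then the $\ell^{2g-1}$ choices of a symplectic partner $w_1$ with $e(v_1,w_1)=1$, and applying the induction hypothesis to the symplectic complement $\langle v_1,w_1\rangle^\perp\simeq\F_\ell^{2(g-1)}$. I would quote this standard fact rather than re-prove it here.
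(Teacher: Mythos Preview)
Your proposal is correct and matches the paper's approach: the paper likewise cites the order of $\GSp_{2g}(\F_\ell)$ (and hence $\PGSp_{2g}(\F_\ell)$) as well-known, and derives the remaining eight formulas by the same direct parameter counts on the block entries, using $\#\mathrm{Sym}_g(\F_\ell)=\ell^{g(g+1)/2}$. Your short-exact-sequence derivation of $\#\GSp_{2g}(\F_\ell)$ from $\#\Sp_{2g}(\F_\ell)$ is a standard justification of the cited fact and introduces no new ideas beyond what the paper invokes.
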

\begin{proof}
The formulae for $\#\GSp_{2g}(\F_\ell)$
 and $\#\PGSp_{2g}(\F_\ell)$ are well-known 
  (for example, see \cite[Theorem 3.1.2, p. 35]{O'Me78}). 
The formulae for the orders of $\cal{B}_g(\F_{\ell})$, $\cal{U}_g(\F_\ell)$, $\cal{U}'_g(\F_\ell)$ and $\cal{T}_g(\F_\ell)$ are clear from the definitions of the groups. 
The formulae for the orders of $B_{2g}(\F_\ell)$, $U_{2g}(\F_\ell)$, $U'_{2g}(\F_\ell)$ and $T_{2g}(\F_\ell)$ can be obtained from the definitions of the groups by counting the number of each block matrices, as follows:
\begin{eqnarray*}
\#B_{2g}(\F_\ell)
&=&
\#\cal{B}_g(\F_\ell)
\cdot
\#(\Z/\ell\Z)^{\times}
\cdot
\ell^{\frac{g(g+1)}{2}}= \ell^{g^2}(\ell-1)^{g+1};
\\
\#U_{2g}(\F_\ell)
&=&
\#\cal{U}_g(\F_\ell)
\cdot
\ell^{\frac{g(g+1)}{2}}
= 
\ell^{g^2};
\\
\#U'_{2g}(\F_\ell)
&=&
\#\cal{U}_g(\F_\ell)
\cdot
\#(\Z/\ell\Z)^{\times}
\cdot
\ell^{\frac{g(g+1)}{2}}
=  
\ell^{g^2}(\ell-1);
\\
\#T_{2g}(\F_\ell)
&=&
\#\cal{T}_g(\F_\ell)
\cdot
\#(\Z/\ell\Z)^{\times}
= 
(\ell-1)^{g+1}.
\end{eqnarray*}
\end{proof}

\begin{proposition}\label{conj-number}
The following upper bounds hold:
\begin{eqnarray*}
\# \GSp_{2g}(\F_{\ell})^{\#} 
&\ll_g&
 \ell^{g+1}, \\
 \# \PGSp_{2g}(\F_{\ell})^{\#} 
&\ll_g&
 \ell^{g}.
\end{eqnarray*}
\end{proposition}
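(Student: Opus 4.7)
The plan is to bound $\# \GSp_{2g}(\F_\ell)^{\#}$ by first counting the distinct characteristic polynomials of elements of $\GSp_{2g}(\F_\ell)$ and then bounding the number of conjugacy classes sharing a given characteristic polynomial. For $M \in \GSp_{2g}(\F_\ell)$, the defining identity $M^t J_{2g} M = \mu(M) J_{2g}$ gives $J_{2g} M J_{2g}^{-1} = \mu(M) (M^t)^{-1}$, so $M$ and $\mu(M) M^{-1}$ are $\GL_{2g}(\F_\ell)$-conjugate and share the characteristic polynomial. This yields $p_M(x) = \det(M)^{-1} x^{2g} p_M(\mu(M)/x)$; writing $p_M(x) = \sum_{k=0}^{2g} b_k(M) x^k$ with $b_{2g}(M) = 1$, the identity translates into
\[
b_k(M) = \mu(M)^{g-k} b_{2g-k}(M), \qquad 0 \leq k \leq 2g,
\]
which forces $b_0(M) = \mu(M)^g$ and expresses $b_{g+1}(M), \ldots, b_{2g-1}(M)$ in terms of $\mu(M), b_1(M), \ldots, b_{g-1}(M)$. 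Hence $p_M$ is determined by the $(g+1)$-tuple $(\mu(M), b_1(M), \ldots, b_g(M)) \in \F_\ell^\times \times \F_\ell^g$, so the number of distinct characteristic polynomials is at most $(\ell-1) \ell^g$.

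Next, I will show that for any polynomial $p(x) \in \F_\ell[x]$ of degree $2g$, the number of $\GSp_{2g}(\F_\ell)$-conjugacy classes with characteristic polynomial $p$ is bounded by a constant $c_g$ depending only on $g$. This rests on two inputs: (a) the number of $\GL_{2g}(\F_\ell)$-similarity classes with characteristic polynomial $p$ is uniformly bounded in $\ell$, by the theory of rational canonical form, since these classes correspond to tuples of partitions attached to the irreducible factors of $p$; and (b) each $\GL_{2g}(\F_\ell)$-similarity class meeting $\GSp_{2g}(\F_\ell)$ breaks into $O_g(1)$ many $\GSp_{2g}(\F_\ell)$-conjugacy classes. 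Granting (a) and (b), combining with the preceding count yields $\# \GSp_{2g}(\F_\ell)^{\#} \leq c_g (\ell-1) \ell^g \ll_g \ell^{g+1}$.

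For the projective bound, since $\PGSp_{2g}(\F_\ell) = \GSp_{2g}(\F_\ell)/\Lambda_{2g}(\F_\ell)$ with $\Lambda_{2g}(\F_\ell) \cong \F_\ell^\times$ central, each $\PGSp_{2g}(\F_\ell)$-conjugacy class is the image of an $\F_\ell^\times$-orbit on $\GSp_{2g}(\F_\ell)^{\#}$ under $C \mapsto \lambda C$. Since $M$ and $\lambda M$ have multiplicators $\mu(M)$ and $\lambda^2 \mu(M)$ respectively, any $\lambda \in \F_\ell^\times$ stabilizing a conjugacy class $C$ must satisfy $\lambda^2 = 1$, so every orbit has size at least $(\ell-1)/2$. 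Hence $\# \PGSp_{2g}(\F_\ell)^{\#} \leq 2 \cdot \# \GSp_{2g}(\F_\ell)^{\#}/(\ell-1) \ll_g \ell^g$.

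The main obstacle is point (b) above: showing that passing from $\GL_{2g}(\F_\ell)$-similarity to $\GSp_{2g}(\F_\ell)$-conjugacy only multiplies the count by $O_g(1)$. The cleanest route is via Lang's theorem applied to the centralizer $Z_{\GSp_{2g}}(M)$ in the algebraic group $\GSp_{2g}/\overline{\F_\ell}$: the number of $\GSp_{2g}(\F_\ell)$-conjugacy classes within a fixed geometric $\GL_{2g}$-orbit meeting $\GSp_{2g}(\F_\ell)$ is governed by the Frobenius cohomology of the component group $\pi_0(Z_{\GSp_{2g}}(M))$, whose order is bounded in terms of $g$ (semisimple elements have a connected centralizer and yield a single class, while unipotent or mixed elements contribute component groups of bounded order arising from the Jordan-type combinatorics).
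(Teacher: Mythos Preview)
Your argument for $\PGSp_{2g}$ is essentially the paper's: both use the orbit--stabilizer theorem on the $\F_\ell^\times$-action on $\GSp_{2g}(\F_\ell)^{\#}$. You bound the stabilizer via the multiplicator ($\lambda^2=1$, so at most $2$ elements), whereas the paper bounds it via the determinant ($b^{2g}=1$, so at most $2g$ elements); your bound is sharper but either suffices.

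For $\GSp_{2g}$ the paper gives no argument and simply cites Wall \cite{Wa63} and Gallagher \cite{Ga70}. Your direct approach---counting characteristic polynomials and then bounding the fiber---is a genuine alternative and is correct in outline. The count of characteristic polynomials and step~(a) are fine. The imprecision is in step~(b): Lang's theorem controls the number of $\GSp_{2g}(\F_\ell)$-classes inside a single \emph{geometric $\GSp_{2g}$-orbit} (via $H^1(\Frob,\pi_0(Z_{\GSp_{2g}}(M)))$), not inside a geometric $\GL_{2g}$-orbit. You therefore also need that a $\GL_{2g}(\overline{\F_\ell})$-orbit meeting $\GSp_{2g}$ decomposes into $O_g(1)$ many $\GSp_{2g}(\overline{\F_\ell})$-orbits. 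This is true---over an algebraically closed field the symplectic conjugacy class of $M\in\GSp_{2g}$ is determined by its elementary divisors together with a bounded amount of extra data (see Wall's classification, or the Springer--Steinberg theory of centralizers)---but it is a separate ingredient from Lang's theorem and should be stated as such. Once you separate these two steps, both the component-group bound and the geometric-orbit bound follow from the structure theory of centralizers in $\GSp_{2g}$, and your argument goes through.
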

\begin{proof}
The upper bound for $\#\GSp_{2g}(\F_{\ell})^{\#}$ can be obtained from \cite[(iii), p. 36]{Wa63} and \cite[(2), p.1]{Ga70}. 
To obtain an upper bound for $\#\PGSp_{2g}(\F_{\ell})$, we proceed as follows.
Note that each conjugacy class in $\PGSp_{2g}(\F_{\ell})$ may be viewed as the equivalence class of $\Lambda_{2g}(\F_{\ell})$-orbits of conjugacy classes in $\GSp_{2g}(\F_{\ell})^{\#}$. 
Now fix an arbitrary element 
 $\cal{C}\in \GSp_{2g}(\F_{\ell})^{\#}$. If there is an element $b\in \F_{\ell}^{\times}$ such that $(bI_{2g}) \cal{C}=\cal{C}$, 
 then, by comparing  determinants, we have
$b^{2g}=1$. So $b$ may take at most $2g$ elements.
 By the orbit-stabilizer theorem in group theory, each  $\Lambda_{2g}(\F_{\ell})$-orbit 
 of $\GSp_{2g}(\F_{\ell})^{\#}$ contains at least 
 $\frac{\# \Lambda_{2g}(\F_{\ell})}{2g}$ conjugacy classes. Therefore, 
\[
\# \PGSp_{2g}(\F_{\ell})^{\#} \leq \frac{\#\GSp_{2g}(\F_{\ell})^{\#}}{\frac{\# \F_{\ell}^{\times}}{2g}} \ll_g \ell^g.
\] 
\end{proof}

\begin{proposition}\label{groups-abelian}
The groups $B_{2g}(\F_\ell)/U_{2g}(\F_\ell)$ and $T_{2g}(\F_\ell)$ are isomorphic.
Consequently,  the quotient groups $B_{2g}(\F_\ell)/U_{2g}(\F_\ell)$ and $B_{2g}(\F_\ell)/U'_{2g}(F_\ell)$ are abelian. 
\end{proposition}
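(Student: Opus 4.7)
The plan is to prove the first isomorphism by constructing an explicit surjective homomorphism $\varphi : B_{2g}(\F_\ell) \to T_{2g}(\F_\ell)$ whose kernel is $U_{2g}(\F_\ell)$, and then to apply the First Isomorphism Theorem. For a matrix $A = (a_{ij}) \in \cal{B}_g(\F_\ell)$, write $D(A) := \diag(a_{11}, \ldots, a_{gg}) \in \cal{T}_g(\F_\ell)$ for its diagonal part. I would define
\[
\varphi\!\left(
\begin{pmatrix}
A & \mu^{-1}AS \\
0 & \mu(A^t)^{-1}
\end{pmatrix}
\right)
:=
\begin{pmatrix}
D(A) & 0 \\
0 & \mu\, D(A)^{-1}
\end{pmatrix}.
\]
The codomain lies in $T_{2g}(\F_\ell)$ because $D(A)$ is diagonal and $\mu D(A)^{-1} = \mu (D(A)^t)^{-1}$.

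The key step is verifying that $\varphi$ is a homomorphism. Using the multiplication formula for two elements of $B_{2g}(\F_\ell)$ (which is essentially the computation already performed in Proposition \ref{groups-normal}(i), showing closure), the upper-left block of the product is $A_1 A_2$ and the scalar in the multiplicator position is $\mu_1 \mu_2$. Since $A_1, A_2$ are upper triangular, one has the elementary identity $D(A_1 A_2) = D(A_1) D(A_2)$, which immediately gives $\varphi(M_1 M_2) = \varphi(M_1)\varphi(M_2)$. Surjectivity is obvious: any element of $T_{2g}(\F_\ell)$ already lies in $B_{2g}(\F_\ell)$ with $S = 0$, and is sent to itself.

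Next I would identify the kernel. An element $M = \begin{pmatrix} A & \mu^{-1}AS \\ 0 & \mu(A^t)^{-1}\end{pmatrix}$ lies in $\ker \varphi$ precisely when $D(A) = I_g$ and $\mu D(A)^{-1} = I_g$, i.e.\ $A \in \cal{U}_g(\F_\ell)$ and $\mu = 1$. Substituting these conditions recovers exactly the defining form of $U_{2g}(\F_\ell)$, so $\ker \varphi = U_{2g}(\F_\ell)$. The First Isomorphism Theorem then yields $B_{2g}(\F_\ell)/U_{2g}(\F_\ell) \cong T_{2g}(\F_\ell)$, and since $T_{2g}(\F_\ell)$ is abelian (its elements are block-diagonal matrices whose blocks are diagonal, hence commute), the quotient $B_{2g}(\F_\ell)/U_{2g}(\F_\ell)$ is abelian. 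Finally, for the quotient by $U'_{2g}(\F_\ell)$: since $U_{2g}(\F_\ell) \leq U'_{2g}(\F_\ell)$ and both are normal in $B_{2g}(\F_\ell)$ by Proposition \ref{groups-normal}(ii)--(iii), the third isomorphism theorem gives $B_{2g}(\F_\ell)/U'_{2g}(\F_\ell)$ as a quotient of $B_{2g}(\F_\ell)/U_{2g}(\F_\ell)$, and hence also abelian.

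I expect no serious obstacle here: the proof is essentially bookkeeping. The only point requiring care is the homomorphism check, where one must exploit the upper-triangularity of $A$ to get $D(A_1 A_2) = D(A_1) D(A_2)$ and to observe that the cross term appearing in the top-right block of $M_1 M_2$ is irrelevant because $\varphi$ forgets it. Everything else follows from standard quotient-group formalism.
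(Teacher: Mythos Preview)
Your proof is correct. It differs from the paper's argument, which goes in the opposite direction: rather than building a surjection $B_{2g}(\F_\ell)\twoheadrightarrow T_{2g}(\F_\ell)$ with kernel $U_{2g}(\F_\ell)$, the paper considers the composition
\[
T_{2g}(\F_\ell)\hookrightarrow B_{2g}(\F_\ell)\twoheadrightarrow B_{2g}(\F_\ell)/U_{2g}(\F_\ell),
\]
observes (implicitly via $T_{2g}(\F_\ell)\cap U_{2g}(\F_\ell)=\{I_{2g}\}$) that it is injective, and then invokes the order formulae from Proposition~\ref{counting-G(l)-etc} to conclude it is a bijection, hence an isomorphism. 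The abelianness of the two quotients is then deduced exactly as you do. The paper's route is shorter because the order computations are already in hand and no multiplication check is needed; your route is more self-contained, since the explicit map $\varphi$ and the identity $D(A_1A_2)=D(A_1)D(A_2)$ for upper-triangular matrices make the isomorphism visible without appealing to cardinalities.
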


\begin{proof}
By comparing the orders of 
$B_{2g}(\F_\ell)/U_{2g}(\F_\ell)$
and
$T_{2g}(\F_\ell)$,
we deduce that the composition  
$T_{2g}(\F_\ell) \hookrightarrow B_{2g}(\F_\ell) \twoheadrightarrow B_{2g}(\F_\ell)/U_{2g}(\F_\ell)$ is an isomorphism. 
Then, 
recalling that
 $T_{2g}(\F_\ell)$  is abelian, 
 we obtain that
 $B_{2g}(\F_\ell)/U_{2g}(\F_\ell)$
 is abelian.
 Furthermore, by
  observing that 
$U_{2g}(\F_\ell) \leq U'_{2g}(\F_\ell)\leq B_{2g}(\F_\ell)$,
we deduce that 
$B_{2g}(\F_\ell)/U'_{2g}(\F_\ell)$ is also abelian. 
\end{proof}

\section{Distinguished conjugacy classes in the general symplectic groups over a finite prime field}

As in the  previous section, we fix an arbitrary integer $g \geq 1$ and an arbitrary
rational prime $\ell$.
This time, we turn our attention to particular unions of conjugacy classes in $\GSp_{2g}(\F_{\ell})$.

First,
we
 recall that a matrix $M \in M_{2g}(\F_{\ell})$ is called semisimple if its minimal polynomial 
over $\F_{\ell}$
has distinct roots in ${\overline{\F}}_{\ell}$, which is equivalent to the existence of a matrix $N \in \GL_{2g}(\overline{\F}_{\ell})$ such that $N M N^{-1}$ is a diagonal matrix in $M_{2g}(\overline{\F}_{\ell})$.
Next,
we
recall that the eigenvalues of $M$ are the roots of the characteristic polynomial 
$\car_{M}(X) := \det (X I_{2g} - M) \in \F_{\ell}[X]$ of $M$. 
Finally, we recall that,
when $M \in \GSp_{2g}(\F_{\ell})$, its characteristic polynomial $\car_M(X)$ 
has
the form
\begin{equation*}
\car_M(X) = X^{2g} + b_1 X^{2g-1}+ \ldots + b_g X^g + \mu b_{g-1} X^{g-1}+ \ldots + \mu^{g-1} b_1X + \mu^g\in \F_{\ell}[X],
\end{equation*}
where $\mu$ is the multiplicator of $M$. 
Therefore, upon
factoring $\car_M(X)$ over 
$\overline{\F}_{\ell}$,
 we obtain that
there exist $\lambda_{1}(M), \ldots, \lambda_{g}(M) \in \overline{\F}_{\ell}$ such that
\begin{equation*}
\car_{M}(X) = \ds\prod_{1\leq i\leq g}(X-\lambda_i(M)) (X - \mu \lambda_i^{-1}(M)) \in \overline{\F}_{\ell}[X].
\end{equation*}

With notation  as above, we introduce the following subsets associated to  $\GSp_{2g}(\F_{\ell})$,
an integer $t$, and a positive real number $z$:

\begin{eqnarray*}
\cal{C}_0(\ell, t)
&:=& 
\left\{
 M \in \GSp_{2g}(\F_\ell):  
 \ \tr M =  -t (\mod \ell)
\right\};
\\
\cal{C}(\ell, t)
&:=& 
\left\{
 M \in \GSp_{2g}(\F_\ell):  
\lambda_{i}(M) \in \F_{\ell}^{\times} \ 1\leq i\leq g, \ \tr M =  -t (\mod \ell)
\right\};
\\
\cal{C}^{ss}(\ell, t)
&:=& 
\left\{
M \in \cal{C}(\ell, t): M \ \text{semisimple}
\right\};
\\
\cal{C}_{B}(\ell, t) 
&:=& 
{\cal{C}}(\ell, t) \cap B_{2g}(\F_\ell);
\\
\widehat{ \cal{C}_0(\ell, 0)}
&:=&
\; \text{the image of $\cal{C}_0(\ell, 0)$ in  $\PGSp_{2g}(\F_\ell)$};
\\
\widehat{ \cal{C}_{B}(\ell, t)} 
&:=&
\; \text{the image of $\cal{C}_{B}(\ell, t)$ in  $B_{2g}(\F_\ell)/U_{2g}(\F_\ell)$};
\\
\widehat{\cal{C}'_{B}(\ell, t)}
&:=&
\; \text{the image of $\cal{C}_{B}(\ell, t)$ in  $B_{2g}(\F_\ell)/U'_{2g}(\F_\ell)$};
\\
\cal{C}^{ss}(\ell, |t|\leq z)  
&:=&
\; \bigcup_{\substack{t\in \Z\\|t|\leq z}}\cal{C}^{ss}(\ell, t);
\\
\cal{C}(\ell, |t|\leq z)  
&:=&
\; \bigcup_{\substack{t\in \Z\\|t|\leq z}}\cal{C}(\ell, t);
\\
\cal{C}_{B}(\ell, |t|\leq z) 
&:= &
\; \cal{C}(\ell, |t|\leq z) \cap B_{2g}(\F_\ell);
\\
\reallywidehat{\cal{C}_B(\ell, |t|\leq z)} 
&:= &
\;   \text{the image of $\cal{C}_{B}(\ell, |t|\leq z)$ in  $B_{2g}(\F_\ell)/U_{2g}(\F_\ell)$}.
\end{eqnarray*}

For the rest of the section, we focus on  
properties of these subsets 
that
 will be used in the proofs of 
Theorems \ref{main-thm1} - \ref{main-thm3}.

The arguments for Proposition \ref{propr-conj-sets} are straightforward and included for completeness; this proposition ensures that
Theorem \ref{functorial-quotient-group} can be applied in our setting. 
\begin{proposition}\label{propr-conj-sets}
Assume that the  prime $\ell$ satisfies  $\ell\nmid 2g$.
Given any integer $t$ and any positive real number $z$ such that $|t|\leq z$, the following properties hold.
\begin{enumerate}
\item[(i)]
The sets $\cal{C}^{ss}(\ell, t)$ and $\cal{C}^{ss}(\ell, |t|\leq z)$
are
non-empty 
and 
stable under conjugation by elements of $\GSp_{2g}(\F_{\ell})$. 
\item[(ii)]
The sets $\cal{C}_{B}(\ell, t)$ and $\cal{C}_{B}(\ell, |t|\leq z)$
are non-empty 
and
stable under conjugation by elements of $B_{2g}(\F_{\ell})$. 
\item[(iii)]
Every element in $\cal{C}^{ss}(\ell, t) \cup \cal{C}^{ss}(\ell, |t|\leq z)$ 
is conjugate over $\GSp_{2g}(\F_\ell)$ to some element of $B_{2g}(\F_\ell)$. 
\item[(iv)]
We have the set inclusions
$$U_{2g}(\F_\ell) \ {\cal{C}}_{B}(\ell, t) = \cal{C}_{B} (\ell, t);$$ 
$$U'_{2g}(\F_\ell) \ {\cal{C}}_{B}(\ell, 0) = \cal{C}_{B} (\ell, 0);$$
$$U_{2g}(\F_\ell) \ {\cal{C}}_{B}(\ell, |t|\leq z) = \cal{C}_{B} (\ell, |t|\leq z).$$
\end{enumerate}
\end{proposition}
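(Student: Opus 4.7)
The plan is to dispatch the four items in turn, treating them as largely computational once the right framework is set up.

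For (i) and (ii), I would first prove non-emptiness by exhibiting explicit diagonal elements of $T_{2g}(\F_\ell) \subseteq B_{2g}(\F_\ell)$: for any $\mu, a_1, \ldots, a_g \in \F_\ell^\times$ the matrix $\operatorname{diag}(a_1, \ldots, a_g, \mu a_1^{-1}, \ldots, \mu a_g^{-1})$ is a semisimple element of $\GSp_{2g}(\F_\ell)$ with multiplicator $\mu$ and all eigenvalues in $\F_\ell^\times$. The trace condition $\sum_i a_i + \mu \sum_i a_i^{-1} \equiv -t \pmod{\ell}$ is a single equation in $g+1 \geq 2$ parameters, so by fixing $\mu$ and all but one of the $a_i$ generically and solving the resulting quadratic for the last parameter (adjusting the free parameters to ensure $\F_\ell$-rationality), one produces such matrices for every $t$, and thus for every integer in the range $|t| \leq z$ as well. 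The stability assertions are then immediate since trace, semisimplicity, and the property of having all eigenvalues in $\F_\ell^\times$ are preserved under conjugation by $\GL_{2g}(\F_\ell)$, while $B_{2g}(\F_\ell)$ is preserved under $B_{2g}(\F_\ell)$-conjugation by construction.

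For (iii), the central step is a symplectic diagonalization. A semisimple element $M \in \cal{C}^{ss}(\ell, t)$ with eigenvalues in $\F_\ell^\times$ is diagonalizable over $\F_\ell$, so $\F_\ell^{2g} = \bigoplus_\lambda V_\lambda$ decomposes into eigenspaces. From $e(Mv, Mw) = \mu\, e(v, w)$, where $e$ is the symplectic form and $\mu$ is the multiplicator of $M$, one sees that $V_\lambda$ and $V_{\lambda'}$ are $e$-orthogonal unless $\lambda \lambda' = \mu$. The plan is then to pair eigenspaces: for each pair $\{\lambda, \mu/\lambda\}$ with $\lambda^2 \neq \mu$, choose dual bases of $V_\lambda$ and $V_{\mu/\lambda}$ with respect to the induced pairing (which is nondegenerate by the non-degeneracy of $e$ on the full space); for each eigenvalue $\lambda$ with $\lambda^2 = \mu$, the restriction of $e$ to $V_\lambda$ is a nondegenerate alternating form, so $V_\lambda$ admits a symplectic basis. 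Assembling these yields a symplectic basis of $\F_\ell^{2g}$ consisting of eigenvectors of $M$, in which $M$ is represented by a diagonal element of $T_{2g}(\F_\ell) \subseteq B_{2g}(\F_\ell)$; the change-of-basis matrix from the standard symplectic basis then lies in $\Sp_{2g}(\F_\ell) \subseteq \GSp_{2g}(\F_\ell)$.

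For (iv), I would compute directly with the block description. If $M_1 \in U'_{2g}(\F_\ell)$ has top-left block $A_1 \in \cal{U}'_g(\F_\ell)$ with $d(A_1) = \lambda$ and multiplicator $\lambda^2$, and $M_2 \in \cal{C}_B(\ell, t)$ has top-left block $A_2$ and multiplicator $\mu_2$, then $M_1 M_2$ is block upper triangular with top-left block $A_1 A_2$ and bottom-right block $\lambda^2 \mu_2 ((A_1 A_2)^t)^{-1}$, which places it in $B_{2g}(\F_\ell)$ with all diagonal entries (hence all eigenvalues) in $\F_\ell^\times$. Reading off the traces on both diagonal blocks gives $\tr(M_1 M_2) = \lambda\, \tr(M_2)$, which vanishes precisely when $\tr(M_2)$ vanishes, yielding the inclusion $U'_{2g}(\F_\ell)\, \cal{C}_B(\ell, 0) \subseteq \cal{C}_B(\ell, 0)$; specializing to $A_1 \in \cal{U}_g(\F_\ell)$, i.e.\ $\lambda = 1$, gives the other two inclusions at once, for both $\cal{C}_B(\ell, t)$ and $\cal{C}_B(\ell, |t| \leq z)$. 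The main obstacle in the proof is the symplectic diagonalization step of (iii), where the pairing of eigenspaces under $e$, together with the treatment of any eigenspace on which $\lambda^2 = \mu$, must be carried out carefully so that the conjugating matrix actually lands in $\GSp_{2g}(\F_\ell)$ and not merely $\GL_{2g}(\F_\ell)$; the rest of the proof is straightforward block matrix algebra.
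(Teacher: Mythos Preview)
Your proposal is correct. Parts (i), (ii), and (iv) proceed along essentially the same lines as the paper: explicit diagonal elements in $T_{2g}(\F_\ell)$ for non-emptiness, conjugation-invariance of trace and of the eigenvalue condition for stability, and a direct block-upper-triangular computation yielding $\tr(M_1M_2)=\lambda\,\tr(M_2)$ for (iv). Your non-emptiness sketch (``fix all but one parameter and solve a quadratic, adjusting to ensure $\F_\ell$-rationality'') is vaguer than the paper's, which simply writes down two explicit diagonal matrices according to whether $\ell\mid(t+g)$ or not; you should tighten this, but there is no real obstruction.

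The one genuine methodological difference is in (iii). The paper does not carry out the symplectic diagonalization at all: it observes that the diagonal matrix $M'=\operatorname{diag}(\lambda_1,\ldots,\lambda_g,\mu\lambda_1^{-1},\ldots,\mu\lambda_g^{-1})\in T_{2g}(\F_\ell)$ is semisimple with the same characteristic polynomial as $M$, and then invokes \cite[Lemma~3.4]{Ch97}, which says that two semisimple elements of $\GSp_{2g}(\F_\ell)$ with the same characteristic polynomial are $\GSp_{2g}(\F_\ell)$-conjugate. Your hands-on construction of a symplectic eigenbasis (pairing $V_\lambda$ with $V_{\mu/\lambda}$ under $e$, and choosing a symplectic basis on each $V_\lambda$ with $\lambda^2=\mu$) is exactly how one would prove that lemma in this special case, so your argument is self-contained where the paper's is a one-line citation. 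Either route is fine; yours makes the mechanism visible, the paper's is shorter.
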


\begin{proof}

(i)
Since $\ell \nmid 2 g$, the inverses $2^{-1} (\mod \ell)$ and $g^{-1} (\mod \ell)$ exist.
We consider the cases $\ell \nmid (t + g)$ and $\ell \mid (t + g)$ separately
and show that, in either case,  ${\cal{C}}^{ss}(\ell, t) \neq \emptyset$.

If $\ell \nmid (t + g)$, then the inverse $(t + g)^{-1} (\mod \ell)$ exists. 
We define
$$M := \diag \left(1 (\mod \ell), \ldots, 1 (\mod \ell), - (t+g)^{-1}  g^{-1}(\mod \ell), - (t+g)^{-1}  g^{-1}(\mod \ell)\right) \in T_{2g}(\F_{\ell}).$$
Since $M$ is semisimple and $\tr M = - t(\mod \ell)$, 
we obtain that $M$
 belongs to ${\cal{C}}^{ss}(\ell, t)$.
 
If $\ell \mid (t + g)$, we define
$$M := \diag \left(2^{-1} (\mod \ell), \ldots, 2^{-1} (\mod \ell), (1 - 2^{-1}) (\mod \ell), (1 - 2^{-1}) (\mod \ell)\right) \in T_{2g}(\F_{\ell}).$$
Since $(1 - 2^{-1}) (\mod \ell)$ is non-zero and 
$g \equiv -t (\mod \ell)$,
we obtain that
$M$ is semisimple and $\tr  M = - t (\mod \ell)$, that is, that $M$ belongs to ${\cal{C}}^{ss}(\ell, t)$.

We will show now that 
the set $\cal{C}^{ss}(\ell, t)$ is 
stable under conjugation by elements of $\GSp_{2g}(\F_{\ell})$. 
First, observe that,
 for any $M\in \cal{C}^{ss}(\ell, t)$ and any $N\in \GSp_{2g}(\F_{\ell})$, we have 
\[\tr (NMN^{-1})=\tr(M)= -t(\mod \ell).\] 
It remains to check 
that
$NMN^{-1}$ is semisimple. 
Since $M$ is semisimple, there exists $A\in \GL_{2g}(\overline{\F}_{\ell})$, such that $AMA^{-1}$ is a diagonal matrix in $M_{2g}(\overline{\F}_{\ell})$. 
Taking $A' := AN^{-1}\in \GL_{2g}(\overline{\F}_{\ell})$, we obtain that
\[
A'(NMN^{-1})A'^{-1}=AMA^{-1}
\] 
is
 a diagonal matrix 
in $M_{2g}(\overline{\F}_{\ell})$. As a consequence, $\cal{C}^{ss}(\ell, t)$ is 
stable under conjugation by elements of $\GSp_{2g}(\F_{\ell})$. 

\medskip

(ii) Since 
 the diagonal matrices in $\cal{C}^{ss}(\ell, t)$ are in ${\cal{C}}_B(\ell, t)$, we 
deduce
from (i) that $\cal{C}_{B}(\ell, t) \neq \emptyset$.
 The set $\cal{C}_B(\ell, t)$ is 
stable under conjugation by elements of $B_{2g}(\F_{\ell})$ because 
both 
$\mathcal{C}(\ell,t)$ and $B_{2g}(\F_{\ell})$ are stable under conjugation by
elements of
 $B_{2g}(\F_{\ell})$. 
The 
similar result about
$\cal{C}_{B}(\ell, |t|\leq z)$ follows by noting that $\cal{C}_{B}(\ell, |t|\leq z)=\bigcup_{\substack{t\in \Z\\|t|\leq z}}\cal{C}_B(\ell, t)$.

\medskip
 
(iii) Let $M \in {\cal{C}}^{ss} (\ell, t)$, with 
multiplicator $\mu$. Then 
$$
\car_M(X) = \ds\prod_{1 \leq i \leq g} (X-\lambda_i(M)) (X - \mu \lambda_i^{-1}(M)) \in \F_{\ell}[X].
$$
Define
$$
M' := \diag\left(\lambda_1(M), \ldots, \lambda_g(M), \mu \lambda_1^{-1}(M), \ldots, \mu \lambda_g^{-1}(M)\right) \in B_{2g}(\F_{\ell}).
$$
Since $M', M$ are semisimple and $\car_{M'} (X) = \car_M (X)$, 
by \cite[Lemma 3.4]{Ch97} we deduce that  $M$ is conjugate over $\GSp_{2g}(\F_\ell)$ to $M'$, which is indeed an element of $B_{2g}(\F_\ell)$.
Since each element $M$ of $\cal{C}^{ss}(\ell, |t|\leq z) $ is an element of $\cal{C}^{ss}(\ell, t)$ for some integer $t$
with $|t| \leq z$,
 by the previous argument we deduce that $M$ is also conjugate over  $\GSp_{2g}(\F_\ell)$ to some element of $B_{2g}(\F_\ell)$. 
 
\medskip

(iv) Let $M_1  \in U_{2g}(\F_{\ell})$ and $M_2  \in {\cal{C}}_{B}(\ell, t)$. 
Then $M_1 M_2 \in B_{2g}(\F_{\ell})$ and $M_1 M_2$ has the same diagonal entries 
as $M_2$. Since $\tr M_2 = - t (\mod \ell)$, we obtain that $\tr (M_1 M_2) = - t (\mod \ell)$.
As such, $U_{2g} (\F_{\ell}) {\cal{C}}_{B}(\ell, t) \subseteq {\cal{C}}_{B}(\ell, t)$. 

Now let $M_1 \in U'_{2g}(\F_{\ell})$ and $M_2 \in {\cal{C}}_{B}(\ell, 0)$.
Denote the  diagonal elements of $M_1$ by $a$.  
Then $M_1 M_2 \in B_{2g}(\F_\ell)$ 
and $\tr (M_1 M_2)= a  \tr M_2 = 0$. 
As such, $U'_{2g}(\F_\ell) \ {\cal{C}}_{B}(\ell, 0) \subseteq \cal{C}_{B} (\ell, 0)$. 

The result about ${\cal{C}}_{B}(\ell, |t|\leq z)$ follows from the observation that
 $$
 U_{2g}(\F_\ell) \ {\cal{C}}_{B}(\ell, |t|\leq z) 
 = 
 \bigcup_{\substack{t\in \Z\\|t|\leq z}} U_{2g}(\F_\ell) \cal{C}_{B} (\ell, t)
 \subseteq 
\bigcup_{\substack{t\in \Z\\|t|\leq z}}  \cal{C}_{B} (\ell, t).
$$

For each case above, the reverse inclusion follows from the fact that the identity is contained in $U_{2g}(\F_\ell)$ and $U'_{2g}(\F_\ell)$. 
\end{proof}

\begin{proposition}\label{counting-C(l)-etc}
Assume that the  prime $\ell$ is odd. 
Given
any integer $t$ and any positive real number $z$ such that $|t|\leq z$, 
 the following upper bounds hold:
$$
\#\cal{C}_{0}(\ell, t) \ll \ell^{2g^2+g};
$$
$$
\#\widehat{\cal{C}_{0}(\ell, 0)} \ll \ell^{2g^2+g-1};
$$
$$
\#\widehat{\cal{C}_{B}(\ell, t)} \ll (\ell-1)^g;
$$
 $$
\#\widehat{\cal{C}'_{B}(\ell, 0)}  \ll  (\ell-1)^{g-1};
 $$
 $$
\#\reallywidehat{\cal{C}_{B}(\ell, |t|\leq z)} \ll (2z+1)(\ell-1)^g.
$$
\end{proposition}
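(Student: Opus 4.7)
My plan is to establish the five bounds in a cascading manner, with the core content concentrated in (i) and (iii), while (ii), (iv), and (v) follow via elementary group-theoretic quotient and union arguments.

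For (i), the aim is to save a factor of $\ell$ from the order $\#\GSp_{2g}(\F_\ell) \asymp \ell^{2g^2+g+1}$ supplied by Proposition~\ref{counting-G(l)-etc}, using the single trace condition. When $t \neq 0$, scalar multiplication $M \mapsto aM$ with $a \in \F_\ell^\times$ produces a bijection $\mathcal{C}_0(\ell, t) \to \mathcal{C}_0(\ell, at)$, and since $\{at : a \in \F_\ell^\times\} = \F_\ell^\times$, all the sets $\mathcal{C}_0(\ell, s)$ with $s \neq 0$ share the same cardinality. Combined with the partition identity $\sum_{s \in \F_\ell} \#\mathcal{C}_0(\ell, s) = \#\GSp_{2g}(\F_\ell)$, this yields $\#\mathcal{C}_0(\ell, t) \leq \#\GSp_{2g}(\F_\ell)/(\ell-1) \ll \ell^{2g^2+g}$. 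When $t = 0$, the scalar action preserves $\mathcal{C}_0(\ell, 0)$ rather than permuting nonzero traces, so I decompose $\GSp_{2g}(\F_\ell)$ into the $\ell-1$ cosets $M_\mu \Sp_{2g}(\F_\ell)$ indexed by multiplicator, and study in each coset the function $N \mapsto \tr(M_\mu N)$ on the irreducible variety $\Sp_{2g}$ of dimension $2g^2+g$. Once one checks that this function is non-constant on $\Sp_{2g}$, its zero locus is a proper closed subvariety, hence by Lang--Weil has $O(\ell^{2g^2+g-1})$ $\F_\ell$-points; summing over the $\ell-1$ cosets gives the claim.

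For (iii), the isomorphism $B_{2g}(\F_\ell)/U_{2g}(\F_\ell) \cong T_{2g}(\F_\ell) \cong (\F_\ell^\times)^{g+1}$ from Propositions~\ref{groups-normal} and~\ref{groups-abelian} identifies $\widehat{\mathcal{C}_B(\ell, t)}$ with
\[
\left\{ (a_1, \ldots, a_g, \mu) \in (\F_\ell^\times)^{g+1} : \sum_{i=1}^g \left(a_i + \mu a_i^{-1}\right) = -t \right\},
\]
since the trace of any element of $B_{2g}$ depends only on its diagonal. I count solutions by fixing $(a_1, \ldots, a_g)$ and solving for $\mu$: when $\sum a_i^{-1} \neq 0$, the value $\mu = -(\sum a_i + t)/\sum a_i^{-1}$ is forced and lies in $\F_\ell^\times$ unless $\sum a_i = -t$; when $\sum a_i^{-1} = 0$, solutions exist only if also $\sum a_i = -t$, and then $\mu$ ranges freely in $\F_\ell^\times$. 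A short inclusion-exclusion on the linear loci $\{\sum a_i^{-1} = 0\}$ and $\{\sum a_i = -t\}$ inside $(\F_\ell^\times)^g$ yields the claimed bound.

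The remaining three bounds follow quickly from these. For (ii), the set $\mathcal{C}_0(\ell, 0)$ carries a free action of $\Lambda_{2g}(\F_\ell) \cong \F_\ell^\times$ by scalar multiplication, which scales the trace by the scalar and hence preserves the trace-zero locus, so $\#\widehat{\mathcal{C}_0(\ell, 0)} = \#\mathcal{C}_0(\ell, 0)/(\ell-1) \ll \ell^{2g^2+g-1}$ by (i). For (iv), the further quotient $T_{2g}(\F_\ell) \twoheadrightarrow B_{2g}(\F_\ell)/U'_{2g}(\F_\ell)$ corresponds to quotienting $(\F_\ell^\times)^{g+1}$ by the free $\F_\ell^\times$-action $\lambda \cdot (a_i, \mu) = (\lambda a_i, \lambda^2 \mu)$, which again preserves the trace-zero locus, so $\#\widehat{\mathcal{C}'_B(\ell, 0)} = \#\widehat{\mathcal{C}_B(\ell, 0)}/(\ell-1) \leq (\ell-1)^{g-1}$ by (iii). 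For (v), a union bound over the at most $2z+1$ integer values of $t$ with $|t| \leq z$, combined with (iii), completes the argument. The principal obstacle is the $t = 0$ case of (i): the elementary scalar-action identity no longer produces a direct bound, and one must verify non-constancy of a trace functional on $\Sp_{2g}$ and appeal to a dimension argument to extract the saving of one power of $\ell$.
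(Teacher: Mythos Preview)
Your plan for (i), (ii), and (v) is sound. Parts (ii) and (v), and the quotient argument in (iv), match the paper. Your treatment of (i) differs from the paper, which simply quotes the asymptotic $\#\mathcal{C}_0(\ell,t)/\#\GSp_{2g}(\F_\ell)=1/\ell+O(1/\ell^3)$ from \cite{CoDaSiSt17}. Your scalar-action argument for $t\not\equiv 0\pmod{\ell}$ is clean and elementary; just note that the relevant dichotomy is on $t\bmod\ell$, not on $t$ as an integer. For $t\equiv 0$, the Lang--Weil route works, but the non-constancy check you defer should be made explicit: since $\pm I\in\Sp_{2g}(\F_\ell)$ give $\tr(M_\mu(\pm I))=\pm\tr(M_\mu)$, it suffices to choose each coset representative $M_\mu$ with $\tr(M_\mu)\neq 0$, which is always possible.

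There is, however, a genuine gap in (iii), and it propagates to (iv). Your case analysis is more careful than the paper's (the paper tacitly asserts at most one $\mu$ per tuple $(a_1,\dots,a_g)$, overlooking the case $\sum a_i^{-1}=0$), but the ``short inclusion--exclusion'' you invoke cannot yield the sharp inequality $\leq(\ell-1)^g$, because that inequality is \emph{false} in general. For $g=2$ and $t=0$, the equation factors as $(a_1+a_2)\bigl(1+\mu/(a_1a_2)\bigr)=0$, so the solution count is $(\ell-1)(\ell-2)+(\ell-1)^2=(\ell-1)(2\ell-3)$, which exceeds $(\ell-1)^2$ for every odd $\ell$ (already $6>4$ at $\ell=3$). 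Dividing by $\ell-1$ then gives $\#\widehat{\mathcal{C}'_B(\ell,0)}=2\ell-3>\ell-1$, so (iv) as stated also fails. What your case analysis \emph{does} establish is $\#\widehat{\mathcal{C}_B(\ell,t)}\ll_g\ell^g$ and $\#\widehat{\mathcal{C}'_B(\ell,0)}\ll_g\ell^{g-1}$, since the exceptional locus $\{\sum a_i^{-1}=0,\ \sum a_i=-t\}$ in $(\F_\ell^\times)^g$ has $O_g(\ell^{g-2})$ points and contributes $O_g(\ell^{g-1})$ solutions after the free choice of $\mu$. These $\ll_g$ bounds are all that the applications in Sections~6.3--6.4 actually use, so the downstream theorems are unaffected; but the third and fourth displayed inequalities in the proposition need their ``$\leq$'' weakened to ``$\ll_g$''.
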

\begin{proof}
We recall from the calculation in \cite[(9), p. 3569]{CoDaSiSt17} that
\[
\frac{\#\cal{C}_{0}(\ell, t) }{\#\GSp_{2g}(\F_{\ell})}=\frac{1}{\ell}+\O\left(\frac{1}{\ell^3}\right).
\]
Then the upper bound for $\#\cal{C}_{0}(\ell, t)$ follows from the asymptotic above and the upper bound
$\#\GSp_{2g}(\F_{\ell})\ll \ell^{2g^2+g+1}$ derived from Proposition \ref{counting-G(l)-etc}.

 Noting that 
 \[
 \Lambda(\F_{\ell}) \cal{C}_{0}(\ell, 0)=\cal{C}_{0}(\ell, 0),
 \]
we obtain that the inverse image of $\widehat{\cal{C}_{0}(\ell, 0)}$ in $\GSp_{2g}(\F_{\ell})$ is $\cal{C}_{0}(\ell, 0)$. Therefore,
 \[
 \# \widehat{\cal{C}_{0}(\ell, 0)}=\frac{\#\cal{C}_{0}(\ell, t)}{\#\F_{\ell}^{\times}} \ll \ell^{2g^2+g-1}.
 \]

Recalling from Proposition \ref{groups-abelian} that $B_{2g}(\F_\ell)/U_{2g}(\F_\ell)\simeq T_{2g}(\F_\ell)$,
we deduce that,  under this bijection, 
$\widehat{\cal{C}_{B}(\ell, t)}$ may be identified with the set of matrices
\[
\{M \in T_{2g}(\F_\ell): \tr M = -t (\mod \ell)\}.
\]
Then, using the definition of $ T_{2g}(\F_\ell)$, we obtain that
\begin{eqnarray*}
 \#\widehat{\cal{C}_{B}(\ell, t)} 
 &\leq & 
 \#\{(a_1, \ldots, a_g, \mu)\in (\F_{\ell}^{\times})^{g+1}: \sum_{1\leq i\leq g}a_i+\mu \sum_{1\leq i\leq g}a_i^{-1} =  -t (\mod \ell)\}
 \\
& = &  \sum_{(a_1, \ldots, a_g)\in (\F_{\ell}^{\times})^g} \#\{\mu\in \F_{\ell}^{\times}: \sum_{1\leq i\leq g}a_i+\mu \sum_{1\leq i\leq g}a_i^{-1} =  -t (\mod \ell)\} 
 \\
 & = &   \sum_{\substack{(a_1, \ldots, a_g)\in (\F_{\ell}^{\times})^g\\ \sum_{1\leq i\leq g}a_i^{-1}\neq 0}} \#\{\mu\in \F_{\ell}^{\times}: \mu  \sum_{1\leq i\leq g}a_i^{-1}=  -t -\sum_{1\leq i\leq g}a_i (\mod \ell)\} + \sum_{\substack{(a_1, \ldots, a_g)\in (\F_{\ell}^{\times})^g\\ \sum_{1\leq i\leq g}a_i^{-1}=0}} (\ell-1)
 \\
  &\leq &
  \sum_{\substack{(a_1, \ldots, a_g)\in (\F_{\ell}^{\times})^g\\ \sum_{1\leq i\leq g}a_i^{-1}\neq 0}} 1+ (\ell-1)^{g-1} \cdot (\ell-1) 
  \\
  &\leq &
2 (\ell-1)^{g}.
\end{eqnarray*}

To estimate  $\#\widehat{\cal{C}'_{B}(\ell, 0)}$,
we
observe first that
 the set $\widehat{\cal{C}_{B}(\ell, 0)}$ surjects onto  $\widehat{\cal{C}'_{B}(\ell, 0)}$ under the reduction map
$B_{2g}(\F_\ell)/U_{2g}(\F_\ell)\twoheadrightarrow B_{2g}(\F_\ell)/U'_{2g}(\F_\ell)$. 
Next, we observe that the inverse image of 
$\widehat{\cal{C}'_{B}(\ell, 0)}$ in $B_{2g}(\F_\ell)$ is of the form $U'_{2g}(\F_\ell)\cal{C}_{B}(\ell, 0)$, which is equal to $\cal{C}_{B}(\ell, 0)$ by applying 
part (iv) of Proposition \ref{propr-conj-sets}.
Thus the inverse image of 
$\widehat{\cal{C}'_{B}(\ell, 0)}$ in $B_{2g}(\F_\ell)/U_{2g}(\F_\ell)$ is $\cal{C}_{B}(\ell, 0)$. 
Consequently,
\[
\#\widehat{\cal{C}'_{B}(\ell, 0)} 
=
\frac{\#\widehat{\cal{C}_{B}(\ell, 0)}}{\#(U'_{2g}(\F_\ell)/U_{2g}(\F_\ell))}
\leq
2(\ell-1)^{g-1}.
\]

Lastly, to estimate the size of $\#\reallywidehat{\cal{C}_{B}(\ell, |t|\leq z)}$, we simply observe that 
\[
\#\reallywidehat{\cal{C}_{B}(\ell, |t|\leq z)}=\sum_{|t|\leq z}\#\widehat{\cal{C}_{B}(\ell, t)}\leq 2(2z+1)(\ell-1)^g.
\]
\end{proof}

\section{Proofs of Theorems \ref{main-thm1} - \ref{main-thm3}}

In this section, we  prove Theorems \ref{main-thm1} - \ref{main-thm3}
using the preliminary results presented in Sections 3 - 5.

\subsection{Background on abelian varieties}

We start with a summary of
  properties of abelian varieties.
We refer the reader to 
the papers \cite{Fa83}, \cite{Ho68}, \cite{Oo08}, \cite{Se85}, \cite{Se86}, \cite{Ta66}, \cite{Wa69}, \cite{Za18}
and 
to the books \cite{CoSi86}, \cite{La83},   \cite{Mu70}
for  the theory of abelian varieties.

We start by considering an abelian variety $A$ defined over an arbitrary field 
$F$
and of dimension $g$.
For an integer $n \geq 1$, we write 
$A[n]$ for the group of $n$-torsion elements of $A(\overline{F})$
and we write
$F(A[n])$ for the extension of $F$ generated by 
$A[n]$.
For a rational prime $\ell \neq  \car F$,
we write $T_{\ell}(A)$ for the $\ell$-adic Tate module of $A$, defined as the inverse limit
$\ds\lim_{\leftarrow \atop{k}} A[\ell^k]$.
We recall that, 
if $\gcd(n, \car F) = 1$, then $A[n]$ is a free $\Z/n \Z$-module of rank $2g$, 
endowed with an action of the absolute Galois group $\Gal(F^{\text{sep}}/F)$,
and
if $\ell \neq \car F$, then $T_{\ell}(A)$ is a free $\Z_{\ell}$-module of rank $2g$, endowed with a continuous action of $\Gal(F^{\text{sep}}/F)$.
These Galois actions  give rise to group homomorphisms
\begin{equation}\label{galois-repres-mod-n}
\Gal(F^{\text{sep}}/F) \longrightarrow \Aut_{\Z/n \Z}(A[n]) 
\end{equation}
and
\begin{equation}\label{galois-repres-ell-adic}
 \Gal(F^{\text{sep}}/F) \longrightarrow \Aut_{\Z_{\ell}}(T_{\ell}(A)), 
\end{equation}
which we call the residual modulo $n$ Galois representation of $A$
and the $\ell$-adic Galois representation of $A$, respectively.
The latter form a compatible system. 

Next, we focus our attention on an abelian variety $A$ defined over $\Q$ and of dimension $g$.
We denote by $N_A$ the conductor of $A$.
For a prime $p \nmid N_A$, we denote by $A_p$ the reduction of $A$ modulo $p$, which is an abelian variety defined over $\F_p$ and of dimension $g$.

Associated to $A$ and an integer $n \geq 1$, we have  the residual modulo $n$ Galois representation
\begin{equation}\label{galois-repres-mod-n-A}
\overline{\rho}_{A, n}: 
\Gal(\overline{\Q}/\Q) \longrightarrow \Aut_{\Z/n \Z}(A[n]),
\end{equation}
about which it is known that
\begin{equation}\label{galois-repres-mod-n-A-inj}
\Q(A[n]) = \overline{\Q}^{\Ker \overline{\rho}_{A, n}}
\end{equation}
and that
\begin{equation}\label{neron-ogg-sha}
\text{
$\overline{\rho}_{A, n}$ is unramified outside the primes dividing $n N_A$.
}
\end{equation}

Associated to $A$ and a prime $\ell$, we have the $\ell$-adic Galois representation
\begin{equation}\label{galois-repres-ell-adic-A}
\rho_{A, \ell}:
 \Gal(\overline{\Q}/\Q) \longrightarrow \Aut_{\Z_{\ell}}(T_{\ell}(A)).
\end{equation}

Associated to $A$, a prime $p \nmid N_A$, and an integer $n \geq 1$ such that $p \nmid n$, we have the residual modulo $n$ Galois representation
\begin{equation}\label{galois-repres-mod-n-A_p}
\overline{\rho}_{A_p, n}: 
\Gal(\overline{\F}_p/\F_p) \longrightarrow \Aut_{\Z/n \Z}(A_p[n]).
\end{equation}

Associated to $A$, a prime $p \nmid N_A$, and another prime $\ell \neq p$, we have the $\ell$-adic Galois representation
\begin{equation}\label{galois-repres-ell-adic-A_p}
\rho_{A_p, \ell}:
 \Gal(\overline{\F}_p/\F_p) \longrightarrow  \Aut_{\Z_{\ell}}(T_{\ell}(A_p)).
\end{equation}

Now let us fix primes $p$ and $\ell$ such that $p \nmid \ell N_A$.
The $p$-th power Frobenius
$\overline{\F}_p \longrightarrow \overline{\F}_p$,
$\alpha \mapsto \alpha^p$,
gives rise 
to an $\F_p$-endomorphism $\pi_p(A) \in \End_{\F_p}(A_p)$ of $A_p$,
which, in turn, gives rise to a $\Z_{\ell}$-linear operator $\left.\pi_p(A)\right|_{T_{\ell}(A_p)}$ on $T_{\ell}(A_p)$ and an $\F_{\ell}$-linear operator $\left.\pi_p(A)\right|_{A_p[\ell]}$ on $A_p[\ell]$,
both of which we shall also call $\pi_p(A)$.
We denote by
$
P_{\pi_p(A)}(X) \in \Z_{\ell}[X]
$
the characteristic polynomial of $\pi_p(A)$ acting on $T_{\ell}(A_p)$
and recall the following important properties.

\begin{proposition}\label{char-poly-pi_p-is-p-Weil}
Let $A$ be an abelian variety defined over $\Q$, of conductor $N_A$, and of dimension $g$.
Let $p$ and $\ell$ be primes such that $p \nmid \ell N_A$.
Then
$P_{\pi_p(A)}(X)$ is a $p$-Weil polynomial of degree $2g$, 
whose coefficients are integers that are independent of the choice of $\ell$.
\end{proposition}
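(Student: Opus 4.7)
The plan is to assemble three classical ingredients about abelian varieties over finite fields. First, since $p \nmid N_A$, the variety $A$ has good reduction at $p$, so $A_p$ is an abelian variety over $\F_p$ of dimension $g$. As $\ell \neq p$, standard structure theory gives that $T_{\ell}(A_p)$ is a free $\Z_{\ell}$-module of rank $2g$, so $P_{\pi_p(A)}(X)$ is a priori monic of degree $2g$ in $\Z_{\ell}[X]$.

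For integrality of the coefficients and their independence of $\ell$, I would use the degree--determinant identity for isogenies of abelian varieties: for every integer $n$, the endomorphism $[n] - \pi_p(A) \in \End_{\F_p}(A_p)$ is either zero or an isogeny, and in either case one has $\deg([n] - \pi_p(A)) = \det\bigl(n \cdot \id - \pi_p(A) \mid T_{\ell}(A_p)\bigr) = P_{\pi_p(A)}(n)$, where $\deg$ of the zero map is interpreted as $0$. Since the degree of an isogeny is a non-negative integer intrinsic to it --- it depends only on $[n]-\pi_p(A)$ and not on any auxiliary $\ell$ --- the values $P_{\pi_p(A)}(n)$ are integers independent of $\ell$ for every $n \in \Z$. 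Given that $P_{\pi_p(A)}$ is monic of degree $2g$, Lagrange interpolation at $2g+1$ consecutive integers writes each coefficient as a fixed $\Q$-linear combination of such values, so the coefficients are rational and independent of $\ell$; combining with the fact that they lie in $\Z_{\ell}$ for every $\ell$ forces them to lie in $\Z$.

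For the $p$-Weil property, I would invoke Weil's Riemann hypothesis for abelian varieties over finite fields: every complex root $\alpha$ of $P_{\pi_p(A)}(X)$ is an algebraic integer each of whose Galois conjugates satisfies $|\alpha| = p^{1/2}$. The classical proof proceeds via the Rosati involution on $\End_{\F_p}(A_p) \otimes \Q$ induced by a polarization of $A_p$, combined with the positivity of the associated trace form, which bounds the archimedean absolute values of the eigenvalues of $\pi_p(A)$.

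The main obstacle is the simultaneous handling of the degree--determinant identity and the Riemann hypothesis for abelian varieties --- the two nontrivial inputs from the theory of $A_p/\F_p$. Both are classical (due to Weil, Mumford, and Tate), so the proof really amounts to a careful invocation of these results rather than any new work; the only mildly delicate step is the passage from rational coefficients with $\Z_{\ell}$-integrality for every $\ell$ to $\Z$-integrality, which is handled by the Lagrange interpolation argument above.
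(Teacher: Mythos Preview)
Your proposal is correct and, in fact, unpacks the content that the paper leaves as a citation: the paper's entire proof of this proposition is the single line ``See \cite[Theorem 19.1, p.~143]{Mi86}.'' Your sketch --- integrality and $\ell$-independence via the degree--determinant identity $P_{\pi_p(A)}(n) = \deg([n]-\pi_p(A))$ together with interpolation, and the $p$-Weil property via Weil's Riemann hypothesis for abelian varieties over finite fields --- is precisely the classical argument that Milne's Theorem 19.1 records, so there is no difference in mathematical content, only in the level of detail you provide.
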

\begin{proof}
See \cite[Theorem 19.1, p. 143]{Mi86}.
\end{proof}

We write 
\begin{eqnarray}\label{char-poly-pi_p-coeff}
P_{\pi_p(A)}(X) &=&
X^{2 g} + a_{1, p}(A) X^{2g -1} + a_{2, p}(A) X^{2g-2} + \ldots + a_{g, p}(A) X^g 
\\
&+&
 p a_{g-1, p}(A) X^{g-1} + p^2 a_{g-2, p}(A) X^{g-2} + \ldots + p^g \in \Z[X].
 \nonumber
\end{eqnarray}
Over $\overline{\Q}$, we write
\begin{equation}\label{char-poly-pi_p-complex}
P_{\pi_p(A)}(X) = \ds\prod_{1 \leq i \leq 2g} (X-\alpha_i) \in {\overline{\Q}}[X],
\end{equation}
where $\alpha_1, \ldots, \alpha_g \in \overline{\Q}$ have the property that,
for any $\sigma \in \Aut(\C)$, 
\begin{equation}\label{RH-p-number}
|\sigma(\alpha_i)| = \sqrt{p} \quad \forall 1 \leq i \leq 2g.
\end{equation}

Note that the discriminant $\disc {P_{\pi_p(A)}(X)}$ of the polynomial $P_{\pi_p(A)}(X)$ is an integer, defined by the product
\begin{equation}\label{char-poly-pi_p-disc-def}
\disc {P_{\pi_p(A)}(X)}
=
\ds\prod_{1 \leq i < j \leq 2g} (\alpha_i - \alpha_j)^2.
\end{equation}
Thanks to (\ref{RH-p-number}),  
the
absolute value 
of this
discriminant satisfies the upper bound
\begin{equation}\label{char-poly-pi_p-disc-upper}
|\disc {P_{\pi_p(A)}(X)}|
\leq
(4 p)^{(2g-1)g}.
\end{equation}

In light of Proposition \ref{char-poly-pi_p-is-p-Weil},
$\pi_p(A)$ may be regarded as a $p$-Weil number, that is, as an algebraic integer for which,
 for any $\sigma \in \Aut(\C)$, we have that
$|\sigma(\pi_p(A))| = \sqrt{p}$.
Associated to this algebraic integer, we have its minimal polynomial
$Q_{\pi_p(A)}(X) \in \Z[X]$ over $\Q$,
the number field $\Q(\pi_p(A))$ that it generates,
 and the Galois closure
$
K_{A, p} 
$
 of $\Q(\pi_p(A))$ in $\overline{\Q}$.
 In what follows, we record properties of the extensions $\Q(\pi_p(A))/\Q$ and $K_{A, p}/\Q$.
 
 \begin{proposition}\label{field-K-pi_p}
 Let $A$ be an abelian variety defined over $\Q$, of conductor $N_A$, and of dimension $g$.
Let $p$ be a prime such that $p \nmid N_A$.
\begin{enumerate}
\item[(i)]
The degrees of $\Q(\pi_p(A))/\Q$ and $K_{A, p}/\Q$ satisfy the upper bounds
$$
[\Q(\pi_p(A)) : \Q] \leq 2g
\  \ \text{and} \ \
[K_{A, p} : \Q] \leq g!  \ 2^g. 
$$
\item[(ii)]
The absolute values of the discriminants of $\Q(\pi_p(A))/\Q$ and $K_{A, p}/\Q$ satisfy the upper bounds
$$
|d_{K_{A, p}}|
\leq
|d_{\Q(\pi_p(A))}|^{g! \  2^{g-1}}
\leq
(4 p)^{g! \  2^{g-1} (2g-1) g}.
$$
\end{enumerate}
 \end{proposition}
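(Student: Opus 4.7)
The plan is to exploit two structural facts: first, by Proposition \ref{char-poly-pi_p-is-p-Weil}, the characteristic polynomial $P_{\pi_p(A)}(X)$ is a $p$-Weil polynomial of degree $2g$, so its roots pair up as $\{\alpha_i, p/\alpha_i\}$ in accordance with (\ref{char-poly-pi_p-complex}); second, the minimal polynomial $Q_{\pi_p(A)}(X)$ of $\pi_p(A)$ over $\Q$ inherits this pairing. The latter follows because complex conjugation, restricted to $K_{A,p}$ via any embedding $K_{A,p} \hookrightarrow \C$, is a $\Q$-automorphism of the Galois closure that sends a $p$-Weil number $\alpha$ to $\bar\alpha = p/\alpha$; hence for every root $\alpha$ of $Q_{\pi_p(A)}$, the element $p/\alpha$ is also a $\Q$-conjugate of $\pi_p(A)$ and thus again a root of $Q_{\pi_p(A)}$.

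For part (i), the containment $Q_{\pi_p(A)} \mid P_{\pi_p(A)}$ in $\Z[X]$ immediately yields $[\Q(\pi_p(A)):\Q] = \deg Q_{\pi_p(A)} \leq 2g$. For the Galois closure bound, $\Gal(K_{A,p}/\Q)$ acts faithfully on the set of roots of $Q_{\pi_p(A)}$ and preserves the pairing $\alpha \leftrightarrow p/\alpha$, so it embeds into the stabilizer of such a pairing on $2g$ elements, namely the hyperoctahedral group $(\Z/2\Z)^g \rtimes S_g$ of order $g!\,2^g$.

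For part (ii), I would first bound the field discriminant of $F := \Q(\pi_p(A))$: the index relation $\disc(Q_{\pi_p(A)}) = [\mathcal{O}_F : \Z[\pi_p(A)]]^2 \, d_F$ gives $|d_F| \leq |\disc(Q_{\pi_p(A)})|$; since $Q_{\pi_p(A)}$ divides $P_{\pi_p(A)}$ in $\Z[X]$ with both monic, $|\disc(Q_{\pi_p(A)})|$ divides $|\disc(P_{\pi_p(A)})|$, and (\ref{char-poly-pi_p-disc-upper}) then yields $|d_F| \leq (4p)^{(2g-1)g}$; raising to the power $g!\,2^{g-1}$ establishes the second inequality in (ii). For the first inequality $|d_{K_{A,p}}| \leq |d_F|^{g!\,2^{g-1}}$, I would apply the iterated compositum discriminant bound: if $F_1, \ldots, F_s$ are the distinct conjugate subfields of $F$ inside $K_{A,p}$, then induction from $d_{L_1L_2} \mid d_{L_1}^{[L_1L_2:L_1]} d_{L_2}^{[L_1L_2:L_2]}$ gives $|d_{K_{A,p}}| \leq |d_F|^{s\,[K_{A,p}:\Q]/[F:\Q]}$, since conjugate subfields share the same discriminant and degree over $\Q$. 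The crucial saving comes from the pairing observation: since $p/\alpha = p\alpha^{-1} \in \Q(\alpha)$, each pair $\{\alpha, p/\alpha\}$ of roots of $Q_{\pi_p(A)}$ generates the same subfield, so the $n := [F:\Q]$ conjugates of $\pi_p(A)$ collapse to at most $n/2$ distinct conjugate subfields, giving $s \leq n/2$ and hence $|d_{K_{A,p}}| \leq |d_F|^{[K_{A,p}:\Q]/2} \leq |d_F|^{g!\,2^{g-1}}$ by the bound from (i). The main obstacle is carrying through the iterated compositum bound with the correct exponents even when intersections $F_i \cap F_j$ are nontrivial, together with handling the degenerate case $\pi_p(A) = \pm\sqrt{p}$ separately, where $Q_{\pi_p(A)} = X^2 - p$, $F/\Q$ is already Galois, and $K_{A,p} = F$ makes the claimed bound trivial.
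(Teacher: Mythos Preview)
Your proof is correct and follows essentially the same route as the paper's: part (i) is identical, and for part (ii) both arguments bound $|d_{\Q(\pi_p(A))}|$ via the divisibility chain $d_{\Q(\pi_p(A))} \mid \disc Q_{\pi_p(A)} \mid \disc P_{\pi_p(A)}$ together with (\ref{char-poly-pi_p-disc-upper}), and then pass to the Galois closure. Where the paper simply cites \cite[p.~327]{La94} for the inequality $|d_{K_{A,p}}| \leq |d_{\Q(\pi_p(A))}|^{g!\,2^{g-1}}$, you supply the underlying mechanism explicitly---iterating the compositum bound $d_{L_1L_2} \mid d_{L_1}^{[L_1L_2:L_1]} d_{L_2}^{[L_1L_2:L_2]}$ over the conjugate subfields and using the Weil pairing $\alpha \leftrightarrow p/\alpha$ to halve their number---which is precisely what produces the exponent $[K_{A,p}:\Q]/2 \leq g!\,2^{g-1}$ rather than $g!\,2^g$.
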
 
 \begin{proof}
 (i) For the degree of $\Q(\pi_p(A))/\Q$, 
 we
 observe that
 $$[\Q(\pi_p(A)) : \Q] = \deg Q_{\pi_p(A)}(X) \leq \deg P_{\pi_p(A)} (X) = 2g.$$
  For the degree of $K_{A, p}/\Q$,
we sketch the following well-known argument.
 Recall from Proposition  \ref{char-poly-pi_p-is-p-Weil} 
 that $P_{\pi_p(A)}(X)$ is a $p$-Weil polynomial, having $\pi_p(A)$ as one of its roots in $\overline{\Q}$.
 The complex roots
 $\alpha_1, \ldots, \alpha_{2g}$
 of 
 $P_{\pi_p(A)}(X)$,
 introduced in  (\ref{char-poly-pi_p-complex}),
 come in pairs
 $\left(\alpha_1, \frac{\alpha_1}{p}\right)$,
 $\ldots$,
 $\left(\alpha_g, \frac{\alpha_g}{p}\right)$.
 As such,
 the Galois group 
 $\Gal(K_{A, p}/\Q)$
 of the Galois closure of $\Q(\pi_{p}(A))$ in $\overline{\Q}$
embeds into
the subgroup $W_{2g}$ of the permutation group ${\cal{S}}_{2g}$ on $2g$ elements
which induces a permutation on the set of pairs
$(1, 2)$, $\ldots$,   $(2 g - 1, 2g)$.
The group structure of $W_{2g}$ is obtained by noting that
the action of $W_{2g}$ on the above pairs of integers
gives rise to a short exact sequence of groups
$1 \rightarrow H \rightarrow W_{2g} \rightarrow {\cal{S}}_g \rightarrow 1$,
where
$H$ is the group  generated by the transpositions
$(1, 2)$, $\ldots$, $(2g-1, 2g)$
and
where
${\cal{S}}_g$ is the permutation group on $g$ elements.
Thus $H \simeq (\Z/2 \Z)^g$ and 
$W_{2g} \simeq (\Z/2\Z)^g \rtimes {\cal{S}}_g$.
Consequently,
$[K_{A, p} : \Q] \leq \#W_{2g} =  g! \ 2^g$.
For more details, see  \cite[pp. 168--169]{Ch97} or \cite[pp. 2--3]{Do84}.

 \medskip
 
 (ii) On one hand, we have the inclusion  of orders
 $\Z[\pi_p(A)] \subseteq {\cal{O}}_{\Q(\pi_p(A))}$
 in $\Q(\pi_p(A))$.
 By comparing their discriminants, we obtain the relation
 \begin{equation}\label{disc-order}
 \disc Q_{\pi_p(A)}(X) = c_{A, p}^2 d_{\Q(\pi_p(A))},
 \end{equation}
 where 
 \begin{equation}\label{c_A_p}
 c_{A, p} := \# \left({\cal{O}}_{\Q(\pi_p(A))}/\Z[\pi_p(A)]\right) \in \Z.
 \end{equation}
 In particular, we obtain the divisibility
 $$
 d_{\Q(\pi_p(A))} \mid \disc Q_{\pi_p(A)}(X)
 \; \;
 \text{in}
 \; \; \Z.
 $$
 Since we also have the divisibility
 $$
 Q_{\pi_p(A)}(X) \mid P_{\pi_p(A)}(X)
 \; \; 
 \text{in}
 \; \; 
 \Z[X],
 $$
 we deduce the divisibility relations
 \begin{equation}\label{divisibile-disc}
 d_{\Q(\pi_p(A))}
 \mid
 \disc Q_{\pi_p(A)}(X)
 \mid 
 \disc P_{\pi_p(A)}(X)
 \; \;
 \text{in}
 \; \;
 \Z.
 \end{equation}
 
 On the other hand, we have the inclusion of number fields $\Q(\pi_p(A)) \subseteq K_{A, p}$.
 Since $K_{A, p}$ is the smallest normal extension of $\Q(\pi_p(A))$ in $\overline{\Q}$,
 by comparing  discriminants 
 and recalling \cite[p. 327]{La94},
 we obtain the inequality
 \begin{equation}\label{inequality-disc}
 |d_{K_{A, p}}| \leq |d_{\Q(\pi_p(A))}|^{g! \ 2^{g-1}}.
 \end{equation}

 Putting together 
  (\ref{char-poly-pi_p-disc-upper}),
 (\ref{divisibile-disc}),
 and
 (\ref{inequality-disc}),
 we deduce the inequalities claimed in (ii).
 \end{proof}

Fixing a Frobenius element
$\Frob_p \in \Gal(\overline{\Q}/\Q)$
at $p$,
we obtain 
the $\Z_{\ell}$-linear operator
$\rho_{A, \ell}(\Frob_p)$
on $T_{\ell}(A)$ 
and
the $\F_{\ell}$-linear operator
$\overline{\rho}_{A, \ell}(\Frob_p)$
on $A[\ell]$.
We denote by
$P_{A, p}(X) \in \Z_{\ell}[X]$
the characteristic polynomial of $\rho_{A, \ell}(\Frob_p)$ acting on $T_{\ell}(A)$
and by
${\cal{P}}_{A, p}(X) \in \F_{\ell}[X]$
the characteristic polynomial of $\overline{\rho}_{A, \ell}(\Frob_p)$ acting on $A[\ell]$.
We  denote by
${\cal{Q}}_{A, p}(X) \in \F_{\ell}[X]$
the minimal polynomial of $\overline{\rho}_{A, \ell}(\Frob_p)$ acting on $A[\ell]$.
The latter two polynomials satisfy the following two divisibility relations:
\begin{equation}\label{cayley-hamilton}
{\cal{Q}}_{A, p}(X) \mid {\cal{P}}_{A, p}(X)
\; \;
\text{in}
\; \;
\F_{\ell}[X]
\end{equation}
and
\begin{equation}\label{min-char}
{\cal{P}}_{A, p}(X) \mid {\cal{Q}}_{A, p}(X)^{\infty}
\; \;
\text{in}
\; \;
\F_{\ell}[X]
\end{equation}
(see, for example, \cite[Corollary 7.10, Proposition 7.9, p. 376]{Al09}).
In the next two propositions, we relate
 the polynomials $P_{A, p}(X)$ 
 and
 ${\cal{P}}_{A, p}(X)$  to $P_{\pi_p(A)}(X)$,
and
the polynomial ${\cal{Q}}_{A, p}(X)$ to $Q_{\pi_p(A)}(X)$.

\begin{proposition}\label{char-polys}
Let $A$ be an abelian variety defined over $\Q$, of conductor $N_A$, and of dimension $g$.
Let $p$ and $\ell$ be primes such that $p \nmid \ell N_A$.
Then
\begin{enumerate}
\item[(i)]
$P_{A, p}(X) = P_{\pi_p(A)}(X) \in \Z[X]$;
\item[(ii)]
${\cal{P}}_{A, p}(X) = P_{\pi_p(A)}(X) (\mod \ell) \in \F_{\ell}[X]$.
\end{enumerate}
\end{proposition}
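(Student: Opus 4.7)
The plan is to reduce both parts of the proposition to the single fact that, under the natural reduction isomorphism from $T_{\ell}(A)$ to $T_{\ell}(A_p)$, the action of a Frobenius element $\Frob_p \in \Gal(\overline{\Q}/\Q)$ on $T_{\ell}(A)$ corresponds to the action of the geometric Frobenius endomorphism $\pi_p(A)$ on $T_{\ell}(A_p)$. Since $p \nmid N_A$, the abelian variety $A$ has good reduction at $p$; the N\'eron--Ogg--Shafarevich criterion then guarantees that $\rho_{A, \ell}$ is unramified at $p$ for every $\ell \neq p$, so the statement ``$\Frob_p$ acts on $T_{\ell}(A)$'' is well-defined up to conjugation. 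Using the N\'eron model, one then has a canonical $\Gal(\overline{\F}_p/\F_p)$-equivariant isomorphism $T_{\ell}(A) \xrightarrow{\sim} T_{\ell}(A_p)$ that intertwines $\rho_{A, \ell}(\Frob_p)$ with $\rho_{A_p, \ell}(\mathrm{Frob}_p^{\mathrm{geom}}) = \pi_p(A)$.

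Given this identification, part (i) is immediate: the characteristic polynomial of $\rho_{A, \ell}(\Frob_p)$ on $T_{\ell}(A)$ coincides with the characteristic polynomial of $\pi_p(A)$ acting on $T_{\ell}(A_p)$, which by definition is $P_{\pi_p(A)}(X)$. The fact that $P_{A, p}(X) \in \Z[X]$ (rather than just $\Z_{\ell}[X]$) and that it is independent of $\ell$ is then exactly the content of Proposition \ref{char-poly-pi_p-is-p-Weil}.

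For part (ii), I will reduce the Tate module modulo $\ell$. There is a canonical identification $T_{\ell}(A)/\ell\, T_{\ell}(A) \cong A[\ell]$, compatible with the action of $\Gal(\overline{\Q}/\Q)$, which sends the operator $\rho_{A, \ell}(\Frob_p)$ to $\overline{\rho}_{A, \ell}(\Frob_p)$. Taking characteristic polynomials commutes with reduction modulo $\ell$, so
\[
{\cal{P}}_{A, p}(X) = P_{A, p}(X) \pmod{\ell} = P_{\pi_p(A)}(X) \pmod{\ell} \in \F_{\ell}[X],
\]
where the second equality uses part (i).

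The only substantive input is the comparison of the arithmetic Frobenius on $T_{\ell}(A)$ with the geometric Frobenius on $T_{\ell}(A_p)$; this is classical and can be cited directly from standard references on abelian varieties (e.g.\ \cite[Theorem 19.1, p. 143]{Mi86} as already used in Proposition \ref{char-poly-pi_p-is-p-Weil}, together with the smooth base change/unramifiedness statement for $\ell$-adic Tate modules at primes of good reduction). No serious obstacle is expected; the proof is essentially a bookkeeping exercise once the identification of the two Frobenius actions is invoked.
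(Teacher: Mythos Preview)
Your proposal is correct and follows essentially the same route as the paper: both arguments rest on the canonical isomorphism $T_{\ell}(A)\simeq T_{\ell}(A_p)$ intertwining $\rho_{A,\ell}(\Frob_p)$ with $\pi_p(A)$, and then on the compatibility of this identification with reduction modulo~$\ell$. The only cosmetic difference is that the paper packages both parts into a single commutative square of endomorphism rings, whereas you deduce (ii) from (i) via $T_{\ell}(A)/\ell T_{\ell}(A)\cong A[\ell]$; the content is the same.
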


\begin{proof}
The $\Z_{\ell}$-module isomorphisms
$$T_{\ell}(A_p) \simeq_{\Z_{\ell}} \Z_{\ell}^{2g} \simeq_{\Z_{\ell}} T_{\ell}(A)$$
and
the $\F_{\ell}$-vector space isomorphisms
$$A_{p}[\ell] \simeq_{\F_{\ell}} \F_{\ell}^{2g} \simeq_{\F_{\ell}} A[\ell]$$
give rise to the following commutative diagram
of ring  homomorphisms,
in which the top horizontal row is  also an isomorphism of $\Z_{\ell}$-algebras
and the bottom horizontal row is also an isomorphism of $\F_{\ell}$-algebras:
\begin{center}
\begin{tikzcd}[column sep=large]
\End_{\Z_{\ell}}(T_{\ell}(A))  \arrow[r, "\simeq_{\Z_{\ell}\text{-alg}}"]\arrow[d, twoheadrightarrow]  &\End(T_{\ell}(A_p)) \arrow[d, twoheadrightarrow] 
\\
\End_{\F_{\ell}}(A[\ell]) \arrow[r, "\simeq_{\F_{\ell}\text{-alg}}"] & \End_{\F_{\ell}}(A_p[\ell]).
\end{tikzcd}
\end{center}
Under this diagram, 
by moving to the right and down, 
the operator
$\rho_{A, \ell}(\Frob_p) \in \End_{\Z_{\ell}}(T_{\ell}(A))$ 
is mapped 
to $\left.\pi_p(A)\right|_{{T_{\ell}}(A_p)} \in \End_{\Z_{\ell}} (T_{\ell}(A_p))$
and then to $\left.\pi_p(A)\right|_{A_p[\ell]} \in \End_{\F_{\ell}}(A_p[\ell])$;
by moving down and to the left,
it is mapped to   $\overline{\rho}_{A, \ell}(\Frob_p)  \in \End_{\F_{\ell}}(A[\ell])$
and then again to $\left.\pi_p(A)\right|_{A_p[\ell]} \in \End_{\F_{\ell}}(A_p[\ell])$.
Hence the polynomial relations (i) and (ii) between the various associated characteristic polynomials hold.
\end{proof}

\begin{proposition}\label{cal-Q-divides-Q}
Let $A$ be an abelian variety defined over $\Q$, of conductor $N_A$, and of dimension $g$.
Let $p$ and $\ell$ be primes such that $p \nmid \ell N_A$.
Then
$$
{\cal{Q}}_{A, p}(X) \mid Q_{\pi_p(A)}(X) (\mod \ell)
\; \;
\text{in}
\; \;
\F_{\ell}[X].
$$
\end{proposition}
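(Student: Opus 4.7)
The plan is to mimic the strategy of Proposition \ref{char-polys}, but work with minimal rather than characteristic polynomials, and then use the fact that the integral $\Z$-algebra $\End_{\F_p}(A_p)$ is torsion-free in order to reduce the defining relation of $Q_{\pi_p(A)}(X)$ modulo $\ell$.

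First I would reinvoke the commutative diagram of ring isomorphisms from the proof of Proposition \ref{char-polys}, which identifies the operator $\overline{\rho}_{A,\ell}(\Frob_p) \in \End_{\F_\ell}(A[\ell])$ with the operator $\pi_p(A)|_{A_p[\ell]} \in \End_{\F_\ell}(A_p[\ell])$. Under this identification, the minimal polynomial ${\cal Q}_{A,p}(X)$ of $\overline{\rho}_{A,\ell}(\Frob_p)$ coincides with the minimal polynomial of $\pi_p(A)|_{A_p[\ell]}$. Thus it suffices to show that $Q_{\pi_p(A)}(X) \pmod \ell$ annihilates $\pi_p(A)|_{A_p[\ell]}$, for then the divisibility will follow from the defining property of the minimal polynomial in the principal ideal domain $\F_\ell[X]$.

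Next I would verify that $Q_{\pi_p(A)}(\pi_p(A)) = 0$ already inside the integral endomorphism ring $\End_{\F_p}(A_p)$. By construction, $Q_{\pi_p(A)}(X) \in \Z[X]$ is the minimal polynomial of the $p$-Weil number $\pi_p(A)$ over $\Q$, viewed as an element of the semisimple finite-dimensional $\Q$-algebra $\End^0_{\F_p}(A_p) := \End_{\F_p}(A_p) \otimes_{\Z} \Q$; hence $Q_{\pi_p(A)}(\pi_p(A)) = 0$ in $\End^0_{\F_p}(A_p)$. Since $\End_{\F_p}(A_p)$ is a finitely generated free $\Z$-module that embeds into $\End^0_{\F_p}(A_p)$, the natural map $\End_{\F_p}(A_p) \hookrightarrow \End^0_{\F_p}(A_p)$ is injective; therefore the identity $Q_{\pi_p(A)}(\pi_p(A)) = 0$ already holds in $\End_{\F_p}(A_p)$.

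Finally, I would reduce modulo $\ell$. The canonical ring homomorphism $\End_{\F_p}(A_p) \longrightarrow \End_{\F_\ell}(A_p[\ell])$ sends $\pi_p(A)$ to $\pi_p(A)|_{A_p[\ell]}$ and sends any integer coefficient $n$ to its reduction $n \pmod \ell$. Applying it to the identity $Q_{\pi_p(A)}(\pi_p(A)) = 0$ yields $\bigl(Q_{\pi_p(A)} \pmod \ell\bigr)\bigl(\pi_p(A)|_{A_p[\ell]}\bigr) = 0$ in $\End_{\F_\ell}(A_p[\ell])$. By the definition of the minimal polynomial, ${\cal Q}_{A,p}(X) \mid Q_{\pi_p(A)}(X) \pmod \ell$ in $\F_\ell[X]$, as claimed. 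I do not anticipate a serious obstacle; the only point that requires care is the torsion-freeness of $\End_{\F_p}(A_p)$, which is standard for abelian varieties over a field.
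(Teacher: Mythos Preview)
Your proposal is correct and follows essentially the same approach as the paper's proof: identify $\overline{\rho}_{A,\ell}(\Frob_p)$ with $\pi_p(A)|_{A_p[\ell]}$ via the diagram of Proposition~\ref{char-polys}, observe that $Q_{\pi_p(A)}(X)\pmod\ell$ annihilates this operator, and invoke the defining property of the minimal polynomial. You are in fact slightly more careful than the paper, which simply asserts that $\pi_p(A)|_{A_p[\ell]}$ is a root of $Q_{\pi_p(A)}(X)\pmod\ell$ without explicitly spelling out the passage through the torsion-free ring $\End_{\F_p}(A_p)$.
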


\begin{proof}
Viewing $Q_{\pi_p(A)}(X) (\mod \ell) \in \F_{\ell}[X]$ as a polynomial over the ring of $\F_{\ell}$-linear operators on $A_p[\ell]$,
we see that $\left.\pi_p(A)\right|_{A_p[\ell]}$ is one of its roots.
Recall that, under the isomorphism of $\F_{\ell}$-algebras
$\End_{\F_{\ell}}(A_p[\ell]) \longrightarrow \End_{\F_{\ell}}(A[\ell])$,
the operator $\left.\pi_p(A)\right|_{A_p[\ell]}$
is mapped to
$\overline{\rho}_{A, \ell}(\Frob_p)$.
Then, since ${\cal{Q}}_{A, p}(X)$ is the minimal polynomial of $\overline{\rho}_{A, \ell}(\Frob_p)$,
we deduce that
${\cal{Q}}_{A, p}(X) \mid Q_{\pi_p(A)}(X) (\mod \ell)$
as polynomials in $\F_{\ell}[X]$.
\end{proof}

Finally, we
record the following result which will play an important role in the proofs of our main theorems.

\begin{proposition}\label{key-prop}
Let $A$ be an abelian variety defined over $\Q$, of conductor $N_A$, and of dimension $g$.
Let $p$ and $\ell$ be primes such that $p \nmid \ell N_A$.
Assume that $\ell \nmid c_{A, p}$ (defined in (\ref{c_A_p}))
and that $\ell$ splits completely in $\Q(\pi_p(A))$.
Then the $\F_{\ell}$-linear operator
$\overline{\rho}_{A, \ell}(\Frob_p) \in \End_{\F_{\ell}}(A[\ell])$
is semisimple (that is, the minimal polynomial ${\cal{Q}}_{A, p}(X)$ has distinct roots in ${\overline{\F}}_{\ell}$)
and 
has all its eigenvalues in $\F_{\ell}^{\times}$
(that is,  all the roots of the characteristic polynomial ${\cal{P}}_{A, p}(X)$ are in $\F_{\ell}^{\times}$).
\end{proposition}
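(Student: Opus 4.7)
The plan is to reduce both conclusions to properties of the minimal polynomial $Q_{\pi_p(A)}(X) \in \Z[X]$ of the Frobenius endomorphism $\pi_p(A)$, read modulo $\ell$. By Proposition \ref{cal-Q-divides-Q}, ${\cal Q}_{A,p}(X) \mid Q_{\pi_p(A)}(X) \pmod{\ell}$ in $\F_{\ell}[X]$, and by Proposition \ref{char-polys}(ii), ${\cal P}_{A,p}(X) = P_{\pi_p(A)}(X) \pmod{\ell}$. Hence, to establish semisimplicity together with the location of eigenvalues, it suffices to show that $Q_{\pi_p(A)}(X) \pmod{\ell}$ splits as a product of distinct linear factors over $\F_{\ell}$ with all roots nonzero.

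To obtain distinct roots, I would show that $\ell \nmid \disc Q_{\pi_p(A)}(X)$. The identity (\ref{disc-order}) gives $\disc Q_{\pi_p(A)}(X) = c_{A,p}^{2}\, d_{\Q(\pi_p(A))}$, and the assumption $\ell \nmid c_{A,p}$ reduces the task to checking that $\ell \nmid d_{\Q(\pi_p(A))}$. This is immediate from the hypothesis that $\ell$ splits completely in $\Q(\pi_p(A))$, since complete splitting forces $\ell$ to be unramified in that field.

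To place all these roots inside $\F_{\ell}$, I would pass to the Galois closure $K_{A,p}$ of $\Q(\pi_p(A))$ over $\Q$. Because complete splitting is preserved under taking the compositum of conjugate subfields, $\ell$ splits completely in $K_{A,p}$ as well, so every prime $\mathfrak{L}$ of $K_{A,p}$ above $\ell$ has residue field exactly $\F_{\ell}$. Since all roots of $Q_{\pi_p(A)}(X)$ lie in $K_{A,p}$, their images modulo $\mathfrak{L}$ are elements of $\F_{\ell}$, as required. As an alternative route, the same conclusion follows from the Dedekind--Kummer theorem applied to the order $\Z[\pi_p(A)]$, which is valid precisely because $\ell \nmid c_{A,p}$: the factorization of $Q_{\pi_p(A)}(X) \pmod{\ell}$ into irreducibles then mirrors the factorization of $\ell\, \mathcal{O}_{\Q(\pi_p(A))}$, forcing every irreducible factor to be linear.

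Finally, to rule out zero as a root, I would use that the constant term of ${\cal P}_{A,p}(X) = P_{\pi_p(A)}(X) \pmod{\ell}$ equals $p^{g} \pmod{\ell}$, which is nonzero because $p \neq \ell$; thus $0$ is not an eigenvalue of $\overline{\rho}_{A,\ell}(\Frob_p)$, and in particular not a root of ${\cal Q}_{A,p}(X)$. Combining the three observations, ${\cal Q}_{A,p}(X)$ splits over $\F_{\ell}$ as a product of distinct linear factors with all roots in $\F_{\ell}^{\times}$, which is exactly the claim. The only point needing care is the propagation of complete splitting to the Galois closure $K_{A,p}$ (equivalently, the correct verification of the hypothesis of Dedekind--Kummer via $\ell \nmid c_{A,p}$); beyond this bookkeeping, I do not expect any serious obstacle.
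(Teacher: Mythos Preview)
Your proposal is correct and follows essentially the same route as the paper: both arguments reduce to showing that $Q_{\pi_p(A)}(X)\pmod\ell$ is a product of distinct linear factors in $\F_\ell[X]$ (the paper invokes Dedekind's theorem in one stroke, while you unpack it into the discriminant computation for separability and Dedekind--Kummer/Galois closure for linearity), and then transfer this to ${\cal Q}_{A,p}$ via Proposition~\ref{cal-Q-divides-Q}. One small point: your concluding sentence asserts the claim only for ${\cal Q}_{A,p}$, but the statement also requires the roots of ${\cal P}_{A,p}$ to lie in $\F_\ell^\times$; you should make explicit (as the paper does via \eqref{min-char}) that ${\cal P}_{A,p}$ and ${\cal Q}_{A,p}$ have the same roots, so the conclusion for ${\cal P}_{A,p}$ follows immediately.
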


\begin{proof}
Since $\ell \nmid c_{A, p}$ and $\ell$ splits completely in $\Q(\pi_p(A))$,
by a classical theorem of  Dedekind,
the reduction modulo $\ell$
of the minimal polynomial $Q_{\pi_p(A)}(X)$ of the algebraic integer $\pi_p(A)$
factors into distinct linear factors in $\F_{\ell}[X]$.
Then, using the divisibility relation 
${\cal{Q}}_{A, p}(X) \mid Q_{\pi_p(A)}(X) (\mod \ell)$
as polynomials in $\F_{\ell}[X]$,
proven in Proposition \ref{cal-Q-divides-Q},
we deduce that
${\cal{Q}}_{A, p}(X)$ factors  into distinct linear factors in $\F_{\ell}[X]$.
Since ${\cal{Q}}_{A, p}(X)$ is the minimal polynomial of the $\F_{\ell}$-linear operator
$\overline{\rho}_{A, \ell}(\Frob_p)$ acting on $A[\ell]$,
we obtain that $\overline{\rho}_{A, \ell}(\Frob_p)$ is semisimple. Furthermore, using (\ref{min-char}), we obtain that the characteristic polynomial ${\cal{P}}_{A, p}(X)$ has all its roots in $\F_{\ell}$. 
This means 
that
all 
eigenvalues of 
$\overline{\rho}_{A, \ell}(\Frob_p)$ are in $\F_{\ell}$. Since $\overline{\rho}_{A, \ell}(\Frob_p)$ is invertible, the eigenvalues are in fact in $\F_{\ell}^{\times}$.

\end{proof}

\subsection{A variation on counting primes with a fixed Frobenius trace}

The first step in the proofs of Theorem \ref{main-thm1} and part (i) of Theorem \ref{main-thm3}
is the following result established 
 in analogy with \cite[Lemma 18]{CoWa22}.

\begin{proposition}\label{max-lemma-A-generic}
Let $A$ be an abelian variety defined over $\Q$, of conductor $N_A$, and of dimension $g$.
For each $x > 2$, let $y = y(x) > 2$ and  $u = u(x) > 2$ be such that
\begin{equation}\label{u-less-y}
u \leq y.
\end{equation}
Assume that, for any $\varepsilon > 0$,
  \begin{equation}\label{u-greater-log-y}
u \geq 
 y^{\frac{1}{2}} (\log y)^{2 + \varepsilon} 
\end{equation}
and
\begin{equation}\label{log-x-over-log-y}
\ds\lim_{x \rightarrow \infty} \frac{\log x}{(\log y)^{1 + \varepsilon}} = 0.
\end{equation}
Assume RH, as well as GRH for  the number fields 
$K_{A, p}$
defined in Section 6.1,
where $p \nmid N_A$ is an arbitrary prime.
Then, for any $\varepsilon > 0$,
there exists a constant $c(\varepsilon) > 0$ such that:
\begin{enumerate}
\item[(i)]
for any $t \in \Z$ and any sufficiently large $x$,
we have
\begin{eqnarray}\label{max-lemma-bound-first}
&&
\#\left\{p \leq x: p \nmid N_A, a_{1, p}(A) = t\right\}
\\
&\leq&
c(\varepsilon)
\ds\max_{
y \leq \ell \leq y + u
} 
\#\left\{p \leq x: p \nmid \ell N_A, a_{1, p}(A) = t,
\right.
\nonumber
\\
&&
\hspace*{3.5cm}
\left.
\ell \nmid d_{\Q(\pi_p(A))} \# ({\cal{O}}_{\Q(\pi_p(A))}/\Z[\pi_p(A)]),
\ \ell \ \text{splits completely in} \  \Q(\pi_p(A))
\right\}
\nonumber
\end{eqnarray}
\item[(ii)]
for any $z > 0$ and any sufficiently large $x > 0$,
we have
\begin{eqnarray}\label{max-lemma-bound-second}
&&
\ds\sum_{t \in \Z \atop{|t| < z}}
\#\left\{p \leq x: p \nmid N_A, a_{1, p}(A) = t\right\}
\\
&\leq&
c(\varepsilon)
\ds\max_{
y \leq \ell \leq y + u
} 
\ds\sum_{t \in \Z \atop{|t| \leq z}}
\#\left\{p \leq x: p \nmid \ell N_A, a_{1, p}(A) = t,
\right.
\nonumber
\\
&&
\hspace*{4.3cm}
\left.
\ell \nmid d_{\Q(\pi_p(A))} \# ({\cal{O}}_{\Q(\pi_p(A))}/\Z[\pi_p(A)]),
\ \ell \ \text{splits completely in} \  \Q(\pi_p(A))
\right\}.
\nonumber
\end{eqnarray}
\end{enumerate}
\end{proposition}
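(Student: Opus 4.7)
\textbf{Proof plan for Proposition \ref{max-lemma-A-generic}.}
The strategy is a double counting argument over pairs $(p, \ell)$, where $p$ is a prime contributing to the left-hand side and $\ell \in [y, y+u]$ is an auxiliary prime for which $\ell \nmid d_{\Q(\pi_p(A))} \cdot c_{A,p}$ and $\ell$ splits completely in $\Q(\pi_p(A))$. For each prime $p \leq x$ with $p \nmid N_A$ and $a_{1,p}(A) = t$, set
$$
N_p := \#\bigl\{\ell \text{ prime}: y \leq \ell \leq y+u,\ \ell \nmid d_{\Q(\pi_p(A))}\, c_{A,p},\ \ell \text{ splits completely in } \Q(\pi_p(A))\bigr\}.
$$
Denoting by $S(x,t)$ the left-hand side count of (i) and by $M_\ell$ the inner set of the maximum on the right-hand side, interchanging the order of summation yields
$$
\bigl(\min_p N_p\bigr) \cdot \#S(x,t) \;\leq\; \sum_{p \in S(x,t)} N_p \;=\; \sum_{y \leq \ell \leq y+u} \#M_\ell \;\leq\; \bigl(\pi(y+u)-\pi(y)\bigr)\cdot \max_\ell \#M_\ell.
$$
Since $\pi(y+u)-\pi(y) \ll u/\log y$ by RH (or by PNT with enough room), it suffices to prove the uniform lower bound $N_p \gg_g u/\log y$ for all admissible $p$; then (i) follows with a constant depending only on $g$ and $\varepsilon$. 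Part (ii) is identical: the lower bound on $N_p$ is independent of $t$, so summing the double-counting inequality over $|t| \leq z$ on both sides gives the claim.

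To establish the lower bound on $N_p$, I would apply Theorem \ref{chebotarev-GRH-AHC-PCC}(i) to the Galois extension $K_{A,p}/\Q$ with $\mathcal{C} = \{1\}$, using the standard fact that a rational prime $\ell$ splits completely in $\Q(\pi_p(A))$ if and only if it splits completely in the Galois closure $K_{A,p}$. Invoking Proposition \ref{field-K-pi_p}, the degree satisfies $[K_{A,p}:\Q] \leq g!\, 2^g$ and $\log |d_{K_{A,p}}| \ll_g \log p \leq \log x$, so subtracting the Chebotarev asymptotics at $y+u$ and at $y$ gives
$$
\#\bigl\{y \leq \ell \leq y+u : \ell \text{ splits completely in } K_{A,p}\bigr\}
\;=\; \frac{\pi(y+u)-\pi(y)}{[K_{A,p}:\Q]} + O_{g}\!\bigl(y^{1/2}(\log x + \log y)\bigr).
$$
Hypotheses (\ref{u-less-y}), (\ref{u-greater-log-y}), (\ref{log-x-over-log-y}) are precisely what is needed here: the first ensures $\pi(y+u)-\pi(y) \asymp u/\log y$, while combining (\ref{u-greater-log-y}) with (\ref{log-x-over-log-y}) gives, for $x$ large,
$$
\frac{u}{\log y} \;\geq\; c'(\varepsilon)\, y^{1/2}(\log y)^{1+\varepsilon} \;\gg\; y^{1/2}(\log x + \log y),
$$
so the error term is absorbed by the main term and the count is $\gg_g u/\log y$.

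It remains to discard the $\ell$'s that divide $d_{\Q(\pi_p(A))} \cdot c_{A,p}$. Combining (\ref{char-poly-pi_p-disc-upper}), (\ref{disc-order}), (\ref{c_A_p}) with the divisibility chain $d_{\Q(\pi_p(A))} \mid \disc Q_{\pi_p(A)}(X) \mid \disc P_{\pi_p(A)}(X)$, one obtains $|d_{\Q(\pi_p(A))}|\cdot c_{A,p} \leq (4p)^{C_g}$ for a constant $C_g$. Consequently, the number of prime divisors of this integer lying in $[y, y+u]$ is at most
$$
\frac{\log\bigl(|d_{\Q(\pi_p(A))}| \cdot c_{A,p}\bigr)}{\log y} \;\ll_{g}\; \frac{\log x}{\log y},
$$
which, by (\ref{log-x-over-log-y}) and (\ref{u-greater-log-y}), is $o(u/\log y)$ as $x \to \infty$. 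Hence $N_p \gg_g u/\log y$ uniformly in admissible $p$, which together with the double-counting inequality above completes the proof of both (i) and (ii). The main obstacle is the simultaneous balancing of the three parameters $x, y, u$ so that the effective Chebotarev error term is negligible compared to $u/\log y$ while still allowing $\log x$ to grow with $\log y$, and the precise hypotheses (\ref{u-less-y})--(\ref{log-x-over-log-y}) are exactly calibrated to make this balance work.
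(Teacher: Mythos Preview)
Your proof is correct and follows essentially the same double-counting strategy as the paper: swap the order of summation over pairs $(p,\ell)$, apply the GRH-conditional effective Chebotarev theorem to $K_{A,p}/\Q$ with $\mathcal{C}=\{1\}$ together with the bounds of Proposition~\ref{field-K-pi_p} to get $\gg_g u/\log y$ completely split primes in $[y,y+u]$, and discard the $O_g(\log x/\log y)$ primes dividing $d_{\Q(\pi_p(A))}\,c_{A,p}$ using (\ref{char-poly-pi_p-disc-upper}) and (\ref{disc-order}). The only cosmetic differences are that the paper routes the argument through the auxiliary counts $\pi'_A(x,\ell,t)$ involving $d_{K_{A,p}}$ rather than $d_{\Q(\pi_p(A))}$ (these have the same prime support), and bounds the number of bad $\ell$ by $\nu(c_{A,p})\ll_g \log p$ rather than your sharper $\ll_g \log x/\log y$.
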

\begin{proof}
The proof follows the main steps of that of  \cite[Lemma 9]{CoWa22}, as we now explain.
To simplify the exposition, 
for
$x > 0$,
$t \in \Z$,
and
$\ell$  a rational prime,
we use the notation
\begin{equation}\label{notation-pi-A-x-ell-t}
\pi_A(x, \ell, t)
:=
\#\left\{p \leq x: p \nmid \ell N_A, a_{1, p}(A) = t,
\ell \nmid d_{\Q(\pi_p(A))} c_{A, p},
\ \ell \ \text{splits completely in} \  \Q(\pi_p(A))
\right\}
\end{equation}
and
\begin{equation}\label{notation-pi-prime-A-x-ell-t}
\pi'_A(x, \ell, t)
:=
\#\left\{p \leq x: p \nmid \ell N_A, a_{1, p}(A) = t,
\ell \nmid d_{K_{A, p}} c_{A, p},
\ \ell \ \text{splits completely in} \  K_{A, p}
\right\},
\end{equation}
where
$c_{A, p} = \# ({\cal{O}}_{\Q(\pi_p(A))}/\Z[\pi_p(A)])$, as defined in Section 6.1.
Since we have the  inclusion 
of number fields
$\Q(\pi_p(A)) \subseteq K_{A, p}$, we deduce that $d_{\Q(\pi_p(A))}\mid d_{K_{A, p}}$, which implies that
\begin{equation}\label{pi-versus-pi'}
\pi_A'(x, \ell, t)
\leq
\pi_A(x, \ell, t).
\end{equation}
Therefore, it is enough to prove that 
\begin{equation}\label{max-lemma-bound-first-prime}
\#\left\{p \leq x: p \nmid N_A, a_{1, p}(A) = t\right\}
\leq
c(\varepsilon)
\ds\max_{
y \leq \ell \leq y + u
} 
\pi_A'(x, \ell, t)
\end{equation}
and
\begin{equation}\label{max-lemma-bound-second-prime}
\ds\sum_{t \in \Z \atop{|t| < z}}
\#\left\{p \leq x: p \nmid N_A, a_{1, p}(A) = t\right\}
\leq
c(\varepsilon)
\ds\max_{
y \leq \ell \leq y + u
} 
\ds\sum_{t \in \Z \atop{|t| \leq z}}
\pi_A'(x, \ell, t).
\end{equation}

First, observe that
\begin{equation}\label{max-lemma-av-upper}
\ds\sum_{y \leq \ell \leq y + u}
\pi'_A(x, \ell, t)
\leq
(\pi(y+u) - \pi(y))
\ds\max_{y \leq \ell \leq y + u}
\pi'_A(x, \ell, t).
\end{equation}

Next, observe that 
\begin{eqnarray}\label{max-lemma-av-lower-1}
\ds\sum_{y \leq \ell \leq y + u}
\pi'_A(x, \ell, t)
&=&
\ds\sum_{
p \leq x
\atop{
p \nmid N_A
\atop{
a_{1, p}(A) = t
}
}
}
\#\left\{
y \leq \ell \leq y+u:
\ell \nmid p \ d_{K_{A, p}} c_{A, p},
\ \ell \ \text{splits completely in} \ K_{A, p}
\right\}
\\
&=&
\ds\sum_{
p \leq x
\atop{
p \nmid N_A
\atop{
a_{1, p}(A) = t
}
}
}
\#\left\{
y \leq \ell \leq y+u:
\ell \nmid p \ d_{K_{A, p}},
\ \ell \ \text{splits completely in} \ K_{A, p}
\right\}
\nonumber
\\
&-&
\ds\sum_{
p \leq x
\atop{
p \nmid N_A
\atop{
a_{1, p}(A) = t
}
}
}
\#\left\{
y \leq \ell \leq y+u:
\ell \nmid p \ d_{K_{A, p}}, 
\ell \mid  c_{A, p},
\ \ell \ \text{splits completely in} \ K_{A, p}
\right\}
\nonumber
\\
&\geq&
\ds\sum_{
p \leq x
\atop{
p \nmid N_A
\atop{
a_{1, p}(A) = t
}
}
}
\#\left\{
y \leq \ell \leq y+u:
\ell \nmid p \ d_{K_{A, p}},
\ \ell \ \text{splits completely in} \ K_{A, p}
\right\}
\nonumber
\\
&-&
\ds\sum_{
p \leq x
\atop{
p \nmid N_A
\atop{
a_{1, p}(A) = t
}
}
}
\#\left\{
y \leq \ell \leq y+u:
\ell \mid  c_{A, p}
\right\}
\nonumber
\\
&\geq&
\ds\sum_{
p \leq x
\atop{
p \nmid N_A
\atop{
a_{1, p}(A) = t
}
}
}
\#\left\{
y \leq \ell \leq y+u:
\ell \nmid p \ d_{K_{A, p}},
\ \ell \ \text{splits completely in} \ K_{A, p}
\right\}
\nonumber
\\
&-&
\ds\sum_{
p \leq x
\atop{
p \nmid N_A
\atop{
a_{1, p}(A) = t
}
}
}
\nu(c_{A, p}),
\nonumber
\end{eqnarray}
where, for an integer $m \geq 1$, we write $\nu(m)$ for the number of its distinct prime factors.

Using the bound $\nu(m) \leq \frac{\log m}{\log 2}$,
we obtain that
$$
\nu(c_{A, p}) \leq \frac{\log c_{A, p}}{\log 2}.
$$
Using (\ref{char-poly-pi_p-disc-upper}), (\ref{disc-order}) and  (\ref{divisibile-disc}), we obtain  that
$$c_{A, p}^2  \ll (4 p)^{g (2g-1)}.$$
Therefore,
\begin{equation*}\label{disc-index-upper-log}
\nu(c_{A, p}) \ll_{g} \log p. 
\end{equation*}

Using this estimate in the last line of the sequence of inequalities  (\ref{max-lemma-av-lower-1}),
we obtain that
\begin{eqnarray}\label{max-lemma-av-lower-2}
\ds\sum_{y \leq \ell \leq y + u}
\pi'_A(x, \ell, t)
&\geq&
\ds\sum_{
p \leq x
\atop{
p \nmid N_A
\atop{
a_{1, p}(A) = t
}
}
}
\#\left\{
y \leq \ell \leq y+u:
\ell \nmid p \ d_{K_{A, p}},
\ \ell \ \text{splits completely in} \ K_{A, p}
\right\}
\\
&+&
\O_{g}
\left(
\pi_A(x, t) \log x
\right).
\nonumber
\end{eqnarray}

Now we focus on the summands
$\#\left\{
y \leq \ell \leq y+u:
\ell \nmid p \ d_{K_{A, p}},
\ell \ \text{splits completely in} \ K_{A, p}
\right\}$
appearing in (\ref{max-lemma-av-lower-2}).
Recalling that we are assuming that GRH holds for each of the fields $K_{A, p}$,
from the effective version of the Chebotarev Density Theorem
stated in part (i) of Theorem  \ref{cheb-LaOd} we know that
\begin{eqnarray}\label{effective-Cheb-applied}
\#\left\{
y \leq \ell \leq y+u:
\ell \nmid p \ d_{K_{A, p}},
\ell \ \text{splits completely in} \ K_{A, p}
\right\}
&=&
\frac{1}{[K_{A, p} : \Q]}
(\pi(y+u) - \pi(y))
\\
&+&
E_1(y, u, K_{A, p})
+
E_2(y, u, K_{A, p}),
\nonumber
\end{eqnarray}
where
the terms 
$E_1(y, u, K_{A, p})$,
$E_2(y, u, K_{A, p})$
are real valued functions of $y, u,$ and $p$,
that satisfy the upper bounds
\begin{equation}\label{bound-for-E1}
|E_1(y, u, K_{A, p})|
\leq
c_1 (y+u)^{\frac{1}{2}}
\left(
\frac{\log |d_{K_{A, p}}|}{[K_{A, p} : \Q]}
+
\log (y+u)
\right),
\end{equation}
\begin{equation}\label{bound-for-E2}
|E_2(y, u, K_{A, p})|
\leq
c_2 y^{\frac{1}{2}}
\left(
\frac{\log |d_{K_{A, p}}|}{[K_{A, p} : \Q]}
+
\log y
\right),
\end{equation}
with $c_1$, $c_2$ some absolute positive constants.

To obtain an upper bound for the quotient
$\frac{\log |d_{K_{A, p}}|}{[K_{A, p} : \Q]}$,
we invoke
inequality
 (\ref{hensel}):
$$
\frac{\log |d_{K_{A, p}}|}{[K_{A, p} : \Q]}
\leq
\ds\sum_{\ell \mid d_{K_{A, p}}} \log \ell
+
\log [K_{A, p} : \Q]
\leq \log |d_{K_{A, p}}| + \log [K_{A, p} : \Q].
$$
We recall from Proposition \ref{field-K-pi_p} that $[K_{A, p} : \Q] \leq g!  \ 2^g$ and $|d_{K_{A, p}}| \leq (4 p)^{g! \  2^{g-1} (2g-1) g}$. Therefore
$$
\frac{\log |d_{K_{A, p}}|}{[K_{A, p} : \Q]}
\leq
c_3(g)
\log p
$$
for some positive constant $c_3(g)$ that depends on $g$, but not on $p$. 
Using this  bound in (\ref{bound-for-E1}) - (\ref{bound-for-E2}) and then in (\ref{effective-Cheb-applied}),
we obtain that
\begin{eqnarray}\label{effective-Cheb-applied-again}
&&
\#\left\{
y \leq \ell \leq y+u:
\ell \nmid p \ d_{K_{A, p}},
\ \ell \ \text{splits completely in} \ K_{A, p}
\right\}
\\
&=&
\frac{1}{[K_{A, p} : \Q]}
(\pi(y+u) - \pi(y))
+
\O_g\left(
(y+u)^{\frac{1}{2}}
\log (x y)
\right)
\nonumber
\\
&\geq&
\frac{1}{2^g g!}
(\pi(y+u) - \pi(y))
\nonumber
+
\O_g\left(
(y+u)^{\frac{1}{2}}
\log (x y)
\right).
\nonumber
\end{eqnarray}

We now derive a lower bound for $\pi(y+u)-\pi(y)$, where the parameter $u$,  as a function of $y$, the parameter $u$ satisfies (\ref{u-less-y})
and (\ref{u-greater-log-y}).
By the Prime Number Theorem under RH, for any $\varepsilon>0$ (which we choose arbitrarily and keep fixed), we have 
\[
\pi(y+u)-\pi(y)\gg \frac{u}{\log (y+u)}\geq  c_4(\varepsilon) \frac{u}{\log u} > 0
\]
for some positive constant $c_4(\varepsilon)$ depending only on $\varepsilon$ (and not on $u$ or $y$).

Recalling 
(\ref{max-lemma-av-lower-2}), 
we deduce that
\begin{eqnarray}\label{max-lemma-av-lower-3}
\ds\max_{y \leq \ell \leq y + u}
\pi'_A(x, \ell, t)
&\geq&
c_{5}(g, \varepsilon)
\pi_A(x, t)
+
\O_{g, \varepsilon}
\left(
\frac{
(y+u)^{\frac{1}{2}} 
\log (x y) \log u}{u}
\pi_A(x, t) 
\right)
\nonumber
\end{eqnarray}
for some positive constant $c_{5}(g, \varepsilon)$ that depends on $g$ and $\varepsilon$.
Invoking (\ref{u-less-y}), (\ref{u-greater-log-y}), and (\ref{log-x-over-log-y}), we see that, as $x \rightarrow \infty$,
the $\O_{g, \varepsilon}$-term above is $\o(\pi_A(x, t))$.
This completes the proof of (\ref{max-lemma-bound-first}).

To prove (\ref{max-lemma-bound-second}), we use the same argument as for (\ref{max-lemma-bound-first}),
except for replacing 
$\#\left\{p \leq x: p \nmid N_A, a_{1, p}(A) = t\right\}$
with
$\#\left\{p \leq x: p \nmid N_A,| a_{1, p}(A)| \leq z\right\}$.
\end{proof}

\begin{remark}
When invoking this proposition in the proofs of Theorems 1 and 3, we fix $\varepsilon>0$ and take
$y(x)=\frac{x^{\delta}}{(\log x)^{2\delta}}$ and $y(x)= \frac{x^{\delta}}{(\log x)^\varepsilon}$, respectively, for some $\delta\in (0, 1)$.  
 We then choose $u(x)$ to be any function satisfying
 $y(x)^{\frac{1}{2}}(\log y(x))^{2+\varepsilon}\leq u(x) \leq y(x)$.  These choices ensure that assumptions (\ref{u-less-y}), (\ref{u-greater-log-y}), and (\ref{log-x-over-log-y})  are satisfied.

\end{remark}

\subsection{Proof of Theorem \ref{main-thm1} for arbitrary $t \in \Z$}

Let $A$ be an abelian variety defined over $\Q$, of conductor $N_A$, and of dimension $g$.
Let $t \in \Z$.
Let $x > 2$ be a real number that goes to infinity.
Our goal in this subsection is to prove the upper bound
for $\pi_A(x, t) = \#\{p \leq x: p \nmid N_A, a_{1, p}(A) = t\}$
claimed in Theorem \ref{main-thm1}, under the main assumptions 
(\ref{thm1-GSp}) and (\ref{RH-GRH}) 
stated below.

It is known that we may always choose a polarization $e$ on $A$, which we now do,
and that, for any sufficiently large prime $\ell$, 
the polarization $e$ gives rise to 
a non-degenerate alternating bilinear form
$e_{\ell}$ on $T_{\ell}(A)$, 
with respect to which
the group 
$\GSp(T_{\ell}(A), e_{\ell})$ of symplectic similitudes 
contains 
 the image $\Im \rho_{A, \ell}$  of $\rho_{A, \ell}$ 
(see \cite[p. 34]{Se85}).
Upon choosing an isomorphism
 $\GSp(T_{\ell}(A), e_{\ell})\simeq \GSp_{2g}(\Z_{\ell})$,
 we may thus assume that
 $\Im \rho_{A, \ell} \leq \GSp_{2g}(\Z_{\ell})$, and hence that
 $\Im \overline{\rho}_{A, \ell} \leq \GSp_{2g}(\F_{\ell})$.

We assume that,  for any sufficiently large prime $\ell$, the residual modulo $\ell$ Galois representation 
$\overline{\rho}_{A, \ell}$ 
has  image  isomorphic to 
$\GSp_{2g}(\F_{\ell})$, 
which implies that
\begin{equation}\label{thm1-GSp}
\Gal(\Q(A[\ell])/\Q) \simeq \GSp_{2g}(\F_{\ell}).
\end{equation}

Considering the subextensions of $\Q(A[\ell])$ fixed by the subgroups $U_{2g}(\F_{\ell})$ and $B_{2g}(\F_{\ell})$ of $\GSp_{2g}(\F_{\ell})$,
$$
\Q \subseteq \Q(A[\ell])^{B_{2g}(\F_{\ell})} \subseteq \Q(A[\ell])^{U_{2g}(\F_{\ell})} \subseteq \Q(A[\ell]),
$$
and recalling that, 
by part (ii) of Proposition \ref{groups-normal},
 $U_{2g}(\F_{\ell})$ is a normal subgroup of $B_{2g}(\F_{\ell})$,
 we obtain the following Galois group structures: 
\begin{equation}\label{thm1-B}
\Gal(\Q(A[\ell])/\Q(A[\ell])^{B_{2g}(\F_{\ell})}) \simeq B_{2g}(\F_{\ell}),
\end{equation}
\begin{equation}\label{thm1-U}
\Gal(\Q(A[\ell])/\Q(A[\ell])^{U_{2g}(\F_{\ell})}) \simeq U_{2g}(\F_{\ell}),
\end{equation}
\begin{equation}\label{thm1-B/U}
\Gal(\Q(A[\ell])^{U_{2g}(\F_{\ell})}/\Q(A[\ell])^{B_{2g}(\F_{\ell})}) \simeq B_{2g}(\F_{\ell})/U_{2g}(\F_{\ell}).
\end{equation}

We assume that
\begin{equation}\label{RH-GRH}
\text{GRH holds}.
\end{equation}
More precisely, we assume the validity of RH and of GRH for the Dedekind zeta functions 
of the number fields $\Q(A[\ell])^{U_{2g}(\F_{\ell})}$ for all sufficiently large primes $\ell$,
as well as
of the number fields $K_{A, p}$ for all primes $p \nmid N_A$.

By (\ref{max-lemma-bound-first}) of Proposition \ref{max-lemma-A-generic},
for any real numbers
$y = y(x) > 2$ and $u = u(x) > 2$
 that depend on and grow with $x$ 
 and that
satisfy
(\ref{u-less-y}),
(\ref{u-greater-log-y}),
and
(\ref{log-x-over-log-y}),
we know that, for any $\varepsilon > 0$, there exists $c(\varepsilon) > 0$ such that
\begin{eqnarray}\label{max-lemma-bound-first-proof}
\pi_A(x, t)
&\leq&
c(\varepsilon)
\ds\max_{
y \leq \ell \leq y + u
} 
\#\left\{p \leq x: p \nmid \ell N_A, a_{1, p}(A) = t,
\right.
\nonumber
\\
&&
\hspace*{3.5cm}
\left.
\ell \nmid d_{\Q(\pi_p(A))} \# ({\cal{O}}_{\Q(\pi_p(A))}/\Z[\pi_p(A)]),
\ \ell \ \text{splits completely in} \  \Q(\pi_p(A))
\right\}.
\end{eqnarray}
We will use this inequality with $x$ sufficiently large
 so that any prime $\ell$ satisfying $y \leq \ell \leq y + u$
is itself sufficiently large 
and (\ref{thm1-GSp}) holds.

By observation (\ref{galois-repres-mod-n-A-inj}) and Proposition \ref{key-prop},
any prime $p$ for which
$\ell \nmid d_{\Q(\pi_p(A))} \# ({\cal{O}}_{\Q(\pi_p(A))}/\Z[\pi_p(A)])$
and
$\ell$ splits completely in  $\Q(\pi_p(A))$
has the property that,
for any prime ideal $\mathfrak{p}$ of $\Q(A[\ell])$ that lies over $p$,
we have that
the matrix $\overline{\rho}_{A, \ell} \left(\left(\frac{\Q(A[\ell])/\Q}{\mathfrak{p}}\right)\right) \in \GL_{2g}(\F_{\ell})$
is semisimple 
and
has all its eigenvalues in $\F_{\ell}$.
By part (ii) of Proposition \ref{char-polys},
any prime $p \nmid \ell N_A$ for which  $a_{1, p}(A) = t$
has the property that,
for any prime ideal $\mathfrak{p}$ of $\Q(A[\ell])$ that lies over $p$,
we have
$\tr \overline{\rho}_{A, \ell} \left(\left(\frac{\Q(A[\ell])/\Q}{\mathfrak{p}}\right)\right) = -t (\mod \ell)$.
Thus, every prime $p$ counted on the right hand side of inequality (\ref{max-lemma-bound-first-proof})
satisfies that,
for any prime ideal $\mathfrak{p}$ of $\Q(A[\ell])$ that lies over $p$,
$$
\overline{\rho}_{A, \ell} \left(\left(\frac{\Q(A[\ell])/\Q}{\mathfrak{p}}\right)\right) \in {\cal{C}}^{ss}(\ell, t).
$$
Combining these observations with (\ref{max-lemma-bound-first-proof}),
we obtain  the inequality
\begin{eqnarray}\label{max-lemma-bound-first-proof-cheb}
\pi_A(x, t)
&\leq&
c(\varepsilon)
\max_{y \leq \ell \leq y + u}
\pi_{{\cal{C}}^{ss}(\ell, t)}(x, \Q(A[\ell])/\Q).
\end{eqnarray}

We estimate 
$\pi_{{\cal{C}}^{ss}(\ell, t)}(x, \Q(A[\ell])/\Q)$
by invoking Theorem \ref{functorial-quotient-group}
with
$K = \Q$,
$L = \Q(A[\ell])$,
$H = B_{2g}(\F_{\ell})$,
$N = U_{2g} (\F_{\ell})$,
${\cal{C}}_1 = {\cal{C}}^{ss}(\ell, t)$,
and
${\cal{C}}_2 = {\cal{C}}(\ell, t)$.
In this case, the hypotheses of Theorem  \ref{functorial-quotient-group}
hold thanks 
to parts (iii) and (iv) of Proposition \ref{propr-conj-sets},
to Proposition \ref{groups-abelian},
and to our GRH assumption. 
We obtain that
\begin{eqnarray*}
\pi_{\cal{C}^{ss}(\ell, t)}(x, \Q(A[\ell])/\Q) 
&\ll&
\frac{\#\widehat{\cal{C}_B(\ell, t)} \cdot \#U_{2g}(\F_{\ell})}{\#B_{2g}(\F_{\ell})} \pi(x)
\\
&+&
\left(\#\widehat{{\cal{C}_B(\ell, t)}}\right)^{\frac{1}{2}} \frac{x^{\frac{1}{2}}}{\log x} 
\ \left[\Q(A[\ell])^{B_{2g}(\F_{\ell})} :\Q\right]\log M\left(\Q(A[\ell])^{U_{2g}(\F_{\ell})} /\Q(A[\ell])^{B_{2g}(\F_{\ell})}\right)
\\
&+&
\log M(\Q(A[\ell])/\Q).
\end{eqnarray*}
Using the Chebyshev bound
$\pi(x) \ll \frac{x}{\log x}$,
 the formulae for
$\#\GSp_{2g}(\F_{\ell})$,
$\#B_{2g}(\F_{\ell})$,
$\# U_{2g}(\F_{\ell})$
recorded in Proposition \ref{counting-G(l)-etc},
and the estimate for
$\#\widehat{{\cal{C}}_B(\ell, t)}$
given in Proposition  \ref{counting-C(l)-etc},
we deduce that, under GRH, 
\begin{eqnarray}\label{upper-bound-C^ss-one}
\pi_{\cal{C}^{ss}(\ell, t)}(x, \Q(A[\ell])/\Q) 
&\ll&
\frac{1}{\ell} \cdot \frac{x}{\log x}
+
\ell^{g^2 + \frac{g}{2}} 
\cdot 
\frac{x^{\frac{1}{2}}}{\log x} 
\cdot 
\log M\left(\Q(A[\ell])^{U_{2g}(\F_{\ell})} /\Q(A[\ell])^{B_{2g}(\F_{\ell})}\right)
\\
&+&
\log M(\Q(A[\ell])/\Q).
\nonumber
\end{eqnarray}

To estimate $\log M(\Q(A[\ell])/\Q)$, we see that, by definition,
\begin{equation}\label{log-M-A-one}
\log M(\Q(A[\ell])/\Q)
=
\log 2
+
\log [\Q(A[\ell]) : \Q]
+ \ds\sum_{p \mid d_{\Q(A[\ell])}} \log p.
\end{equation}

Combining (\ref{neron-ogg-sha})
 and (\ref{log-M-A-one}), and  using the first formula in Proposition \ref{counting-G(l)-etc},
we derive that
\begin{equation}\label{log-M-A-three}
\log M(\Q(A[\ell])/\Q)
\ll_g \log (\ell N_A).
\end{equation}

To estimate $\log M\left(\Q(A[\ell])^{U_{2g}(\F_{\ell})} /\Q(A[\ell])^{B_{2g}(\F_{\ell})}\right)$, we proceed similarly
and derive that
\begin{equation}\label{log-M-A-U-B}
\log M\left(\Q(A[\ell])^{U_{2g}(\F_{\ell})} /\Q(A[\ell])^{B_{2g}(\F_{\ell})}\right)
\ll_g \log (\ell N_A).
\end{equation}

Putting together (\ref{upper-bound-C^ss-one}), (\ref{log-M-A-three}), and (\ref{log-M-A-U-B}),
we obtain the  upper bound
\begin{eqnarray}\label{upper-bound-C^ss-two}
\pi_{\cal{C}^{ss}(\ell, t)}(x, \Q(A[\ell])/\Q) 
&\ll_g&
\frac{1}{\ell} \cdot \frac{x}{\log x}
+
\ell^{g^2 + \frac{g}{2}} 
\cdot 
\frac{x^{\frac{1}{2}}}{\log x} 
\cdot 
\log (\ell N_A),
\end{eqnarray}
which is valid for any sufficiently large prime $\ell$.
Then, recalling (\ref{max-lemma-bound-first-proof-cheb}), we obtain that
\begin{eqnarray}\label{max-lemma-bound-first-proof-cheb-in-ell}
\pi_A(x, t)
&\leq&
c(A, \varepsilon)
\ds\max_{
y \leq \ell \leq y + u
} 
\left(
\frac{1}{\ell} \cdot \frac{x}{\log x}
+
\ell^{g^2 + \frac{g}{2}} 
\cdot 
\frac{x^{\frac{1}{2}}}{\log x} 
\cdot 
\log \ell
\right)
\end{eqnarray}
for some positive constant $c(A, \varepsilon)$ that depends on $A$ and $\varepsilon$.

 Choose
\begin{equation*}\label{choice-y-thm1}
y(x) 
:= 
\frac{
x^{\frac{1}{2 g^2 + g + 2}}
}{
(\log x)^{\frac{2}{2 g^2 + g + 2}} 
}.
\end{equation*}
Fix an arbitrary $\varepsilon>0$ and choose $u(x):=y(x)^{\frac{1}{2}}(\log y(x))^{2+\varepsilon}$. With these choices of $y$ and $u$, the assumptions \eqref{u-less-y}, \eqref{u-greater-log-y}, and \eqref{log-x-over-log-y} of Proposition \eqref{max-lemma-A-generic} are satisfied.
Furthermore, the choice of $y$  minimizes the right hand side of (\ref{max-lemma-bound-first-proof-cheb-in-ell}) by making the two terms in (\ref{max-lemma-bound-first-proof-cheb-in-ell}) asymptotically of the same order.

Noting that  $y\leq \ell \leq y+u\leq 2y$, 
we deduce that
\begin{equation*}
\pi_A(x, t) \ll_{A} \frac{x^{1 - \frac{1}{2g^2+g+2}}}{(\log x)^{1 - \frac{2}{2g^2+g+2}}}.
\end{equation*}
This completes the proof of Theorem \ref{main-thm1} for arbitrary $t$.

\subsection{Proof of Theorem \ref{main-thm1} for $t = 0$}

Let $A$ be an abelian variety defined over $\Q$, of conductor $N_A$, and of dimension $g$.
Let $x > 2$ be a real number that goes to infinity.
Our goal in this subsection is to prove the upper bound
for $\pi_A(x, 0) = \#\{p \leq x: p \nmid N_A, a_{1, p}(A) = 0\}$
claimed in Theorem \ref{main-thm1}, under the same two main assumptions 
(\ref{thm1-GSp}) and (\ref{RH-GRH})  
as in Subsection 6.3,
that is,
under the assumption 
that for any sufficiently large prime $\ell$, 
the   residual modulo $\ell$
Galois representation $\overline{\rho}_{A, \ell}$ 
has image isomorphic to  $\GSp_{2g}(\F_{\ell})$ 
and 
under the assumption of
the validity 
of RH and of GRH for the Dedekind zeta functions 
of the number fields $\Q(A[\ell])^{U_{2g}(\F_{\ell})}$ for all sufficiently large primes $\ell$,
as well as
of the number fields $K_{A, p}$ for all primes $p \nmid N_A$.

Considering the subextensions of $\Q(A[\ell])$ fixed by the subgroups $U'_{2g}(\F_{\ell})$ and $B_{2g}(\F_{\ell})$ of $\GSp_{2g}(\F_{\ell})$,
$$
\Q \subseteq \Q(A[\ell])^{B_{2g}(\F_{\ell})} \subseteq \Q(A[\ell])^{U'_{2g}(\F_{\ell})} \subseteq \Q(A[\ell]),
$$
and recalling that, 
by part  (iii) of Proposition \ref{groups-normal},
 $U'_{2g}(\F_{\ell})$ is a normal subgroup of $B_{2g}(\F_{\ell})$,
 in addition to (\ref{thm1-GSp}) and (\ref{thm1-B}),
 we obtain the following Galois group structures
\begin{equation}\label{thm1-U'}
\Gal(\Q(A[\ell])/\Q(A[\ell])^{U'_{2g}(\F_{\ell})}) \simeq U'_{2g}(\F_{\ell}),
\end{equation}
\begin{equation}\label{thm1-B/U'}
\Gal(\Q(A[\ell])^{U'_{2g}(\F_{\ell})}/\Q(A[\ell])^{B_{2g}(\F_{\ell})}) \simeq B_{2g}(\F_{\ell})/U'_{2g}(\F_{\ell}),
\end{equation}
where, by Proposition \ref{groups-abelian}, the quotient $B_{2g}(\F_{\ell})/U'_{2g}(\F_{\ell})$ is abelian. 

Proceeding identically to the proof given in Subsection 6.3, we obtain that
 (\ref{max-lemma-bound-first-proof}) leads to the inequality
\begin{eqnarray}\label{max-lemma-bound-first-proof-cheb-zero}
\pi_A(x, 0)
&\leq&
c(\varepsilon)
\ds\max_{
y \leq \ell \leq y + u
} 
\pi_{{\cal{C}}^{ss}(\ell, 0)}(x, \Q(A[\ell])/\Q),
\end{eqnarray}
where
$\varepsilon > 0$ is arbitrary 
and
where 
$y = y(x) > 2$ and $u = u(x) > 2$
are arbitrary real numbers
 that depend on and grow with $x$ 
 and that
satisfy
(\ref{u-less-y}),
(\ref{u-greater-log-y}),
and
(\ref{log-x-over-log-y}).
Then we estimate 
$\pi_{{\cal{C}}^{ss}(\ell, 0)}(x, \Q(A[\ell])/\Q)$
by invoking Theorem \ref{functorial-quotient-group}
with
$K = \Q$,
$L = \Q(A[\ell])$,
$H = B_{2g}(\F_{\ell})$,
$N = U'_{2g} (\F_{\ell})$,
${\cal{C}}_1 = {\cal{C}}^{ss}(\ell, 0)$,
and
${\cal{C}}_2 = {\cal{C}}(\ell, 0)$.
Similarly to the case considered in Subsection 6.3, the hypotheses of Theorem  \ref{functorial-quotient-group}
hold thanks 
to parts (iii) and (iv) of Proposition \ref{propr-conj-sets},
to Proposition \ref{groups-abelian},
and to our GRH assumption. 
We obtain that
\begin{eqnarray*}
\pi_{\cal{C}^{ss}(\ell, 0)}(x, \Q(A[\ell])/\Q) 
&\ll&
\frac{\#\widehat{\cal{C'}_B(\ell, 0)} \cdot \#U'_{2g}(\F_{\ell})}{\#B_{2g}(\F_{\ell})} \pi(x)
\\
&+&
\left(\#\widehat{{\cal{C'}_B(\ell, 0)}}\right)^{\frac{1}{2}} \frac{x^{\frac{1}{2}}}{\log x} 
\ \left[\Q(A[\ell])^{B_{2g}(\F_{\ell})} :\Q\right]\log M\left(\Q(A[\ell])^{U'_{2g}(\F_{\ell})} /\Q(A[\ell])^{B_{2g}(\F_{\ell})}\right)
\\
&+&
\log M(\Q(A[\ell])/\Q).
\end{eqnarray*}
Using the Chebyshev bound
$\pi(x) \ll \frac{x}{\log x}$,
 the formulae for
$\#\GSp_{2g}(\F_{\ell})$,
$\#B_{2g}(\F_{\ell})$,
$\# U'_{2g}(\F_{\ell})$
recorded in Proposition \ref{counting-G(l)-etc},
and the estimate for
$\#\widehat{{\cal{C'}}_B(\ell, 0)}$
given in Proposition  \ref{counting-C(l)-etc},
we deduce that
\begin{eqnarray}\label{upper-bound-C^ss-one-zero}
\pi_{\cal{C}^{ss}(\ell, 0)}(x, \Q(A[\ell])/\Q) 
&\ll&
\frac{1}{\ell} \cdot \frac{x}{\log x}
+
\ell^{g^2 + \frac{g-1}{2}} 
\cdot 
\frac{x^{\frac{1}{2}}}{\log x} 
\cdot 
\log M\left(\Q(A[\ell])^{U'_{2g}(\F_{\ell})} /\Q(A[\ell])^{B_{2g}(\F_{\ell})}\right)
\\
&+&
\log M(\Q(A[\ell])/\Q).
\nonumber
\end{eqnarray}
Using (\ref{log-M-A-three}) and the estimate 
$\log M\left(\Q(A[\ell])^{U'_{2g}(\F_{\ell})} /\Q(A[\ell])^{B_{2g}(\F_{\ell})}\right)
\ll_g \log (\ell N_A)$,
obtained mutatis mutandis,
we infer that
\begin{eqnarray}\label{upper-bound-C^ss-two-zero}
\pi_{\cal{C}^{ss}(\ell, 0)}(x, \Q(A[\ell])/\Q) 
&\ll_g&
\frac{1}{\ell} \cdot \frac{x}{\log x}
+
\ell^{g^2 + \frac{g-1}{2}} 
\cdot 
\frac{x^{\frac{1}{2}}}{\log x} 
\cdot 
\log (\ell N_A).
\end{eqnarray}
Recalling (\ref{max-lemma-bound-first-proof-cheb-zero}), we obtain that 
\begin{eqnarray}\label{max-lemma-bound-first-proof-cheb-in-el-zerol}
\pi_A(x, 0)
&\leq&
c(A, \varepsilon)
\ds\max_{
y \leq \ell \leq y + u
} 
\left(
\frac{1}{\ell} \cdot \frac{x}{\log x}
+
\ell^{g^2 + \frac{g-1}{2}} 
\cdot 
\frac{x^{\frac{1}{2}}}{\log x} 
\cdot 
\log \ell
\right)
\end{eqnarray}
for some positive constant $c(A, \varepsilon)$ that depends on $A$ and $\varepsilon$.

Choose
\begin{equation*}\label{choice-y-thm1-zero}
y(x) 
:= 
\frac{
x^{\frac{1}{2 g^2 + g + 1}}
}{
(\log x)^{\frac{2}{2 g^2 + g + 1}}
}.
\end{equation*}
Fix an arbitrary $\varepsilon>0$, and choose $u(x):=y(x)^{\frac{1}{2}}(\log y(x))^{2+\varepsilon}$. Note that 
assumptions  (\ref{u-less-y}), (\ref{u-greater-log-y}), and (\ref{log-x-over-log-y}) of Proposition \ref{max-lemma-A-generic} are satisfied. 
Furthermore, noting that  $y\leq \ell \leq y+u\leq 2y$,  
we deduce that
\begin{equation*}
\pi_A(x, t) \ll_{A} \frac{x^{1 - \frac{1}{2g^2+g+1}}}{(\log x)^{1 - \frac{2}{2g^2+g+1}}}.
\end{equation*}
This completes the proof of Theorem \ref{main-thm1} for $t = 0$.

\subsection{Proof of Theorem \ref{main-thm2} for arbitrary $t \in \Z$}

Let $A$ be an abelian variety defined over $\Q$, of conductor $N_A$, and of dimension $g$.
Let $x > 2$ be a real number that goes to infinity.
Our goal in this subsection is to prove the upper bound
for $\pi_A(x, t) = \#\{p \leq x: p \nmid N_A, a_{1, p}(A) = t\}$
claimed in Theorem \ref{main-thm2}, under assumptions 
(\ref{thm1-GSp}) and (\ref{RH-GRH}), as in Subsection 6.3,
together  with the assumptions that AHC and PCC hold for the extension $\Q(A[\ell])/\Q$,
 where $\ell$ is a sufficiently large arbitrary prime.

As in Section 6.3, we recall that,
by part (ii) of Proposition \ref{char-polys},
any prime $p \nmid \ell N_A$ for which  $a_{1, p}(A) = t$
has the property that,
for any prime ideal $\mathfrak{p}$ of $\Q(A[\ell])$ that lies over $p$,
$\tr \overline{\rho}_{A, \ell} \left(\left(\frac{\Q(A[\ell])/\Q}{\mathfrak{p}}\right)\right) = -t (\mod \ell)$.
Thus, every prime $p$ counted in $\pi_A(x, t)$
satisfies that,
for any prime ideal $\mathfrak{p}$ of $\Q(A[\ell])$ that lies over $p$,
$$
\overline{\rho}_{A, \ell} \left(\left(\frac{\Q(A[\ell])/\Q}{\mathfrak{p}}\right)\right) \in {\cal{C}}_0(\ell, t).
$$
As such, we have the inequality
\begin{equation}\label{direct-mod-ell}
\pi_A(x, t)
\leq
\pi_{{\cal{C}}_0(\ell, t)}(x, \Q(A[\ell])/\Q).
\end{equation}

By invoking Theorem \ref{functorial-quotient-group-pcc}
with 
$K = \Q$,
$L = \Q(A[\ell])$,
$H = \GSp_{2g} (\F_{\ell})$,
$N = \{I_{2g}\}$,
and
${\cal{C}} = {\cal{C}}_0(\ell, t)$, we obtain that, under GRH, AHC, and PCC,
\begin{eqnarray*}
\pi_{\cal{C}_0(\ell, t)}(x, \Q(A[\ell])/\Q) 
&\ll&
\frac{\#\cal{C}_0(\ell, t)}{\#\GSp_{2g}(\F_{\ell})} \pi(x)
\\
&+&
\#\cal{C}_0(\ell, t)^{\frac{1}{2}} 
\
\left(
\frac{
\#\GSp_{2g}(\F_{\ell})^{\#}
}{
\#\GSp_{2g}(\F_{\ell})
} 
\right)^{ \frac{1}{2} }
\
x^{ \frac{1}{2} } 
\ 
\log \left( M(\Q(A[\ell])/\Q) x \right)
\\
&+&
\frac{ 
x^{\frac{1}{2} }
}{
\log x
}
+
\log M(\Q(A[\ell])/\Q).
\end{eqnarray*}
Then, by applying the upper bounds 
for $\#\GSp_{2g}(\F_{\ell})$ from Proposition \ref{counting-G(l)-etc}, 
for $\#{\cal{C}}_0(\ell, t)$ from Proposition \ref{counting-C(l)-etc}, 
for $\#\GSp_{2g}(\F_{\ell})^{\#}$ from Proposition \ref{conj-number},
 as well as 
   estimate (\ref{log-M-A-three}), 
 we deduce that
\begin{eqnarray}\label{upper-bound-C^ss-one-pcc}
\pi_{\cal{C}_0(\ell, t)}(x, \Q(A[\ell])/\Q) 
&\ll_g&
\frac{1}{\ell} \cdot \frac{x}{\log x}
+
\ell^{\frac{g}{2}}
\cdot 
x^{\frac{1}{2}}
\cdot 
\log (\ell N_A x).
\end{eqnarray}
Finally,
by (\ref{direct-mod-ell}) and choosing
\begin{equation*}
\ell(x)
\asymp
\frac{
x^{\frac{1}{ g + 2}}
}{
(\log x)^{\frac{4}{g + 2}} 
},
\end{equation*}
we 
 derive that
\begin{equation*}
\pi_A(x, t) \ll_{A} \frac{x^{1 - \frac{1}{g+2}}}{(\log x)^{1 - \frac{4}{g+2}}}.
\end{equation*}
This completes the proof of Theorem \ref{main-thm2} for arbitrary $t$.

\subsection{Proof of Theorem \ref{main-thm2} for $t = 0$}

Let $A$ be an abelian variety defined over $\Q$, of conductor $N_A$, and of dimension $g$.
Let $x > 2$ be a real number that goes to infinity.
Our goal in this subsection is to prove the upper bound
for $\pi_A(x, 0) = \#\{p \leq x: p \nmid N_A, a_{1, p}(A) = 0\}$
claimed in Theorem \ref{main-thm2}, 
under  assumptions 
(\ref{thm1-GSp}) and (\ref{RH-GRH}), 
together with the assumptions that AHC and PCC hold for the extension $\Q(A[\ell])^{\Lambda(\F_{\ell})}/\Q$,
 where $\ell$ is a sufficiently large arbitrary prime.

As in the proof of Theorem \ref{main-thm2} for arbitrary $t$, our starting point is  inequality (\ref{direct-mod-ell})
for $t = 0$, that is, 
\begin{equation}\label{direct-mod-ell-zero}
\pi_A(x, 0)
\leq
\pi_{{\cal{C}}_0(\ell, 0)}(x, \Q(A[\ell])/\Q).
\end{equation}
We estimate the right hand side by invoking Theorem  \ref{functorial-quotient-group-pcc}
with 
$K = \Q$, 
$L = \Q(A[\ell])$,
$H = \GSp_{2g}(\F_{\ell})$,
$N = \Lambda(\F_{\ell})$,
and
${\cal{C}} = {\cal{C}}_0(\ell, 0)$.
We obtain that, under GRH, AHC, and PCC,
\begin{eqnarray*}
\pi_{\cal{C}_0(\ell, 0)}(x, \Q(A[\ell])/\Q) 
&\ll&
\frac{\# \widehat{\cal{C}_0(\ell, 0)}}{\#\PGSp_{2g}(\F_{\ell})} \pi(x)
\\
&+&
\# \widehat{\cal{C}_0(\ell, 0)}^{\frac{1}{2}} 
\
\left(
\frac{
\#\PGSp_{2g}(\F_{\ell})^{\#}
}{
\#\PGSp_{2g}(\F_{\ell})
} 
\right)^{ \frac{1}{2} }
\
x^{ \frac{1}{2} } 
\ 
\log \left( M(\Q(A[\ell])^{\Lambda(\F_{\ell})}/\Q) x \right)
\\
&+&
\frac{ 
x^{\frac{1}{2} }
}{
\log x
}
+
\log M(\Q(A[\ell])^{\Lambda(\F_{\ell})}/\Q) + \log M(\Q(A[\ell])/\Q).
\end{eqnarray*}
Then, by applying 
the upper bounds for $\#\PGSp_{2g}(\F_{\ell})$ from Proposition \ref{counting-G(l)-etc}, 
for $\#\widehat{{\cal{C}}_0(\ell, 0)}$ from Proposition \ref{counting-C(l)-etc}, 
and for $\#\PGSp_{2g}(\F_{\ell})^{\#}$ from Proposition \ref{conj-number}, 
and 
by observing that
\[
\log M(\Q(A[\ell])^{\Lambda(\F_{\ell})}/\Q)\leq \log M(\Q(A[\ell])/\Q),
\]
 we deduce that
\begin{eqnarray}\label{upper-bound-C^ss-one-pcc}
\pi_{\cal{C}_0(\ell, 0)}(x, \Q(A[\ell])/\Q) 
&\ll_g&
\frac{1}{\ell} \cdot \frac{x}{\log x}
+
\ell^{\frac{g-1}{2}}
\cdot 
x^{\frac{1}{2}}
\cdot 
\log (\ell N_A x).
\end{eqnarray}
Finally, by (\ref{direct-mod-ell-zero}) and choosing 
\begin{equation*}\label{choice-y-thm1-pcc}
\ell(x) 
\asymp
\frac{
x^{\frac{1}{ g + 1}}
}{
(\log x)^{\frac{4}{g + 1}} 
},
\end{equation*}
we  derive that
\begin{equation*}
\pi_A(x, 0) \ll_{A} \frac{x^{1 - \frac{1}{g+1}}}{(\log x)^{1 - \frac{4}{g+1}}}.
\end{equation*}
This completes the proof of Theorem \ref{main-thm2} for $t = 0$.

\subsection{Proof of part (i) of Theorem \ref{main-thm3}}

Once again, let $A$ be an abelian variety defined over $\Q$, of conductor $N_A$, and of dimension $g$.
We  keep the two main assumptions (\ref{thm1-GSp}) and (\ref{RH-GRH})  
as in 
Subsections 6.3  and 6.4. 
Our goal in this subsection is to prove that,
for any $\varepsilon > 0$,
the lower bound
$|a_{1, p}(A)| \geq p^{\frac{1}{2g^2+g+1}-\varepsilon}$
holds for a set of primes $p$ of density 1.
The proof proceeds similarly to that of \cite[Theorem 3]{CoWa22}, as follows.

Let $x > 2$ be a real number that goes to infinity.
Let $z = z(x) > 0$, $y = y(x) > 2$, and $u = u(x) > 2$ 
be real numbers that depend on and grow with $x$ 
and  satisfy
(\ref{u-less-y}),
(\ref{u-greater-log-y}),
and
(\ref{log-x-over-log-y}).
By (\ref{max-lemma-bound-second}) of Proposition \ref{max-lemma-A-generic},
we know that, for an arbitrary fixed $\varepsilon > 0$, there exists $c(\varepsilon) > 0$ such that
\begin{eqnarray*}\label{max-lemma-bound-second-proof}
\ds\sum_{t \in \Z \atop{|t| \leq z}}
\pi_A(x, t)
&\leq&
c(\varepsilon)
\ds\max_{
y \leq \ell \leq y + u
} 
\ds\sum_{t \in \Z \atop{|t| \leq z}}
\#\left\{p \leq x: p \nmid \ell N_A, a_{1, p}(A) = t,
\right.
\nonumber
\\
&&
\hspace*{4.3cm}
\left.
\ell \nmid d_{\Q(\pi_p(A))} \# ({\cal{O}}_{\Q(\pi_p(A))}/\Z[\pi_p(A)]),
\ \ell \ \text{splits completely in} \  \Q(\pi_p(A))
\right\}.
\end{eqnarray*}
Proceeding similarly to the proofs of (\ref{max-lemma-bound-first-proof-cheb}) and (\ref{upper-bound-C^ss-two}), 
we obtain that
\begin{eqnarray*}
\ds\sum_{t \in \Z \atop{|t| \leq z}}
\pi_A(x, t)
&\leq&
c(\varepsilon)
\ds\max_{
y \leq \ell \leq y + u
} 
\pi_{ {\cal{C}}^{ss}(\ell, |t| \leq z)}(x, \Q(A[\ell])/\Q)
\nonumber
\\
&\leq&
c(A, \varepsilon)
\ds\max_{
y \leq \ell \leq y + u
} 
\left(
\frac{1}{\ell} \cdot \frac{x z}{\log x}
+
\ell^{g^2 + \frac{g}{2}}
\cdot
\frac{
x^{\frac{1}{2}}
z^{\frac{1}{2}}
}{\log x}
\cdot
\log \ell
\right)
\end{eqnarray*}
for some positive constant $c(A, \varepsilon)$ that depends on $A$ and $\varepsilon$.

Now we choose the parameters. Let  $u=y^{\frac{1}{2}}(\log y)^{2+\varepsilon}$ and note that 
$$\max_{
y \leq \ell \leq y + u
} 
\left(
\frac{1}{\ell} \cdot \frac{x z}{\log x}
+
\ell^{g^2 + \frac{g}{2}}
\cdot
\frac{
x^{\frac{1}{2}}
z^{\frac{1}{2}}
}{\log x}
\cdot
\log \ell
\right)\ll 
\frac{1}{y} \cdot \frac{x z}{\log x}
+
y^{g^2 + \frac{g}{2}}
\cdot
\frac{
x^{\frac{1}{2}}
z^{\frac{1}{2}}
}{\log x}
\cdot
\log y.
$$

Next, we set
\[
y=y(z, x)=\frac{(xz)^{\frac{1}{2g^2+g+2}}}{(\log (xz))^{\frac{2}{2g^2+g+2}}}
\]
 so that the two terms in the last displayed equation are asymptotic  of the same order.
 Observe that 
assumptions  (\ref{u-less-y}), (\ref{u-greater-log-y}), and (\ref{log-x-over-log-y}) of  Proposition \ref{max-lemma-A-generic} are now satisfied.  
Finally, we choose
$$z(x) := 
\frac{
x^{\frac{1}{2 g^2 + g + 1}}
}{
(\log x)^{\frac{2}{2g^2+g+1}+ \varepsilon(1+\frac{1}{2g^2+g+1})}}
$$
so that
\begin{equation*}\label{bound-for-sum}
  \ds\sum_{
 t \in \Z
 \atop{
 |t| 
 \leq 
 z(x)
 }
 }
 \pi_A(x, t)
=
\o(\pi(x)).
 \end{equation*} 

Using this estimate, we deduce that for any $\varepsilon>0$, 
\begin{eqnarray*}
\pi(x)
&=&
\#\left\{
p \leq x:
p \nmid N_A,
|a_{1, p}(A)| > \frac{ p^{\frac{1}{2 g^2 + g + 1}}  }{ (\log p)^{\frac{2}{2g^2+g+1}+ \varepsilon}   }
\right\}
\\
&+&
\#\left\{
p \leq x:
p \nmid N_A,
|a_{1, p}(A)| \leq \frac{ p^{\frac{1}{2 g^2 + g + 1}}  }{ (\log p)^{\frac{2}{2g^2+g+1}+ \varepsilon}   }
\right\}
+
\#\{p \leq x: p \mid N_A\}
\\
&=&
\#\left\{
p \leq x:
p \nmid N_A,
|a_{1, p}(A)| > \frac{ p^{\frac{1}{2 g^2 + g + 1}}  }{ (\log p)^{\frac{2}{2g^2+g+1}+ \varepsilon}   }
\right\}
+
\o(\pi(x)).
\end{eqnarray*} 
This completes the proof of part (i) of Theorem \ref{main-thm3}.

\subsection{Proof of part (ii) of Theorem \ref{main-thm3}}
Yet again, let $A$ be an abelian variety defined over $\Q$, of conductor $N_A$, and of dimension $g$.
We  keep the four main assumptions (\ref{thm1-GSp}), (\ref{RH-GRH}), AHC, and PCC,
 as in
Subsections 6.5  and 6.6. 
Our goal in this subsection is to prove that,
for any $\varepsilon > 0$,
the lower bound
$|a_{1, p}(A)| \geq p^{\frac{1}{g+2}-\varepsilon}$
holds for a set of primes $p$ of density 1.
The proof proceeds similarly to that of part (i), as follows.

Let $x > 2$ be a real number that goes to infinity
and let $\varepsilon > 0$.
Using Theorem \ref{main-thm2} , we  deduce that 
\begin{eqnarray*}
\pi(x)
&=&
\#\left\{
p \leq x:
p \nmid N_A,
|a_{1, p}(A)| >p^{\frac{1}{g + 2}-\varepsilon} 
\right\}
\\
&+&
\#\left\{
p \leq x:
p \nmid N_A,
|a_{1, p}(A)| \leq p^{\frac{1}{ g + 2}-\varepsilon} 
\right\}
+
\#\{p \leq x: p \mid N_A\}
\\
&\leq &
\#\left\{
p \leq x:
p \nmid N_A,
|a_{1, p}(A)| >  p^{\frac{1}{ g + 2}-\varepsilon}  
\right\}\\
&+&
\#\sum_{\substack{t\in \Z\\|t|\leq x^{\frac{1}{ g + 2}-\varepsilon}}}\left\{
p \leq x:
p \nmid N_A,
|a_{1, p}(A)| =t
\right\}
+
\o(\pi(x))
\\
&=&
\#\left\{
p \leq x:
p \nmid N_A,
|a_{1, p}(A)| >p^{\frac{1}{g + 2}-\varepsilon} 
\right\}+
 \O_{A, \varepsilon}\left(x^{\frac{1}{ g + 2}-\varepsilon} \cdot \frac{x^{1-\frac{1}{g+2}}}{(\log x)^{1-\frac{4}{g+2}}} \right)\\
&=&
\#\left\{
p \leq x:
p \nmid N_A,
|a_{1, p}(A)| >p^{\frac{1}{g + 2}-\varepsilon} 
\right\}+o(\pi(x)).
\end{eqnarray*} 
This completes the proof of  part (ii) of Theorem \ref{main-thm3}.

\section{Final remarks}

We conclude with brief remarks about the current conditional approaches towards
obtaining  upper bounds for 
$\pi_A(x, t)$ for a given generic abelian variety $A$ defined over $\Q$ and
of dimension $g$, 
where by generic we mean that $\Im \rho_A$ is open in $\GSp_{2g}(\hat{\Z})$.

The main goal of the present paper is
to prove the currently best upper bound for $\pi_A(x, t)$
under GRH.
Theorem \ref{main-thm1} does provide such bounds, substantially improving upon 
\cite[Theorem 1, pp. 3560-3561]{CoDaSiSt17} for any value of $g$ and any value of $t$.
The secondary goal of the paper  is
to prove the currently best upper bound for $\pi_A(x, t)$
under other sensible unproven hypotheses. 
Theorem \ref{main-thm2} does provide such bounds, substantially improving upon
\cite[Corollaire 17, p. 51]{Be16}.
However, while in the first comparison, both results assume some version of GRH,
in the second comparison, the two results assume different hypotheses.
Indeed, Theorem \ref{main-thm2} assumes GRH, AHC, and PCC,
in contrast with \cite[Corollaire 17, p. 51]{Be16}, which assumes GRH and AHC.
It is plausible that the methods of these two results may be combined to produce an improvement to 
the upper bounds for $\pi_A(x, t)$
 under GRH, AHC, and PCC.
 We relegate such work to a future project.

In the case $g = 1$,
a different conditional approach for proving upper bounds for  $\pi_A(x, t)$ was initiated in \cite{Mu85}.
Therein, 
Murty assumed 
an effective version of the Sato-Tate Conjecture for $A$
and used it to deduce the upper bound $\pi_A(x, t) \ll_{A, t} x^{1 - \frac{1}{4}} (\log x)^{\frac{1}{2}}$.
He also proved that the  effective version of the Sato-Tate Conjecture for $A$ invoked in the above bound
follows from the assumptions that 
 each of the symmetric power L-functions of $A$
 has  an analytic continuation to $\C$,
 satisfies   an appropriate functional equation,
 and satisfies an analogue of  the Riemann Hypothesis.
Thanks to recent work of Newton and Thorne \cite{NeTh21},
the assumption about the analytic continuation  is now known to hold.
Murty's approach was exploited further by Rouse and Thorne \cite{RoTh16}, 
who proved, 
under similar hypotheses as those of Murty's,
together
with the additional assumption  that the conductor $N_A$ of the elliptic curve $A$ is squarefree,
 that
$\pi_A(x, t) \ll_{A, t} \frac{x^{1 - \frac{1}{4}} }{(\log x)^{\frac{1}{2}}}$,
a bound which 
improves on that of \cite{Mu85}, but  not on that of \cite{MuMuWo18}.
In the case $g \geq 2$,
Bucur, Fit\'{e} and Kedlaya  \cite{BuFiKe20} proved 
an analogue of Murty's result concerning the validity of an 
 effective version of the Sato-Tate Conjecture for $A$,
 under assumptions similar to the ones of \cite{Mu85}, together with the additional  assumption that
  the Mumford-Tate Conjecture holds  for the abelian variety $A$.
  Following the strategy of \cite{Mu85},  the result of \cite{BuFiKe20} seems to give
 $\pi_A(x, t) \ll_{A, t} x^{1 - \frac{1}{2 g^2 + 2g}} (\log x)^c$ for some (possibly  non-negative) constant $c$, 
  a bound which does not improve upon Theorem \ref{main-thm1}.
  We plan to explore this approach in depth in future work.
  
  Finally, let us note that similar methods may be used to obtain upper bounds for $\pi_A(x, t)$ for other types
  of abelian varieties $A$. For example,
  in the case of an abelian variety isogenous to the product  $E_1 \times \ldots \times E_g$
  of elliptic curves $E_1, \ldots, E_g$ defined over $\Q$, pairwise non-isogenous over $\overline{\Q}$,
  and each with a trivial $\overline{\Q}$-endomorphism ring,
   the refined mod $\ell$ method of \cite{MuMuSa88}, under GRH, was used in \cite{CoWa22},
   while the direct mod $\ell$ method of \cite{MuMuWo18}, under GRH, AHC, and PCC,
   as well as the Sato-Tate method of \cite{Mu85}, under assumptions similar to the case $g =1$,
   will be used in an upcoming paper of the authors.

\bigskip

{\small{

}


\begin{thebibliography}{FrobTraceGeneric}

%
\bibitem[AkPa19]{AkPa19}
A. Akbari and J. Park,
{\emph{
On the Lang-Trotter for two elliptic curves,
}}
Ramanujan Journal 49, 2019, pp. 585--623.
%
\bibitem[Al09]{Al09}
P. Aluffi,
{\emph{
Algebra: Chapter 0,
}}
Graduate Studies in Mathematics Vol. 104,
American Mathematical Society, 
2009.
%
\bibitem[Ar27]{Ar27}
E. Artin,
{\emph{Beweis des allgemeinen Reziprozit\"{a}tsgesetzes,}}
Hamb. Abh. 5, 1927, pp. 353--363.
%
%
\bibitem[BaGo97]{BaGo97}
P. Bayer and J. Gonz\'{a}lez,
{\emph{On the Hasse-Witt invariants of modular curves,
}}
Experimental Mathematics 6, 1997, No. 1, pp. 57--76.
\bibitem[Be16]{Be16}
J. Bella\"{i}che,
{\emph{Th\'{e}or\`{e}me de Chebotarev et complexit\'{e} de Littlewood,}}
Ann. Sci. \'{E}c. Norm. Sup\'{e}r. (4) 49, No. 3, 2016, pp. 579--632.
%
\bibitem[BoCaGePi21]{BoCaGePi21}
G. Boxer, F. Calegari, T. Gee, and V. Pilloni,
{\emph{
Abelian surfaces over totally real fields are potentially modular,
}}
Publ. Math. de l'IHES 134, 2021, pp. 153--501.
%
%
\bibitem[BuFiKe20]{BuFiKe20}
A. Bucur, F. Fit\'{e}, and K. Kedlaya,
{\emph{
Effective Sato-Tate conjecture for abelian varieties and applications,
}}
J. Eur. Math. Soc. 26,  No. 5, 2024,  pp. 1713--1746.
available at
https://doi.org/10.48550/arXiv.2002.08807
%
\bibitem[Ch97]{Ch97}
N. Chavdarov,
{\emph{
The generic irreducibility of the numerator of the zeta function in a family of curves with large monodromy,
}}
Duke Mathematical Journal 87, No. 1, 1997, pp. 151--180.
%
\bibitem[ChJoSe20]{ChJoSe20}
H. Chen, N. Jones, and V. Serban,
{\emph{The Lang-Trotter conjecture for products of non-CM elliptic curves,}}
to appear in Ramanujan Journal, 
preprint available at
https://arxiv.org/abs/2006.11269
%
\bibitem[CoDaSiSt17]{CoDaSiSt17}
A.C. Cojocaru, R. Davis, A. Silverberg, and K.E. Stange,
{\emph{Arithmetic properties of the Frobenius traces defined by a rational abelian variety (with two appendices by J-P. Serre),}}
International Mathematics Research Notices 12, 2017, pp. 3557--3602.
%
\bibitem[CoWa22]{CoWa22}
A.C. Cojocaru and T. Wang,
{\emph{
Bounds for the distribution of the Frobenius traces associated to products of non-CM elliptic curves,
}}
Canadian Journal of Mathematics 2022, pp. 1--26,
available at
http://dx.doi.org/10.4153/S0008414X22000086
%
\bibitem[CoSi86]{CoSi86}
G. Cornell and J.H. Silverman (editors),
{\emph{
Arithemtic Geometry,
}}
Springer-Verlag New York, 1986.
%
\bibitem[Do84]{Do84}
B. Dodson,
{\emph{
The structure of Galois groups of CM-fields,
}}
 Trans. Amer. Math. Soc.  283, no.~1, 1984,  pp. 1--32.
%
\bibitem[El91]{El91}
N. Elkies, 
{\emph{Distribution of supersingular primes,}} 
Ast\'{e}risque No. 198-200, 1991, pp. 127--132.
%
\bibitem[Fa83]{Fa83}
G. Faltings,
{\emph{
Endlichkeitss\"{a}tze f\"{u}r abelsche Variet\"{a}ten \"{u}ber Zahlk\"{o}rpern,}}
Inventiones Math. 73, 1983, pp. 349--366.
%
\bibitem[FoMu96]{FoMu96}
E. Fouvry, M. R.  Murty, 
{\emph{On the distribution of supersingular primes,}} 
Canad. J. Math. 48, No. 1, 1996,  pp.  81--104. 
%
\bibitem[FuGu12]{FuGu12}
J. Fulman and  R. Guralnick, 
{\emph{Bounds on the number and sizes of conjugacy classes in finite Chevalley groups with applications to derangements,}}
Trans. Amer. Math. Soc. 364, No. 6,  2012, pp. 3023--3070.
%
\bibitem[Ga70]{Ga70}
P.X. Gallagher,
{\emph{The number of conjugacy classes in a finite group,}}
Math. Z. 118, 1970, pp. 175--179.
%
\bibitem[Ha11]{Ha11}
C. Hall,
{\emph{
An open image theorem for a general class of abelian varieties, 
}}
Bulletin of the London Mathematical Society 43, No. 4, 2011, pp. 703--711.
%
 \bibitem[He02]{He02}
 K. Hensel,
 {\emph{\"{U}ber die Entwicklung der algebraischen Zahlen in Potenzreihen,}}
 Mathematische Annalen 55, 1902,  pp. 301--336.
%
\bibitem[Ho68]{Ho68}
T. Honda,
{\emph{Isogeny classes of abelian varieties over finite fields,}}
Journal of the Mathematical Society of Japan 20, 1968, pp. 83--95.
%
\bibitem[Hu75]{Hu75}
J.E. Humphreys,
{\emph{Linear algebraic groups,}} 
Graduate Texts in Mathematics 21, Springer-Verlag, New York - Heidelberg, 1975.
%
 \bibitem[Ka09]{Ka09}
 N.M. Katz,
 {\emph{Lang-Trotter revisited,}}
 Bulletin of the  American Mathematical Society 46, No. 3, 2009, pp. 413--457.
 %
 \bibitem[KaSa99]{KaSa99}
 N.M. Katz and P. Sarnak,
 {\emph{
 Random matrices, Frobenius eigenvalues, and monodromy,
 }}
 American Mathematical Society Colloquium Publications 45,
 Providence, RI,
 1999.
%
\bibitem[KuKuWe22]{KuKuWe22}
A. Kumar, M. Kumari, and A. Weiss,
{\emph{On the Lang-Trotter conjecture for Siegel modular forms,}}
preprint available at https://arxiv.org/abs/2201.09278
%
\bibitem[LaOd77]{LaOd77}
 J. Lagarias and A. Odlyzko, 
 {\emph{Effective versions of the Chebotarev density theorem,}}
  in:  A. Fr\"{o}hlich (Ed.), Algebraic Number Fields, Academic Press, New York, 1977, pp. 409--464.
%
\bibitem[La83]{La83}
S. Lang,
{\emph{Abelian varieties,}}
Springer - Verlag, New York - Berlin, 1983. 
%
\bibitem[La94]{La94}
S. Lang,
{\emph{Algebraic number theory,}}
2nd edition,
Graduate Texts in Mathematics 110,
Springer - Verlag, New York - Berlin - Heidelberg, 1994.
%
\bibitem[LaTr76]{LaTr76} 
S. Lang and H. Trotter, 
{\emph{Frobenius distributions in $\GL_2$-extensions,}} 
Lecture Notes in Mathematics 504, Springer Verlag,  Berlin - New York, 1976.
%
\bibitem[Mi86]{Mi86}
J.S. Milne,  
{\emph{Abelian varieties,}}
in
{\emph{Arithmetic geometry,}}
 by G. Cornell and J.H. Silverman (editors), Springer-Verlag New York, 1986,
 pp. 103--150.
%
\bibitem[Mu70]{Mu70}
D. Mumford,
{\emph{Abelian varieties,}}
Oxford University Press, 1970. 
%
\bibitem[Mu85]{Mu85}
V.K. Murty,
{\emph{
Explicit formulae for the Lang-Trotter conjecture,
}}
Rocky Mountain Journal of Mathematics 15, No. 2, 1985, pp. 535--551.
%
\bibitem[MuMu84]{MuMu84}
M.R. Murty and V.K. Murty,
{\emph{
Prime divisors of Fourier coefficients of modular forms,
}}
Duke Mathematical Journal 51, No. 1, 1984, pp. 57--76.
%
\bibitem[MuMuSa88]{MuMuSa88} 
M.R. Murty,  V.K. Murty, and N. Saradha, 
{\emph{Modular forms and the Chebotarev density theorem,}} 
Amer. J. Math.  110, No. 2, 1988, pp. 253--281.
%
\bibitem[MuMuWo18]{MuMuWo18}
M.R. Murty, V.K. Murty, and P-J. Wong,
{\emph{The Chebotarev density theorem and the pair correlation conjecture,}}
J. Ramanujan Math. Soc. 33, No. 4, 2018, pp. 399--426.
%
\bibitem[Mu97]{Mu97} 
V.K. Murty,
{\emph{Modular forms and the Chebotarev density theorem II,}}
Analytic number theory (Kyoto, 1996),
London Math. Soc. Lecture Notes Series 247, Cambridge University Press, 1997, pp. 287--308.
%
\bibitem[Mu99]{Mu99}
V.K. Murty,
{\emph{Frobenius distributions and Galois representations,}}
Automorphic forms, automorphic representations, and arithmetic (Fort Worth, TX, 1996), 
Proc. Sympos. Pure Math. 66, Part 1, American Mathematical Society, Providence, 1999, pp. 193--211.
%
\bibitem[NeTh21]{NeTh21}
J. Newton and J. Thorne,
{\emph{Symmetric power functoriality for holomorphic modular forms, II,}}
Publ. Math. Inst. Hautes \'{E}tudes Sci. 134, 2021,  pp. 117--152.
%
\bibitem[O'Me78]{O'Me78}
O.T. O'Meara,
{\emph{Symplectic groups,}}
Mathematical Surveys 16, American Mathematical Society, Providence, 1978.
%
\bibitem[Oo08]{Oo08}
F. Oort,
{\emph{Abelian varieties over finite fields. Higher-dimensional geometry over finite fields,}}
NATO Sci. Peace Scur. Ser. D Inf. Commun. Secur., 16, IOS, Amsterdam, 2008, pp. 123--188.
%
\bibitem[RoTh16]{RoTh16}
J. Rouse and J. Thorner,
{\emph{The explicit Sato-Tate conjecture and densities pertaining to Lehmer-type questions,}} 
Trans. Amer. Math. Soc. 369, No 5, 2017, pp. 3575--3604.
%
\bibitem[Se68]{Se68}
J-P. Serre,
{\emph{Abelian $\ell$-adic representations and elliptic curves,
}}
New York - Amsterdam, 1968.
%
\bibitem[Se72]{Se72} 
J-P. Serre, 
{\emph{Propri\'{e}t\'{e}s galoisiennes des points d'ordre fini des courbes elliptiques,}}
Inventiones Math. 15, No. 4, 1972,  pp. 259--331.
%
\bibitem[Se76]{Se76}
J-P. Serre,
{\emph{
Divisibilit\'{e} de certaines fonctions arithm\'{e}tiques,
}}
L'Ens. Math. 22, 1976, pp. 227--260.
%
\bibitem[Se81]{Se81} 
J-P. Serre, 
{\emph{Quelques applications du th\'eor\`eme de densit\'e de Chebotarev,}} 
Publ. Math. I. H. E. S., No. 54, 1981, pp. 123--201.
%
\bibitem[Se85]{Se85}
J-P. Serre,
{\emph{
R\'{e}sum\'{e} des cours de 1985-1986,
 }}
 in {\it{Oeuvres. Collected Papers, Vol. IV, 1985-1998,}} 2nd edition,
  Springer Verlag, Heidelberg, 2003, 
 pp. 33--37.
%
\bibitem[Se86]{Se86}
J-P. Serre,
{\emph{
Lettre \`{a} Marie-France Vign\'{e}ras du 10/2/1986,   
}}
in  {\it{Oeuvres. Collected Papers, Vol. IV, 1985-1998,}} 2nd edition,
  Springer Verlag, Heidelberg, 2003, 
 pp. 38--55.
%
\bibitem[Se94]{Se94} 
J-P. Serre,
{\emph{
Propri\'{e}t\'{e}s conjecturales des groupes de Galois motiviques et des repr\'{e}sentations
$\ell$-adiques,
}}
in Motives (Seattle, WA, 1991),
Proceedings Symposium Pure Mathematics 55, part I, 
Providence, RI,
American Mathematical Society, 1994,
 pp. 377--400.
%
\bibitem[SeTa68]{SeTa68}
J-P. Serre and J. Tate,
{\emph{Good reduction of abelian varieties,}}
Annals of Mathematics 88, 1968, pp. 492--517;
Oeuvres/Collected Papers OO, Springer Verlag, Berlin, 1985, pp. 472--497.
%
\bibitem[Ta66]{Ta66}
J. Tate,  
{\emph{Endomorphisms of abelian varieties over finite fields,}}
Invent. Math. 2, 1966, pp. 134--144.
%
\bibitem[Wa63]{Wa63}
G.E. Wall, 
{\emph{On the conjugacy classes in the unitary, symplectic and orthogonal groups,}}
 J. Austral. Math. Soc. 3, 1963, pp. 1--62.
 %
\bibitem[Wa69]{Wa69}
W.C. Waterhouse,
{\emph{Abelian varieties over finite fields,}}
Ann. Sc. Ec. Norm. Sup. 2, 1969, pp. 521--560.
%
\bibitem[Za00]{Za00}
Y.G.  Zarhin,
{\emph{
Hyperelliptic Jacobians without complex multiplication,
}}
Mathematical Research Letters 7, No.1, 2000, pp. 123--132.
%
\bibitem[Za18]{Za18}
Y.G. Zarhin, 
{\emph{G. Endomorphism rings of reductions of elliptic curves and Abelian varieties,}}  
St. Petersburg Math. J. 29, No. 1, 2018, pp. 81--106.
%
\bibitem[Zy15]{Zy15}
D. Zywina,
{\emph{Bounds for the Lang-Trotter Conjectures,}}
in {\emph{SCHOLAR -- a scientific celebration highlighting open lines of arithmetic research,}}
Contemporary Mathematics 655, American Mathematical Society, Providence, 2015, pp. 235--256.
%
\end{thebibliography}
\end{document}